\pdfoutput=1
\documentclass[11pt]{article}

\usepackage[a4paper, margin=1in]{geometry}
\usepackage[utf8]{inputenc}
\usepackage{amsmath}
\usepackage{amssymb}
\usepackage{amsthm}
\usepackage{graphicx}
\usepackage{authblk}
\usepackage{hyperref}
\hypersetup{
    colorlinks=true,
    linkcolor=black,
    urlcolor=blue,
    citecolor=black
}

\newtheorem{theorem}{Theorem}[section]
\newtheorem{proposition}{Proposition}[section]
\newtheorem{observation}{Observation}[section]

\theoremstyle{remark}
\newtheorem*{remarks}{Remarks}

\newcommand{\Atgl}{A_{\mathrm{tgl}}^{\mathrm{dual}}}

\title{A Constructive Framework for Generalized Fourier Transforms via Truncate-and-Generalized Limits}

\author{Yoshihiko Akaiwa}
\affil{Emeritus professor of Kyushu University \\
1-21-2 Munakata-city, Fukuoka, Japan 811-3425 \\
\texttt{yakaiwa@gmail.com}}

\date{}

\begin{document}

\maketitle

\begin{abstract}
This paper introduces a constructive definition of generalized Fourier transforms based entirely on ordinary truncated Fourier integrals and ordered dual-domain limits, within the framework of improper Riemann integration and classical analysis. The proposed truncate-and-generalized-limit (t.g.l.) formulation does not require test-function spaces, Lebesgue measure theory, or duality pairings in its proofs: the forward and inverse transforms are defined directly through finite-domain truncation of the target function, followed by successive ordered limits in the time and frequency domains.

As consequences of this constructive definition, the formulation provides a unified treatment of non-decaying, oscillatory, and locally singular functions beyond the classical $L^1(\mathbb{R})$ setting; reveals an inherent asymmetry between the forward transform, interpreted as a first-order generalized-limit family, and the inverse transform, which requires frequency-domain truncation to generate pointwise reconstruction through Dirichlet-type oscillatory localisation; shows that this asymmetry, and the order-dependence of the two limits it entails, is immaterial for $L^1(\mathbb{R})$ functions but becomes analytically essential outside this classical setting; and clarifies the distinction between the t.g.l.\ approach and distribution theory, where generalized Fourier transforms are introduced through duality pairings rather than constructed from ordinary integrals. The inversion formula is established rigorously for two concrete admissible classes using only the classical Dirichlet convergence theorem. Several examples confirm that the framework covers constants, polynomials, periodic functions, singular kernels, and chirp signals within a single constructive scheme.
\end{abstract}

\noindent \textbf{Keywords:} Fourier transform; generalized Fourier analysis; improper Riemann integrals; distribution theory; signal truncation; chirp signals; asymptotic reconstruction.

\noindent \textbf{MSC 2020:} 42A38, 42A20, 42A16, 44A35

\section{Introduction}

The Fourier transform is one of the fundamental tools in analysis, with wide applications in mathematics, physics, and engineering. In its classical form, it is defined for functions $f \in L^1(\mathbb{R})$ by
\begin{equation}
F(\omega) = \int_{-\infty}^{\infty} f(t)\, e^{-i\omega t}\,dt, \label{eq:1}
\end{equation}
\begin{equation}
f(t) = \frac{1}{2\pi}\int_{-\infty}^{\infty} F(\omega)\, e^{i\omega t}\,d\omega. \label{eq:2}
\end{equation}
These relations are referred to as the Fourier transform and the inverse Fourier transform, respectively, and are often written symbolically as $f(t) \leftrightarrow F(\omega)$. To emphasize the inverse Fourier transform, in this paper we sometimes write $\overleftarrow{f}(t)$ instead of $f(t)$.

Since the two definitions depend on each other, an essential task is to establish the consistency of the inversion formula, namely that applying the inverse transform to $F$ recovers the original function $f$ as:
\[
\overleftarrow{f}(t) = \frac{1}{2\pi}\int_{-\infty}^{\infty}\left[\int_{-\infty}^{\infty} f(x)\, e^{-i\omega x}\,dx\right] e^{i\omega t}\,d\omega = f(t).
\]

Because the integrations extend over infinite intervals in both the time and frequency domains, they must be interpreted through limiting procedures. In modern treatments this interpretation is usually formulated within the Lebesgue integral framework, whereas earlier classical treatments relied on improper Riemann integrals defined as limits of integrals over finite intervals.

If the Fourier transform pair is interpreted in the Lebesgue sense, then for functions $f \in L^1(\mathbb{R})$, the Fourier transform exists as:
\[
|F(\omega)| = \left|\int_{-\infty}^{\infty} f(t)\, e^{-i\omega t}\,dt\right| \leq \int_{-\infty}^{\infty} |f(t)|\,dt < \infty.
\]
Similarly if $F \in L^1(\mathbb{R})$, the inverse transform exists. These conditions are sufficient in the Lebesgue framework but are rare to be satisfied and not necessary when the integrals are interpreted in the improper Riemann sense through localization limits.

Many functions of practical and theoretical importance, such as constants, polynomials, periodic functions, and singular functions like $1/t$, do not belong to $L^1(\mathbb{R})$, and the classical formulation becomes insufficient. Nevertheless, such functions still admit meaningful Fourier representations.

Historically, the interpretation of the integrals appearing in Eqs.~(\ref{eq:1}) and (\ref{eq:2}) has changed together with the development of integration theory. The earliest spectral representations appeared in Fourier series, where functions defined on finite intervals were expanded in trigonometric bases and the coefficients were obtained by ordinary integrals over finite domains. In this setting convergence of the expansion was achieved through limiting processes applied outside the summation procedure.

The transition from Fourier series to Fourier integrals replaced discrete spectral variables by continuous frequency variables and led formally to expressions of the form (\ref{eq:1}) and (\ref{eq:2}) (see Appendix~A). In the nineteenth century these expressions were interpreted through improper integrals defined as limits of truncated integrals over finite intervals. Dirichlet and Riemann [1,2] established convergence results showing that Fourier inversion could be justified through localization effects associated with truncated integration domains. The convergence of Fourier integrals and improper integral formulations was systematically analyzed by Titchmarsh [3].

Another classical approach [4] for extending the Fourier transform formula beyond the $L^1$ framework is the introduction of damping factors such as $\exp(-\sigma|t|)$, which ensure absolute integrability for non-decaying functions. The generalized transform is then recovered in the limit $\sigma\to 0$. However, locally singular functions such as $1/t$ are not regularized by multiplication with such damping factors and require separate treatment.

Later developments in integration theory provided new foundations for Fourier transforms. With the introduction of Lebesgue integration the transform (\ref{eq:1}) was defined for functions belonging to $L^1(\mathbb{R})$, and inversion formulas were established under integrability assumptions on both the function and its transform as described previously. Subsequently, Plancherel and Riesz [5,6] extended the Fourier transform to the Hilbert space $L^2(\mathbb{R})$, where the transform is interpreted as a unitary operator and inversion holds in the mean-square sense even when absolute integrability fails.

A further extension was obtained in the theory of generalized functions developed by Schwartz [7]. In this framework generalized functions are defined as continuous linear functionals acting on spaces of rapidly decreasing test functions, and the Fourier transform is introduced through the operator identity
\[
\langle \mathcal{F}\phi, \psi \rangle = \langle \phi, \mathcal{F}\psi \rangle.
\]
Thus ordinary integrals are replaced by duality pairings between generalized functions and test functions, allowing the treatment of impulse responses, Green functions, and singular kernels that do not admit classical integral representations. Closely related extensions were developed by Gel'fand and Shilov [8], who introduced spaces of generalized functions characterized by growth and regularity conditions and studied the Fourier transform as a continuous linear operator acting on these spaces.

Following the distributional framework, Lighthill [9] proposed a sequential approach using sequences of ``good functions'' within the Schwartz class, later reformulated by Feichtinger [10]. Feichtinger's algebra $S_0(\mathbb{R})$ is a Fourier-invariant Banach test-function space containing the Schwartz space $\mathcal{S}(\mathbb{R})$ continuously and densely; its continuous dual $S_0'(\mathbb{R})$, the space of mild distributions, embeds continuously into the space $\mathcal{S}'(\mathbb{R})$ of tempered distributions. The value of $S_0(\mathbb{R})$ lies in its operational convenience: translations, modulations, and the Fourier transform are all bounded on $S_0$, and Banach-space tools apply directly. In both sequential approaches, auxiliary decaying and smoothing functions are introduced: the smoothing or ``smudge'' functions are the distinguishing feature, intentionally introduced to bring the regularized functions into the required test-function class. The t.g.l.\ formulation requires neither damping factors nor smoothing functions; ordinary sharp truncation suffices, and generalised Fourier meaning is recovered through ordered truncation limits. Moreover, unlike all three distributional and sequential approaches in which test functions play a constitutive role in defining the generalised Fourier transform, no test functions appear in the t.g.l.\ definitions --- a distinction examined in detail in Section~6.

Another important difficulty in Fourier-transform theory arises in the treatment of locally singular functions such as
\[
\frac{1}{t^{n}} \qquad (n=1,2,\dots)
\]
which occur naturally in singular integral operators, Hilbert transforms, and Green-function representations of differential equations. Historically such expressions for $n=1$ were interpreted through symmetric truncations of integration intervals leading to the Cauchy principal value (p.v.) integral.

The Cauchy p.v.\ integral may be understood within the framework of distribution theory, since it does not regularize the singular function itself but instead modifies the definition of the integral through a symmetric limiting process. In this sense, the singularity is incorporated into the definition of the functional via a prescribed mode of convergence.

For $n>1$, the finite-part (f.p.) method is adopted, which extracts a finite value by subtracting asymptotic divergences [11]. Both approaches rely on compensatory mechanisms that balance or remove divergent terms.

Later these singular kernels were incorporated into the theory of distributions, where they are treated as generalized functions defined through their action on test functions with the p.v.\ or the f.p.\ concept. For $n>1$, distributional Fourier transform may be defined by the f.p.\ integral or using the definition of
\[
\frac{1}{t^{n}} = \frac{1}{(-1)^{n}\,n!}\,\frac{d^{n}}{dt^{n}}\,\mathrm{p.v.}\frac{1}{t}
\]
and distributional differentiations with test functions. These developments played an essential role in harmonic analysis and in the theory of singular integral operators. This historical development reveals a recurring need to assign mathematically meaningful values to ordinary Fourier integrals that fail to converge because of non-decaying behavior at infinity or local singularities.

The Fourier transform beyond the classical $L^1(\mathbb{R})$ framework has been successfully developed within the theory of tempered distributions, where generalized Fourier transforms are defined through continuous duality pairings on spaces of test functions. In this framework, generalized functions are introduced as continuous linear functionals, while particular singular objects are constructed by appropriate limiting procedures such as principal-value integrals, Hadamard finite-part regularization, or regularizing sequences. Thus, generalized Fourier analysis in the distributional setting combines the abstract functional-analytic framework with appropriate limiting constructions for individual generalized functions.

The present work adopts a constructive viewpoint: generalized Fourier transforms are defined directly from ordinary truncated Fourier integrals through explicit ordered removal of the truncations, without introducing generalized functionals, test-function spaces, or Lebesgue integration. The proposed truncate-and-generalized-limit (t.g.l.) formulation thereby provides a constructive definition of generalized Fourier transforms based entirely on ordinary Fourier integrals and ordered truncation limits. The unified treatment of non-decaying and locally singular functions, the asymmetry between forward and inverse transforms, and the applications to signal synthesis are all consequences of this single constructive definition. The objective of the present formulation is not to replace distribution theory but to provide an alternative starting point: one that remains within classical analysis at every finite stage and reaches generalized Fourier meaning through explicit limits alone.

It should also be emphasized that many of the generalized Fourier transforms discussed in the present work can be constructed and interpreted through ordinary improper Riemann integrals over finite truncation intervals, without requiring the full machinery of measure theory or distribution theory in their practical evaluation. In particular, truncated frequency spectral representations generate localization kernels that converge to the Dirac delta distribution, providing a direct connection between finite-interval Fourier analysis and generalized-function formulations.

The present formulation was motivated by the desire to construct generalized Fourier transforms directly from ordinary Fourier integrals and elementary limiting procedures, thereby providing a framework, constructed directly from ordinary truncated Fourier integrals by ordered truncation limits, that remains within classical analysis while recovering generalized Fourier-transform pairs beyond the $L^1(\mathbb{R})$ setting. In this paper, we revisit Fourier inversion from a constructive and computational viewpoint based on truncation and ordered limiting processes. Instead of assuming global integrability or distributional frameworks from the outset, we begin with truncated singular functions
\[
f_{T,\varepsilon}(t) = f(t)\cdot \mathbf{1}_{[-T,-\varepsilon]\cup[\varepsilon,T]}(t),
\]
where $\varepsilon$ denotes a truncation parameter around a singular point at $t=0$, and $T$ denotes truncation within a finite time domain $[-T,T]$. Here neither p.v.\ nor f.p.\ concepts are introduced, but the functions are truncated.

These truncated functions belong to $L^1(\mathbb{R})$, and the classical Fourier transform can therefore be applied to these finite objects. The original function is then recovered through a limiting process. We refer to this approach as the truncate-and-generalized-limit (t.g.l.) Fourier transform. The forward transform is defined for the truncated functions and the generalized limit is taken. The inverse transform is defined as a limit of the second-order t.g.l.\ integrations in the Fourier dual domains $t$ and $\omega$:
\[
\begin{aligned}
f(t) &= \lim_{T\to\infty,\,\varepsilon\to0}\ \lim_{\Omega\to\infty}\ \frac{1}{2\pi}\int_{-\Omega}^{\Omega} F_{T,\varepsilon}(\omega)\, e^{i\omega t}\,d\omega \\
&= \lim_{T\to\infty,\,\varepsilon\to0}\ \lim_{\Omega\to\infty}\ \frac{1}{2\pi}\int_{-\Omega}^{\Omega}\left[\int_{-\infty}^{\infty} f_{T,\varepsilon}(x)\, e^{-i\omega x}\,dx\right] e^{i\omega t}\,d\omega \\
&= \lim_{T\to\infty,\,\varepsilon\to0}\ \lim_{\Omega\to\infty}\ \frac{1}{\pi}\int_{-\infty}^{\infty} f_{T,\varepsilon}(x)\, \frac{\sin\Omega(t-x)}{t-x}\,dx.
\end{aligned}
\]
Here this order of the limits to recover truncations is crucial in the t.g.l.\ inversion.

Although truncation and limiting procedures themselves are classical concepts in Fourier analysis and improper Riemann integration, the present t.g.l.\ formulation differs structurally from conventional treatments in several important respects.

Both ingredients --- time-domain truncation to restore local integrability, and frequency-domain truncation to recover pointwise convergence --- are individually classical; to the author's knowledge, however, their combination into a single ordered limiting procedure does not appear to have been formalized. This apparent simplicity is deceptive: as shown below, the validity of the construction rests on several structural requirements absent from both distribution theory and the classical $L^1(\mathbb{R})$ theory --- the necessity of a specific order between the two limiting processes, the completeness of the complex exponential basis in both the time and frequency reciprocal domains, and an intrinsic asymmetry between the forward and inverse transforms. Although the truncate-then-limit strategy employed here bears a structural resemblance to the classical Wiener--Khinchin approach to spectral estimation, the two constructions solve fundamentally different problems, as discussed in Section~6.

In ordinary improper Fourier integrals, truncation is usually introduced implicitly through finite integration intervals in the time or frequency domains, after which the limits of the integration domains are taken. By contrast, in the present formulation the Fourier-dual domains themselves are intentionally preserved as infinite domains, while explicit truncations are applied only to the target functions or to the reciprocal-domain localization operations. Consequently, the global orthogonality structure of the exponential basis functions $\exp(-i\omega t)$ and $\exp(i\omega t)$ over the infinite reciprocal domains is preserved throughout the formulation.

This distinction becomes essential beyond the classical $L^1(\mathbb{R})$ framework. In generalized forward Fourier transforms of non-decaying or locally singular functions, pointwise convergence of the forward transform is not generally required. Instead, generalized Fourier meaning emerges through generalized limits or dual-domain pairings similarly to weak or distributional convergence. By contrast, the inverse transform possesses a reciprocal-domain localization structure generated by finite frequency-domain truncation and oscillatory cancellation of the orthogonal exponential basis functions. The present formulation therefore exhibits an intrinsic asymmetry between the forward and inverse transforms.

A central structural feature of the present formulation is that the order of the generalized limiting processes becomes essential outside the classical $L^1(\mathbb{R})$ setting. In ordinary improper integral interpretations, one may naturally consider removal of the original-domain truncation before reciprocal-domain localization. However, the present ordered structure may also be understood as a natural continuous extension of the limiting procedure underlying Fourier-series expansions. In Fourier series, the orthogonal reconstruction is first formed using a finite number of basis functions, after which the asymptotic limit in the number of basis functions is taken. Similarly, in the present t.g.l.\ formulation, reciprocal-domain localization is first completed through finite frequency-domain truncation before removal of the original-domain truncation. Thus the ordered localization limits preserve the constructive orthogonal expansion structure underlying Fourier inversion. This ordered localization structure is closely related to the reciprocal oscillatory cancellation mechanism responsible for pointwise inverse reconstruction.

Within the classical $L^1(\mathbb{R})$ framework, this asymmetry and ordered localization structure remain largely hidden because the truncation limits become harmless under ordinary convergence conditions. Outside the classical framework, however, the ordered dual-domain structure becomes explicit and essential for generalized Fourier meaning.

The present formulation also provides a unified framework for both globally non-decaying functions and locally singular functions. Since truncation and limiting operations associated with infinities are classical mathematical procedures, the t.g.l.\ approach naturally treats both infinite-domain divergence and isolated local singularities within the same operational framework while preserving ordinary oscillatory Fourier integrals throughout the formulation. Moreover, finite-domain truncation automatically eliminates divergent boundary terms that frequently arise in generalized Fourier integrations, asymptotic expansions, and integration-by-parts operations, while the ordered localization limits recover the corresponding generalized Fourier meaning. This economy is not merely notational: the distributional pairing $\langle f',\varphi\rangle=-\langle f,\varphi'\rangle$ presupposes that $f$ already admits a meaningful pairing with test functions, a presupposition that is trivial for regular distributions but requires principal-value regularization, introduced separately and prior to differentiation, for singular functions such as $1/t$. The t.g.l.\ truncation makes this prerequisite explicit rather than implicit, and its success rests on an automatic cancellation of divergent boundary terms at both the local (singular) and infinite truncation points, supplied by the same single mechanism used for the derivative's own boundary term (Section~2.6, Section~5.2).

Although the admissible function class in the present t.g.l.\ framework is more restrictive than the full Schwartz distribution space, the formulation remains entirely within ordinary integral operations and generalized limiting procedures without introducing generalized functionals as primitive objects. From this viewpoint, the present approach may be interpreted as a realization of generalized Fourier analysis closely connected to finite-domain observation, reciprocal-domain localization, and physical signal-processing operations.

The conceptual implications of these structural properties are discussed further in Section 6.

From the viewpoint of the admissible function class and the constructive treatment of Fourier transforms, the present formulation shares a constructive philosophy with Feichtinger's $S_0(\mathbb{R})$ framework, in contrast to the classical Schwartz distributional framework. Both approaches begin with ordinary functions and seek to extend Fourier analysis through constructive procedures rather than by introducing generalized functions as primitive objects. Nevertheless, their mathematical foundations remain essentially different: Feichtinger's theory is based on the Banach test-function space $S_0(\mathbb{R})$ and its continuous dual, whereas the present formulation develops generalized Fourier transforms directly from ordinary truncated Fourier integrals through explicitly ordered limiting processes.

This difference is reflected concretely in the regularity each framework requires. The classical Schwartz space $\mathcal{S}(\mathbb{R})$ requires infinite differentiability together with rapid decay of every derivative, since distribution theory must remain closed under differentiation to arbitrary order. Feichtinger's $S_0(\mathbb{R})$ requires continuity but not differentiability, since it is built around closure under time-frequency shifts and the Fourier transform rather than differentiation, and correspondingly contains $\mathcal{S}(\mathbb{R})$ as a dense subspace. The t.g.l.\ admissible classes require even less: Class $\mathcal{A}_1$ needs only piecewise continuity, matching precisely the hypotheses of the classical Dirichlet convergence theorem on which its rigor is based, since no dual test-function space exists whose closure under any operation must be preserved; Class $\mathcal{A}_2$ requires no regularity at all at its finitely many isolated singular points, these being handled instead by asymmetric truncation (Section~3.2).

A preliminary version of the present approach was reported earlier in [12], where a conditional limiting procedure was introduced in order to interpret the inverse Fourier transform beyond the classical absolute-integrability framework. The present paper develops this idea further by introducing a systematic truncation-based generalized-limit formulation of the Fourier transform pair and clarifying its relation to inversion kernels and generalized delta-function representations.

The remainder of this paper is organized as follows. Section 2 introduces the mathematical foundations of the truncate-and-generalized-limit (t.g.l.) formulation, including truncation of functions, generalized limits, the dual-domain t.g.l.-admissible function class, and related properties of generalized integration. Sections 3--4 present the t.g.l.\ formulation of the forward and inverse Fourier transforms, the corresponding first-order and second-order generalized transforms, and representative examples of generalized Fourier transforms of non-$L^1$ functions. Section 5 presents engineering examples, including truncated chirp signals and matched-filter signal synthesis, which reveal the reciprocal relationship between signal bandwidth and time-domain resolution. Section 6 discusses the constructive meaning of the proposed t.g.l.\ formulation from the viewpoints of finite-domain spectral analysis, band-limited signal synthesis, asymptotic equalization, orthogonal function expansion, and reciprocal-domain localization in Fourier-dual domains. The discussion also clarifies the structural meaning of the dual-domain t.g.l.-admissible function class and its relevance to practical signal observation and reconstruction. Finally, Section 7 summarizes the main conclusions of this work and discusses the significance of the t.g.l.\ formulation in Fourier analysis and its engineering applications.

\section{Truncation and Generalized Limits}

The generalized limit considered here extends the classical use of limits in the treatment of improper integrals and Fourier inversion formulas. Historically, limiting procedures have played a central role in handling infinite domains and oscillatory integrals since the work of Cauchy, Dirichlet, and Riemann. In contrast to distribution theory, where generalized functions are defined through pairing with test functions, the present formulation emphasizes the role of truncation followed by ordered limiting processes. This viewpoint allows generalized Fourier transforms to be interpreted as limits of finite-interval constructions closely related to Fourier series expansions.

\subsection{Function Spaces and Growth Conditions}

Let $\mathcal{S}(\mathbb{R})$ denote the Schwartz space of rapidly decreasing smooth functions defined by the condition $|t^{k}\, d^{n}f(t)/dt^{n}| \le M_{k,n}$ $(k,n \ge 0)$. Its dual space $\mathcal{S}'(\mathbb{R})$ consists of tempered distributions. A locally integrable function $f$ defines a tempered distribution if it grows at most polynomially, i.e., there exist constants $C$ and $m$ such that $|f(t)| \le C(1+|t|)^{m}$.

Many practically important functions, such as constants, polynomials, periodic functions, and chirp signals, belong to $\mathcal{S}'(\mathbb{R})$ and are covered by classical distribution theory. However, the t.g.l.\ formulation is not inherently restricted to this class. For any locally integrable function $f$, the truncated function $f_T = f\cdot\mathbf{1}_{[-T,T]}$ belongs to $L^1(\mathbb{R})$ for each finite $T$, so the forward transform family $\{F_T(\omega)\}$ is well defined at every finite stage regardless of the growth rate of $f$. This includes rapidly increasing functions such as $e^{\alpha t^2}$ $(\alpha > 0)$, which lie outside $\mathcal{S}'(\mathbb{R})$ entirely and cannot be treated within the distributional framework. Such functions belong to the admissible class $\mathcal{A}_1$ of Section~3.2, which imposes no growth restriction, and therefore admit pointwise inverse reconstruction via the t.g.l.\ Inversion Theorem (Theorem~3.1), as illustrated in Example~5. This holds independently of whether the forward transform $F(\omega)$ itself converges: acquiring generalized spectral meaning for $F(\omega)$ when it diverges classically is a separate matter, addressed through pairing with a suitable auxiliary function (Theorem~3.2) --- the same pairing mechanism used whenever the forward transform of any admissible function fails to converge pointwise, not a feature specific to rapidly increasing functions. For $e^{\alpha t^2}$ specifically, $F(\omega)$ does not converge for any $\omega$ (Example~5): the forward transform has no meaning as an ordinary function of $\omega$ at all, and only the paired quantity $\lim_{T\to\infty}\frac{1}{2\pi}\int_{-\infty}^{\infty} F_T(\omega)\,G^*(\omega)\,d\omega$ is defined, for auxiliary functions $g$ decaying at least as fast as $e^{-\beta t^2}$ with $\beta>\alpha$. Pointwise inverse reconstruction of $f(t)$ itself requires no such independent meaning for $F(\omega)$: Theorem~3.1 recovers $f(t)$ directly through the ordered double limit $T\to\infty$, $\Omega\to\infty$ applied to $F_T(\omega)$ at each finite truncation stage, without ever forming $F(\omega)$ as a standalone object. For most of the paper we focus on functions that define tempered distributions, but the t.g.l.\ finite-stage construction is available more broadly.

In the present formulation, the truncated integrals are evaluated as ordinary improper Riemann integrals over finite intervals. The generalized aspect of the proposed t.g.l.\ formulation arises not from replacing the integral itself by a more abstract integration theory, but from the ordered limiting process applied to the truncated integrals.

\subsection{The Dual-domain t.g.l.-Admissible Class $\Atgl$}

In the present work, a function, generalized function or signal is said to belong to the dual-domain truncate-and-generalized-limit (t.g.l.) admissible class denoted as $\Atgl$, if it satisfies the following conceptual conditions.

\begin{enumerate}
\item[(1)] \textbf{Finite-domain observability:} \\
For every finite truncation parameter in the time domain and/or frequency domain, the truncated object admits an ordinary functional representation over the truncated domain, so that classical oscillatory integration with the Fourier basis functions $\exp(\pm i\omega t)$ is well defined.

\item[(2)] \textbf{Finite-domain spectral analyzability:} \\
After truncation in the time domain, the forward oscillatory integral generates a first-order transform family parameterized by the truncation variable, even when pointwise convergence may fail in the infinite-domain limit.

\item[(3)] \textbf{Second-order reconstructability:} \\
Although the first-order transform family may not converge as an ordinary spectral function, its pairings with admissible synthesis functions in the dual domain admit generalized convergence through ordered limiting operations.

\item[(4)] \textbf{Dual-domain reciprocity:} \\
Admissibility is determined by truncation and generalized limiting processes in both time and frequency domains, reflecting the reciprocal structure of the Fourier kernel $\exp(i\omega t)$.

\item[(5)] \textbf{Constructive realizability:} \\
The $\Atgl$ class is not defined by absolute integrability, square integrability, or pointwise convergence of the forward Fourier transform. Nor is it defined a priori as a space of abstract linear functionals, as in distribution theory. Instead, admissibility is determined by whether a constructively generated first-order transform family acquires generalized meaning through second-order pairings in Fourier-dual domains, and whether the original object is recoverable through such pairings.
\end{enumerate}

Examples belonging to this class include ordinary $L^1$ functions, constants, polynomials, periodic functions, singular functions treated by truncation, oscillatory functions such as sinc-type functions and chirp signals, and delta-convergent approximation families. On the other hand, abstract distributions defined only through functional duality may not admit a direct truncation-based realization within the present framework.

\subsection{Truncation of Functions}

For functions singular at $t=t_0$, define the doubly truncated function for $T_1,T_2,\varepsilon_1$ and $\varepsilon_2>0$,
\begin{equation}
f_{T,\varepsilon}(t) =
\begin{cases}
f(t) & -T_1 \le t-t_0 \le -\varepsilon_1,\ \ \varepsilon_2 \le t-t_0 \le T_2 \\[4pt]
0 & \text{otherwise.}
\end{cases}
\label{eq:3}
\end{equation}
Then $f_{T,\varepsilon} \in L^1(\mathbb{R})$ for each finite $T_1, T_2, \varepsilon_1$ and $\varepsilon_2$. For simplicity we assume $t_0=0$ hereafter. If $f$ is locally integrable and if $f$ is not singular, the parameters $\varepsilon_1,\varepsilon_2$ are omitted. If $T_1=\varepsilon_1=\varepsilon_2=0$, and $T_2=\infty$, an asymmetric truncation $f(t)\cdot \mathbf{1}_{[0,\infty]}(t)$ is given.

It is important to note that $f_{T,\varepsilon}(t)$ is defined on the entire real line; truncation is applied to the function itself rather than to the domain of definition. This distinction plays a central role in the present formulation.

Because the truncated function vanishes outside a finite interval, boundary discontinuities are introduced at truncation points. These discontinuities generate Dirac delta--type contributions when differentiation or integration by parts is performed. As a result, generalized spectral objects arise naturally as limits of truncated classical integrals rather than being introduced through distributional axioms.

Two functions recur throughout this paper as special cases for which truncation, in one sense or another, is not strictly required: the Gaussian function $g(t) = e^{-\pi t^2}$, whose absolute integrability in both the time and frequency domains makes truncation unnecessary in either domain, and the chirp signal $f(t) = e^{i\alpha t^2}$ $(\alpha>0)$, which likewise requires no truncation in either domain but for the opposite reason, through oscillatory cancellation of the orthogonal kernel $\exp(i\omega t)$ rather than through absolute integrability. Both cases are discussed in detail in Sections~5.3 and~6.2, with the chirp's forward and inverse transforms derived fully in Appendix~C.

\subsection{Generalized Limit}

Papoulis~[13] uses the term \emph{generalized limit} for the limit of a sequence of generalized functions, indexed by a natural number. We extend this notion here to the limit of a pairing integral $\lim_{x\to c}\int_{-\infty}^{\infty} f_x(t)\,g(t)\,dt$, taken with respect to an arbitrary limiting parameter $x$ --- not necessarily a natural number --- and applicable to any bilinear pairing of a parameter-dependent family $f_x$ against an auxiliary function $g$, independent of the specific integral transform involved.

Let $f_x(t)$ be a family of locally integrable functions defined on a domain $t$, where $x$ denotes a truncation parameter or an auxiliary limiting parameter. The family $f_x(t)$ generated by a truncation and limiting procedure is called a first-order generalized-limit family. In general, pointwise convergence of $f_x(t)$ as $x\to c$ is not required. Let $g(t)$ be an auxiliary function defined on a Fourier-dual domain $t$, satisfying, for the specific pair $(f_x,c)$ under consideration, $g \in L^1(\mathbb{R})$ and existence of the limit below (cf.\ Theorem~3.2); this is a condition on $g$ relative to the pairing at hand, and is distinct from the t.g.l.-admissible class of Section~2.2 and Section~3.2, which characterizes the target functions $f$ themselves. Define the limiting operation
\[
\lim_{x\to c} \int_{-\infty}^{\infty} f_x(t)\, g(t)\,dt,
\]
then it is called a first-order generalized limit, which does not necessarily converge. When the first-order generalized-limit family appears on a Fourier-dual domain, namely as a family $F_x(\omega)$, the second-order generalized limit is defined by
\[
\lim_{x\to c} \int_{-\infty}^{\infty} F_x(\omega)\, G(-\omega)\,d\omega = \lim_{x\to c} \int_{-\infty}^{\infty} F_x(\omega)\, G^{*}(\omega)\,d\omega,
\]
where $G(\omega)\ (\leftrightarrow g(t))$ is a function on the Fourier-dual domain.

Thus, generalized meaning is not necessarily obtained through pointwise convergence of the first-order family itself, but may emerge only through a second generalized integration against an auxiliary function defined on the Fourier-dual domain.

This definition provides a unified framework for describing truncation limits, kernel limits, and weak-derivative approximations that arise naturally in the truncate-and-generalized-limit formulation of Fourier transforms. In particular, when the auxiliary function belongs to the Schwartz space this notion of generalized limit coincides with convergence in the sense of tempered distributions, while in the present work the limiting procedure is applied directly to truncated classical integrals prior to distributional interpretation.

The Fourier transform, in which the pairing kernel is specialized to the exponential $e^{-i\omega t}$, is the primary application of this general framework developed throughout this paper: both the truncated forward transform and its inverse are described within it, but with a structural asymmetry between them --- the forward transform of the truncated integral need not converge classically for functions outside $L^1(\mathbb{R})$, whereas the corresponding inverse transform, constructed through the ordered dual-domain limits of Theorem~3.1, converges pointwise on the admissible classes considered in this paper. It is precisely this asymmetry that motivates the constructions below.

\subsection{Distributions as the Generalized Limit}

A tempered distribution $f \in \mathcal{S}'(\mathbb{R})$ acts on each test function $g \in \mathcal{S}(\mathbb{R})$ through a pairing
\[
\langle f, g \rangle = k_{f,g},
\]
where $k_{f,g}$ is a finite scalar depending on both $f$ and $g$. Thus distributions are defined through their action on test functions rather than through pointwise evaluation.

If a sequence of functions $f_n(t)$ satisfies
\[
\langle f, g \rangle = \lim_{n\to\infty} \int_{-\infty}^{\infty} f_n(t)\, g(t)\,dt = k_{f,g},
\]
then we write $f_n(t) \to f(t)$ in the sense of distribution.

Distributional convergence may be represented within the generalized-limit notation when the auxiliary functions belong to Schwartz space. However, the two concepts are structurally different: in the t.g.l.\ formulation, truncation, multiplication, and limiting operations are applied to ordinary functions prior to any functional interpretation, whereas distributions are defined as continuous linear functionals from the outset.

\subsection{The t.g.l. Integration by Parts}

\begin{theorem}
In the truncate-and-generalized-limit formulation, boundary terms arising in integration by parts are compensated by delta-function contributions generated by truncation discontinuities.
\end{theorem}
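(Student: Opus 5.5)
I would make the statement precise by working with the doubly truncated function $f_{T,\varepsilon}$ of Eq.~(\ref{eq:3}), written as $f_{T,\varepsilon}(t)=f(t)\,\chi(t)$ with $\chi=\mathbf{1}_{[-T_1,-\varepsilon_1]\cup[\varepsilon_2,T_2]}$. I take $f$ to be continuously differentiable on each closed piece and $g$ to be a smooth auxiliary function, as in Section~2.4. The claim to prove is then the identity
\[
\int_{-\infty}^{\infty} \frac{d}{dt}\bigl[f_{T,\varepsilon}(t)\bigr]\, g(t)\,dt \;=\; -\int_{-\infty}^{\infty} f_{T,\varepsilon}(t)\, g'(t)\,dt ,
\]
which holds exactly, with no boundary term, for every finite truncation stage. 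Here the derivative of the truncated function is understood as the classical derivative on the open pieces plus the jump contributions at the four truncation points. The content of the theorem is that these jump contributions are precisely what cancels the boundary terms of ordinary integration by parts.

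The first step is to compute the derivative of the product. By the product rule, $\frac{d}{dt}f_{T,\varepsilon}=f'(t)\chi(t)+f(t)\chi'(t)$. The indicator has derivative
\[
\chi'(t)=\delta(t+T_1)-\delta(t+\varepsilon_1)+\delta(t-\varepsilon_2)-\delta(t-T_2).
\]
To stay within the paper's constructive viewpoint, I would not postulate this. Instead I would realise each $\delta$ as a generalized limit in the sense of Section~2.4. Concretely, I replace each jump of $\chi$ by a linear ramp of width $h$, whose derivative is a box of height $1/h$, and pair it with $f g$. Letting $h\to0$ gives $f(a^{\pm})g(a)$ at each truncation point $a$, by continuity of $f g$ from the relevant side. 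This step is the main obstacle. One has to evaluate $f$ at the singular truncation points $\pm\varepsilon$ as one-sided limits from inside the support, and check that the ramp limit is taken before any $T\to\infty$ or $\varepsilon\to0$ limit. I would handle this by keeping all truncation parameters fixed throughout, since the theorem is a finite-stage statement.

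The second step is to do the classical integration by parts on each of the two intervals. On $[-T_1,-\varepsilon_1]$ and on $[\varepsilon_2,T_2]$ we have
\[
\int_a^b f' g\,dt=\bigl[f g\bigr]_a^b-\int_a^b f g'\,dt .
\]
This produces the boundary terms $f(-\varepsilon_1)g(-\varepsilon_1)-f(-T_1)g(-T_1)+f(T_2)g(T_2)-f(\varepsilon_2)g(\varepsilon_2)$. The delta contributions from the first step, $\int f\chi' g\,dt$, give exactly the negatives of these four terms. Adding the two pieces yields the displayed identity.

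Finally, I would remark that the identity holds before any limit is taken, so it persists under the ordered generalized limit $\varepsilon\to0$, $T\to\infty$ whenever the right-hand side converges. This is true even when the individual boundary terms, such as $f(\pm\varepsilon)g(\pm\varepsilon)$ for $f=1/t$ or $f(\pm T)g(\pm T)$ for growing $f$, diverge. This is the sense in which the divergent boundary terms are "compensated". I would also note the special case $g(t)=e^{-i\omega t}$, taken with $g$ bounded rather than decaying. This is legitimate because the supports are compact, and it gives $\mathcal{F}[\frac{d}{dt}f_{T,\varepsilon}]=i\omega F_{T,\varepsilon}(\omega)$ at every finite stage. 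That is the form used later in Section~5.2.
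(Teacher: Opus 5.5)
Your proposal is correct and takes essentially the same route as the paper. The derivative of $f_{T,\varepsilon}$ is written as the classical derivative on the two pieces plus the truncation deltas $f(-T_1)\delta(t+T_1)-f(-\varepsilon_1)\delta(t+\varepsilon_1)+f(\varepsilon_2)\delta(t-\varepsilon_2)-f(T_2)\delta(t-T_2)$; integration by parts on each piece then produces four boundary terms that cancel these delta contributions exactly, giving $\int f'_{T,\varepsilon}g\,dt=-\int f_{T,\varepsilon}g'\,dt$. Your ramp realization of the jumps mirrors the paper's linear-slope approximation $H_\varepsilon$, and your use of general asymmetric truncation, where the paper sets $\varepsilon_1=\varepsilon_2$ and $T_1=T_2$ for notation, is a harmless generalization.
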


Assume a truncated function $f_{\varepsilon,T} \in L^1_{\mathrm{loc}}(\mathbb{R})$, which is piecewise $C^1$ with compact support. Integration by parts is therefore justified in the classical sense. This cancellation mechanism plays a central structural role in the truncate-and-generalized-limit formulation. It replaces the auxiliary prescriptions of principal-value or finite-part regularization by an intrinsic boundary compensation arising directly from truncation-induced delta contributions.

\begin{proof}
The derivative of truncated function $f_{T,\varepsilon}(t)$ (Eq.~(\ref{eq:3})) with a singular point at $t=0$ is given as:
\[
f'_{T,\varepsilon}(t) =
\begin{cases}
f'(t) & -T_1 \le t \le -\varepsilon_1,\ \ \varepsilon_2 \le t \le T_2 \\[4pt]
0 & t<-T_1,\ \ -\varepsilon_1<t<\varepsilon_2,\ \ T_2<t
\end{cases}
\ +\ \delta_{\mathrm{trc}}(t), \qquad -\infty<t<\infty,
\]
where $T_1,T_2,\varepsilon_1$, and $\varepsilon_2>0$, and
\[
\delta_{\mathrm{trc}}(x) = f(-T_1)\,\delta(x+T_1) - f(-\varepsilon_1)\,\delta(x+\varepsilon_1) + f(\varepsilon_2)\,\delta(x-\varepsilon_2) - f(T_2)\,\delta(x-T_2).
\]
The infinite derivatives at the discontinuities caused by truncation are represented by the Dirac delta function.

The integration-by-parts relation used above may also be interpreted in terms of generalized-limit approximations of weak derivatives. For example, the derivative of the unit step function can be realized as the limit of derivatives of truncated linear-slope approximations
\[
H_\varepsilon(t) =
\begin{cases}
0 & t<-\varepsilon \\[2pt]
(t+\varepsilon)/2\varepsilon & -\varepsilon \le t \le \varepsilon \\[2pt]
1 & t>\varepsilon,
\end{cases}
\]
whose derivatives form a delta-convergent sequence. In this way the generalized-limit integration-by-parts formula provides a constructive realization of weak differentiation within the truncate-and-generalized-limit framework.

For simplicity of notation, we assume here symmetric truncation; $\varepsilon_1=\varepsilon_2=\varepsilon$ and $T_1=T_2=T$. Let $g(t)$ be a smooth function. Integration by parts gives
\begin{align*}
\int_{-\infty}^{\infty} f'_{\varepsilon,T}(t)\,g(t)\,dt
&= \left(\int_{-T}^{-\varepsilon} + \int_{\varepsilon}^{T}\right) f'(t)\,g(t)\,dt + \int_{-\infty}^{\infty} \delta_{\mathrm{trc}}(t)\,g(t)\,dt \\[4pt]
&= \big[f(t)g(t)\big]_{-T}^{-\varepsilon} - \int_{-T}^{-\varepsilon} f(t)\,g'(t)\,dt + \big[f(t)g(t)\big]_{\varepsilon}^{T} - \int_{\varepsilon}^{T} f(t)\,g'(t)\,dt \\[4pt]
&\quad + f(-T)g(-T) - f(-\varepsilon)g(-\varepsilon) + f(\varepsilon)g(\varepsilon) - f(T)g(T) \\[4pt]
&= -\int_{-\infty}^{\infty} f_{\varepsilon,T}(t)\,g'(t)\,dt,
\end{align*}
where the boundary terms cancel exactly with the delta-function contributions.
\end{proof}

\begin{remarks}
For improper integrals of singular functions, the boundary term in integration by parts sometimes diverges. To obtain a meaningful integration result, the Cauchy p.v.\ and the Hadamard f.p.\ has been introduced. Such boundary-generated delta contributions appear naturally in the distributional differentiation of truncated functions (see Gel'fand and Shilov [8]).

Regularization of the divergent term arises naturally from the truncation procedure in the t.g.l.\ integral. This is a crucial distinction between the t.g.l.\ integral and the improper integrals. The Dirac delta function is here formulated in connection with a step function, which describes the discontinuity appearing in the t.g.l.\ approaches. This mechanism reflects the structural role of truncation in providing a unified treatment of singular behavior. In the present formulation these delta contributions arise directly from truncation discontinuities of ordinary functions and therefore appear prior to any distributional interpretation, providing a constructive realization of weak differentiation within the truncation framework.

Furthermore the t.g.l.\ integration-by-parts formula is consistent with distributional differentiation:
\[
\langle f', g \rangle = -\langle f, g' \rangle.
\]

The distributional definition of differentiation by pairing with test functions is entirely natural when the distribution is regular, that is, when it corresponds to an ordinary locally integrable function without singularities: the pairing rule $\langle f',\varphi\rangle = -\langle f,\varphi'\rangle$ then coincides exactly with classical integration by parts, and no further device is required. For a singular distribution such as $\mathrm{p.v.}(1/t)$, however, the same rule requires a second, independent ingredient: the principal-value regularization must first be introduced to give $\langle f,\varphi'\rangle$ itself a meaning, before the pairing rule can be applied. The distributional definition of differentiation therefore combines two separate mechanisms --- duality pairing and principal-value (or Hadamard finite-part) regularization --- introduced at different stages for different reasons. The t.g.l.\ formulation, by contrast, gives a unified definition: truncation, the single mechanism used throughout this paper for both regular and singular functions alike, supplies the same delta-compensated boundary cancellation in either case, with no additional regularization device introduced specifically for the singular case.

This two-device pattern is not confined to differentiation: it already appears at the level of defining a singular object such as $1/t$ in the first place. In the classical framework, $1/t$ is not locally integrable at the origin, so no ordinary distribution can be assigned to it by pairing alone; the principal value $\langle\mathrm{p.v.}(1/t),\varphi\rangle := \lim_{\varepsilon\to0}\int_{|t|>\varepsilon}\varphi(t)/t\,dt$ requires this second, case-specific regularizing limit to be introduced before pairing can be applied at all. A sequential approach built on mild distributions, in the manner of Feichtinger's own reformulation of Lighthill's method over $S_0(\mathbb{R})$, follows the same two-step pattern: an approximating sequence in $S_0(\mathbb{R})$ (regular, but not necessarily smooth) has to be constructed and shown to converge against every test function, before the pairing that defines the mild distribution can be completed. The specific space of test functions and approximating sequences differs from the classical case, in line with the weaker regularity $S_0(\mathbb{R})$ requires, but the underlying two-device structure --- a general pairing mechanism, plus a second, singularity-specific approximation or regularization step --- remains the same. The t.g.l.\ formulation again requires only one: the doubly truncated function $f_{T,\varepsilon}$ (Section~3.2, Example~6) excises a shrinking neighbourhood of the singular point by the same truncation device used throughout this paper for ordinary functions, with no second mechanism introduced specifically because the function happens to be singular.

For the specific case of $1/t$, none of the distributional, damping-factor, or sequential approaches is, in fact, self-contained. The distributional pairing itself, as noted above, requires the principal value to be introduced before $\langle f,\varphi\rangle$ can even be evaluated. The damping factor $e^{-\sigma|t|}$, as already noted in the Introduction, leaves the behaviour near $t=0$ entirely unaffected and therefore cannot regularize the singularity on its own; it too must be paired with the same symmetric cancellation used in the principal value. Similarly, the sequential approach requires the approximating ``good functions'' to be chosen with a specific odd symmetry about the singular point (for instance $g_n(t) = t/(t^2+1/n^2)$), precisely so that the same symmetric cancellation is built into the construction of the sequence itself. In each of these three cases, the principal-value mechanism is not avoided but is embedded, implicitly, within the auxiliary device the approach introduces.

Within the t.g.l.\ formulation itself, however, the requirement of symmetry depends on which of the two transforms is in question, and the reason for this distinction is instructive. The truncated forward transform $F_{T,\varepsilon_1,\varepsilon_2}(\omega)$, evaluated as an independent limiting value as $\varepsilon_1,\varepsilon_2\to0$, does require symmetric removal of the singular neighbourhood ($\varepsilon_1=\varepsilon_2$): near $t=0$ the function $1/t$ does not decay, so convergence depends on the exact cancellation of two equally divergent contributions, and any asymmetry leaves an uncancelled logarithmic divergence, $\ln(\varepsilon_1/\varepsilon_2)$. The outer truncation limits $T_1,T_2\to\infty$, by contrast, require no such symmetry: away from the singularity $1/t$ decays, so each tail integral converges independently by the classical Dirichlet test for oscillatory integrals, and no cancellation between the two sides is needed.

The t.g.l.\ inverse transform, however, is free of both requirements. For fixed $T,\varepsilon_1,\varepsilon_2$, the doubly truncated function $f_{T,\varepsilon_1,\varepsilon_2}(t)$ is an ordinary, bounded, piecewise continuous, compactly supported function regardless of any asymmetry, and its exact pointwise reconstruction via the classical Dirichlet convergence theorem (Theorem~3.1) is guaranteed independently of the shape of the truncation. Moreover, for any fixed $t=\tau\neq0$, $f_{T,\varepsilon_1,\varepsilon_2}(\tau)=f(\tau)$ exactly as soon as $T>|\tau|$ and $\varepsilon_1,\varepsilon_2<|\tau|$, a fact of set inclusion rather than of analytic convergence. Consequently $\lim_{T\to\infty}\lim_{\varepsilon_1,\varepsilon_2\to0} f_{T,\varepsilon_1,\varepsilon_2}(t) = f(t)$ regardless of how asymmetrically the inner or outer limits are approached. This is a concrete instance of the general independence of Theorem~3.1 (inversion) from Theorem~3.2 (the pairing mechanism defining generalized meaning for the forward transform itself, Section~3.4): the value of $F(\omega)$ may fail to converge, or depend on the manner of approach, without in any way affecting the pointwise recovery of $f(t)$ by the inverse construction. For the function $1/t$ specifically, this leaves the t.g.l.\ inverse transform as the only construction considered here requiring no symmetry whatsoever, in either domain.

Taken together, the comparison developed in this section shows that the t.g.l.\ formulation handles locally singular functions such as $1/t$ within a single, unified constructive mechanism --- truncation --- and without ever introducing a test-function space, a combination achieved by none of the distributional, damping-factor, or sequential approaches considered here, each of which requires either an embedded principal-value-type device, a singularity-specific auxiliary construction, or both.
\end{remarks}

\subsection{Distinction between Improper Integrals and Truncation-based Generalized Limits}

The truncate-and-generalized-limit formulation should be distinguished from the classical notion of improper integrals. In the conventional definition of an improper integral,
\[
\int_{-\infty}^{\infty} f(t)\,dt = \lim_{T\to\infty} \int_{-T}^{T} f(t)\,dt,
\]
provided the limit exists.

In contrast, the truncate-and-generalized-limit Fourier transform is constructed by first introducing truncated functions
\[
f_T(t) = f(t)\cdot \mathbf{1}_{[-T,T]}(t),
\]
which belong to $L^1(\mathbb{R})$ for each finite value of $T$. The Fourier transform is then applied to the truncated functions, and the generalized limit is taken afterward:
\[
F(\omega) = \lim_{T\to\infty} \int_{-\infty}^{\infty} f_T(t)\, e^{-i\omega t}\,dt.
\]
Thus, unlike improper integrals, the limiting process does not define the integral itself but rather defines the transform through a sequence of well-defined classical Fourier transforms.

\section{Truncate-and-Generalized-Limit Fourier Transform}

In this section we analyze the interaction between time truncation and frequency truncation and show how their ordered limits lead to a stable formulation of the Fourier inversion formula.

The classical Fourier inversion theorem is typically formulated under assumptions such as $f \in L^1(\mathbb{R})$ or $f \in L^2(\mathbb{R})$, where convergence holds uniformly or in the $L^2$-sense. However, many functions of analytical interest do not belong to $L^1(\mathbb{R})$, and for such functions the standard inversion formula may fail to converge. We assume throughout that the functions considered define tempered distributions, so that the truncation limits are interpreted in the sense of convergence in $\mathcal{S}'(\mathbb{R})$.

To extend the applicability of Fourier inversion, we introduce the t.g.l.\ framework. Before stating the definitions, it is important to address a natural question that arises for a reader familiar with distribution theory or Lebesgue integration: \emph{if those well-established frameworks are not used, where does mathematical rigor come from?}

The present formulation should not be interpreted as replacing Lebesgue integration or Schwartz distribution theory. Those frameworks provide the standard rigorous foundations for generalized Fourier analysis, and this paper does not challenge them. The objective is different: to show that, for the class of functions and transforms considered here, rigor can be achieved by a different route. Every calculation is first performed on finite truncation intervals, where only ordinary integration over bounded domains is required. The generalized Fourier transform is then defined through explicitly prescribed ordered limits of the truncation parameters. The proofs rest on two classical tools: ordinary integration over bounded intervals, and rigorous analysis of the convergence of these ordered limits via the Dirichlet convergence theorem for $L^1$ functions (Theorem~3.1 and Theorem~3.5). The absence of functional-analytic machinery reflects a difference in methodology rather than a relaxation of mathematical rigor. The present formulation thus demonstrates that, for the class of generalized Fourier transforms considered here, mathematical rigor can be achieved either through the functional-analytic framework of distributions or through an explicit analysis of ordered limits of ordinary truncated Fourier integrals; whenever the ordered limits exist, the resulting transforms agree with the corresponding distributional transforms for all examples in this paper.

Unlike the distributional and sequential approaches, where test functions appear in the definition of the generalised Fourier transform itself, the t.g.l.\ formulation defines the forward and inverse transforms directly through ordinary Riemann integrals of truncated functions, without introducing any test-function space. Admissible auxiliary functions enter only in a secondary role --- to assign generalised spectral meaning to the forward transform family when it does not converge pointwise --- and are entirely absent from the definition of the inverse transform. Unlike the distributional formulation, where generalized Fourier meaning is introduced through continuous duality pairings, the present formulation introduces generalized meaning directly through ordered limits of ordinary truncated Fourier integrals. The transform and its inverse are defined through compactly supported truncations in both time and frequency domains, followed by an ordered limiting process. In particular, the limit to recover frequency cutoff is taken to infinity prior to the limit to recover time cutoff. This order of limits is essential when $f \notin L^1(\mathbb{R})$, as interchanging the limits may lead to divergence.

Unlike classical approaches based on the Cauchy p.v.\ integral, the t.g.l.\ formulation does not require symmetric truncation as a structural condition. Convergence is determined solely by the existence of the ordered generalized limit. Symmetric truncations are employed in this paper only for notational convenience unless otherwise specified.

All arguments are developed within a classical analytical framework, using pointwise multiplication of functions, parameter limits, weak differentiation, and improper Riemann integration. Although ordered limiting procedures appear implicitly in classical treatments of Fourier inversion, the present formulation identifies them as a structural component of the reconstruction mechanism arising naturally from finite-interval truncation rather than as a technical device introduced to justify interchange of integrations. Furthermore, the truncations introduced in the present formulation are not auxiliary regularization devices for divergent integrals. Rather, they define physically and mathematically meaningful finite-domain Fourier transforms, from which generalized spectral meaning emerges only through ordered dual-domain limits.

\subsection{Definitions}

The variables $t$ and $\omega$ form a Fourier-dual pair. If $t$ denotes time with unit second, then $\omega$ denotes angular frequency with unit rad/s, so that the phase quantity $\omega t$ is dimensionless (physically interpreted in radians), and the kernel $\exp(i\omega t)$ is well defined.

\textbf{Definition 3.1} (The Forward Fourier Transform).
In the present formulation truncation is expressed by defining the truncated function $f_T(t)=f(t)\cdot \mathbf{1}_{[-T,T]}(t)$, where $f(t)$ is assumed regular.

For truncated $f_T \in L^1(\mathbb{R})$, we define the truncated Fourier transform:
\[
F_T(\omega) = \int_{-\infty}^{\infty} f_T(t)\, e^{-i\omega t}\,dt.
\]
Since $f_T \in L^1(\mathbb{R})$, $F_T$ is continuous and bounded. The family $F_T(\omega)$ is called the first-order truncate-and-generalized-limit (t.g.l.) Fourier-transform family.

\begin{remarks}
While this is equivalent to truncating the integration interval for ordinary integrals, the truncated-function form becomes advantageous when integration by parts is considered, since the derivative of the indicator function produces delta terms that represent the boundary contributions explicitly.
\end{remarks}

Pointwise convergence of $F_T(\omega)$ as $T\to\infty$ is not required. Instead, generalized meaning is obtained through the second-order generalized limit
\[
\lim_{T\to\infty} \frac{1}{2\pi}\int_{-\infty}^{\infty} F_T(\omega)\, G^{*}(\omega)\,d\omega,
\]
where $G(\omega)$ is an auxiliary function on the Fourier-dual domain.

The t.g.l.\ inverse Fourier transform defined below is a special case of the second-order generalized limit obtained by choosing
\[
G_{\Omega,t}(\omega) = G_t(\omega)\,\mathbf{1}_{[-\Omega,\Omega]}(\omega) = e^{-i\omega t}\, \mathbf{1}_{[-\Omega,\Omega]}(\omega),
\]
followed by the limit $\Omega\to\infty$,
\[
\overleftarrow{f}(t) = \lim_{T\to\infty} \overleftarrow{f}_T(t) = \lim_{T\to\infty}\lim_{\Omega\to\infty} \frac{1}{2\pi}\int_{-\Omega}^{\Omega} F_T(\omega)\, e^{i\omega t}\,d\omega.
\]
Thus the pointwise inverse reconstruction is interpreted as a second-order generalized limit in the Fourier-dual domains $t$ and $\omega$.

For $f \notin L^1(\mathbb{R})$ in general, the truncate-and-generalized-limit (t.g.l.) Fourier transform $F$ is defined by
\begin{equation}
F(\omega) = \lim_{T_1,T_2\to\infty}\ \lim_{\varepsilon_1,\varepsilon_2\to0} F_{T,\varepsilon}(\omega),
\label{eq:4}
\end{equation}
within the t.g.l.\ framework.

Hereafter we denote $\lim_{T_1,T_2\to\infty}\lim_{\varepsilon_1,\varepsilon_2\to0}$ as $\lim_{T\to\infty,\,\varepsilon\to0}$ for simplicity of expression.

\textbf{Definition 3.2} (Inverse Fourier Transform).
For fixed $T_1, T_2, \varepsilon_1$ and $\varepsilon_2$, define
\[
\overleftarrow{f}_{T,\varepsilon}(t) = \lim_{\Omega\to\infty} \frac{1}{2\pi}\int_{-\Omega}^{\Omega} F_{T,\varepsilon}(\omega)\, e^{i\omega t}\,d\omega.
\]
The t.g.l.\ inverse transform is defined by
\begin{equation}
\begin{aligned}
f(t) &= \lim_{T\to\infty,\,\varepsilon\to0} f_{T,\varepsilon}(t) = \lim_{T\to\infty,\,\varepsilon\to0} \overleftarrow{f}_{T,\varepsilon}(t) \\
&= \lim_{T\to\infty,\,\varepsilon\to0}\ \lim_{\Omega\to\infty}\ \frac{1}{2\pi}\int_{-\Omega}^{\Omega} F_{T,\varepsilon}(\omega)\, e^{i\omega t}\,d\omega \\
&= \lim_{T\to\infty,\,\varepsilon\to0}\ \lim_{\Omega\to\infty}\ \frac{1}{2\pi}\int_{-\Omega}^{\Omega}\left[\int_{-\infty}^{\infty} f_{T,\varepsilon}(x)\, e^{-i\omega x}\,dx\right] e^{i\omega t}\,d\omega.
\end{aligned}
\label{eq:5}
\end{equation}

The essential structural feature is that the limit $T_1,T_2\to\infty,\ \varepsilon_1,\varepsilon_2\to0$ is taken after $\Omega\to\infty$ for the integral: it reflects faithfully the transition from the Fourier series to integral for non-integrable functions (refer to Appendix A). It also ensures pointwise convergence for functions $f\in L^1_{\mathrm{loc}}(\mathbb{R})$ admitting truncation-based inversion limits. The order of limit $T_1,T_2\to\infty$ and $\varepsilon_1,\varepsilon_2\to0$ is interchangeable, since the truncation intervals do not overlap. If $f$ is not singular, notation of $\varepsilon_1,\varepsilon_2$ is omitted.

The symmetry of the truncation interval in the frequency domain plays an essential role in the convergence of the truncated Fourier inversion. When the frequency truncation is taken over symmetric intervals $[-\Omega,\Omega]$, the resulting kernel is proportional to $\sin(\Omega x)/\pi x$, which yields an approximate identity converging to the Dirac delta distribution.

In the classical theory, the inverse Fourier transform is formulated as an improper integral whose validity presupposes convergence of the forward transform in the ordinary sense. Thus, existence of the forward integral as a classical object serves as a structural prerequisite for inversion.

In the t.g.l.\ framework, the pointwise reconstruction in the inverse transform does not rely on prior classical convergence of the forward transform; instead, the convergence is determined within the truncation-limit structure itself as a second-order generalized limit in the Fourier-dual domains $t$ and $\omega$.

Although truncation of non-integrable functions produces $L^1$-functions on finite intervals, the present formulation does not rely merely on truncation itself. Rather, the essential point is that the Fourier inverse transform is defined through an ordered limiting procedure applied outside the truncated oscillatory integrals, preserving the transition structure from Fourier series to Fourier integrals beyond the classical $L^1(\mathbb{R})$ framework. In this sense the second-order t.g.l.\ formulation in dual domains provides a constructive realization of generalized Fourier inversion that differs from both ordinary improper integrals and distributional definitions based on test-function duality.

The use of ordered limiting procedures in the reconstruction of the inverse transform extends the conditioned limiting framework introduced previously in [12]. The present formulation makes this limiting structure explicit through truncation of the integration domain combined with generalized limits taken outside the oscillatory integral, providing a direct interpretation of the inversion process without relying on distributional pairing.

\subsection{Admissible Classes and the t.g.l.\ Inversion Theorem}

The following theorem gives a precise characterization of two concrete subclasses of the t.g.l.-admissible class $\Atgl$, for which the inversion formula can be established by classical analysis alone, without invoking any functional-analytic machinery beyond the Dirichlet convergence theorem for improper Riemann integrals.

\textbf{Definition 3.3} (Admissible Classes).
\begin{itemize}
\item[(I)] \textbf{Class $\mathcal{A}_1$ (locally integrable functions with no singular points):} A function $f : \mathbb{R} \to \mathbb{C}$ belongs to $\mathcal{A}_1$ if:
    \begin{enumerate}
    \item[(a)] $f$ is locally integrable on $\mathbb{R}$,
    \item[(b)] $f$ is piecewise continuous, with one-sided limits $f(t^\pm)$ existing at every point.
    \end{enumerate}
    No condition is imposed on the growth of $f$ as $|t| \to \infty$: since the truncated function $f_T = f \cdot \mathbf{1}_{[-T,T]}$ is compactly supported, local integrability alone guarantees $f_T \in L^1(\mathbb{R})$ for every finite $T$, regardless of how $f$ behaves at infinity. The distinguishing feature of $\mathcal{A}_1$ relative to $\mathcal{A}_2$ below is therefore the location of any irregularity: $\mathcal{A}_1$ has no point of non-local-integrability anywhere on $\mathbb{R}$, while $\mathcal{A}_2$ permits finitely many isolated singular points.

\item[(II)] \textbf{Class $\mathcal{A}_2$ (functions with finitely many isolated singular points):} A function $f$ belongs to $\mathcal{A}_2$ if $f \in \mathcal{A}_1$ on $\mathbb{R} \setminus \{t_1, \dots, t_m\}$ for finitely many singular points $t_1, \dots, t_m$. No condition is imposed on the behavior of $f$ at the points $t_k$ themselves: since the doubly truncated function $f_{T,\varepsilon}$ excludes an $\varepsilon$-neighborhood of each $t_k$, it is compactly supported and bounded away from every $t_k$, so $f_{T,\varepsilon} \in L^1(\mathbb{R})$ for every finite $T$ and $\varepsilon>0$ regardless of the severity of the singularity.
\end{itemize}

\begin{theorem}[t.g.l.\ Inversion Theorem]
Let $f \in \mathcal{A}_1$. Then for each $T > 0$ the truncated function $f_T = f \cdot \mathbf{1}_{[-T,T]}$ belongs to $L^1(\mathbb{R})$, and at every point $t$ of continuity of $f$:
\[
\lim_{T\to\infty}\ \lim_{\Omega\to\infty}\ \frac{1}{2\pi}\int_{-\Omega}^{\Omega} F_T(\omega)\,e^{i\omega t}\,d\omega = f(t),
\]
where $F_T(\omega) = \int_{-\infty}^{\infty} f_T(x)\,e^{-i\omega x}\,dx$. The order of limits is essential and cannot in general be reversed for $f \notin L^1(\mathbb{R})$. For $f \in \mathcal{A}_2$, the same conclusion holds with $f_T$ replaced by the doubly truncated function $f_{T,\varepsilon}$ of Eq.~(\ref{eq:3}), under the additional ordered limit $\varepsilon \to 0$ taken after $\Omega \to \infty$ and before $T \to \infty$:
\[
\lim_{T\to\infty}\ \lim_{\varepsilon\to 0}\ \lim_{\Omega\to\infty}\ \frac{1}{2\pi}\int_{-\Omega}^{\Omega} F_{T,\varepsilon}(\omega)\,e^{i\omega t}\,d\omega = f(t),
\]
at every non-singular point $t$ of continuity of $f$.
\end{theorem}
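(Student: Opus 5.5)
The plan is to reduce everything to the classical Dirichlet convergence theorem applied to a fixed, compactly supported, piecewise continuous function, and then to remove the truncation by the set-inclusion observation already made in the Remarks of Section~2.6. First, for fixed $T$ (and, in the $\mathcal{A}_2$ case, fixed $\varepsilon$), $f_T$ is bounded on $[-T,T]$ (piecewise continuous with one-sided limits on a compact interval), hence $f_T\in L^1(\mathbb{R})$ and $F_T$ is continuous and bounded. Since $f_T$ has compact support, Fubini for ordinary Riemann integrals over the bounded rectangle $[-\Omega,\Omega]\times[-T,T]$ is legitimate. I will interchange the two integrations and use $\frac{1}{2\pi}\int_{-\Omega}^{\Omega}e^{i\omega(t-x)}\,d\omega=\frac{\sin\Omega(t-x)}{\pi(t-x)}$ to obtain the Dirichlet integral
\[
\frac{1}{2\pi}\int_{-\Omega}^{\Omega}F_T(\omega)e^{i\omega t}\,d\omega=\frac{1}{\pi}\int_{-T}^{T}f(x)\,\frac{\sin\Omega(t-x)}{t-x}\,dx .
\]

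Second, I take $\Omega\to\infty$ with $T$ fixed. At a point $t$ with $|t|<T$ where $f$ is continuous, the function $f_T$ is continuous at $t$. I would split the integral into $|x-t|<\delta$ and its complement. On the complement, $f_T(x)/(t-x)$ is integrable, so the contribution tends to $0$ by the Riemann--Lebesgue lemma; for a piecewise continuous function on a bounded interval this is elementary, via approximation by step functions. On $|x-t|<\delta$, I write $f(x)=f(t)+[f(x)-f(t)]$. The first part gives $f(t)\cdot\frac1\pi\int_{-\Omega\delta}^{\Omega\delta}\frac{\sin u}{u}du\to f(t)$. For the second part, the local Dirichlet condition is the delicate point: mere continuity of $f$ at $t$ does not by itself guarantee Dirichlet convergence. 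I would therefore invoke the classical Dirichlet theorem in the form the paper relies on, namely piecewise continuity together with bounded variation or one-sided derivatives near $t$. I would state explicitly that ``piecewise continuous'' in Definition~3.3 is read in the classical Dirichlet sense, i.e.\ piecewise smooth. The result is $\overleftarrow{f}_T(t)=\tfrac12[f_T(t^+)+f_T(t^-)]=f(t)$.

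Third, I take $T\to\infty$. For fixed $t$, as soon as $T>|t|$ we have $\overleftarrow{f}_T(t)=f(t)$ exactly, so the outer limit is that of an eventually constant family and equals $f(t)$. For $\mathcal{A}_2$ the same argument applies verbatim to $f_{T,\varepsilon}$, which is bounded, compactly supported and piecewise continuous. At a non-singular continuity point $t$, once $\varepsilon<\min_k|t-t_k|$ and $T>|t|$, the inner value equals $f(t)$. The limits $\varepsilon\to0$ and then $T\to\infty$ are therefore again limits of eventually constant quantities, and no symmetry of the truncation is needed.

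Finally, for the claim that the order cannot in general be reversed, I would give a counterexample with $f\equiv1$. Then $F_T(\omega)=2\sin(\omega T)/\omega$, and for fixed $\omega\neq0$ this has no limit as $T\to\infty$. Hence $\lim_{T}$ taken inside the $\omega$-integral is undefined, and the reversed iterated expression does not exist pointwise. The main obstacle is the second step: making the Dirichlet pointwise convergence hypothesis match the stated class. I expect to have to strengthen or interpret the regularity in Definition~3.3 (local bounded variation) to cite the classical theorem honestly.
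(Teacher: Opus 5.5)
Your inversion argument follows the paper's proof: interchange on a bounded rectangle, the Dirichlet kernel, the classical Dirichlet theorem at fixed $T$, then an eventually constant outer limit. The $\mathcal{A}_2$ case uses the same set-inclusion argument.

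Your worry in step two is justified, and it exposes a gap in the paper itself. The paper asserts Dirichlet convergence at every continuity point of $f_T$, but continuity there is not enough. A continuous periodic function whose Fourier series diverges at a point (du Bois-Reymond) lies in $\mathcal{A}_1$. By Riemann localization, every truncation $f_T$ with $T>\pi$ then has a divergent inner limit at that point. So adding a local Dini or bounded-variation hypothesis is a necessary correction of the statement, not a charitable reading of Definition~3.3. The paper's own Theorem~3.5 assumes $C^1$ near $t$.

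\textbf{The genuine gap is your non-reversibility counterexample.} For $f\equiv1$ the reversed iterated limit does exist. With $\Omega$ fixed,
\[
I_{T,\Omega}(t)=\frac{1}{\pi}\int_{-T}^{T}\frac{\sin\Omega(t-x)}{t-x}\,dx=\frac{1}{\pi}\int_{\Omega(t-T)}^{\Omega(t+T)}\frac{\sin u}{u}\,du\;\longrightarrow\;1 \quad (T\to\infty),
\]
so $\lim_{\Omega}\lim_{T}I_{T,\Omega}(t)=1=f(t)$. The paper records in Example~1 that the interchange is allowed for constants.

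What you actually showed is weaker: $\lim_T$ cannot be moved inside the $\omega$-integral, because $F_T(\omega)$ has no pointwise limit. That is the interchange of a limit with an integral, not the interchange of the two limits $\lim_T$ and $\lim_\Omega$. You need an $f$ for which $\lim_T I_{T,\Omega}(t)$ itself fails to exist.

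The paper's choice $f(t)=t$ works. Writing $x=t-(t-x)$ gives
\[
I_{T,\Omega}(t)=\frac{t}{\pi}\int_{\Omega(t-T)}^{\Omega(t+T)}\frac{\sin u}{u}\,du-\frac{2\sin(\Omega t)\,\sin(\Omega T)}{\pi\Omega}.
\]
The last term oscillates in $T$ with no limit whenever $\sin\Omega t\neq0$. So the inner limit in the reversed order does not exist, and the reversed iterated limit is undefined.
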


\begin{proof}
For $f \in \mathcal{A}_1$: Since $f_T \in L^1(\mathbb{R})$ for each finite $T$, the Fourier transform $F_T(\omega)$ exists as an ordinary absolutely convergent integral, and $F_T$ is bounded and continuous. By the classical Dirichlet convergence theorem for $L^1$ functions (see e.g.\ [3]), the inner limit gives
\[
\lim_{\Omega\to\infty}\ \frac{1}{2\pi}\int_{-\Omega}^{\Omega} F_T(\omega)\,e^{i\omega t}\,d\omega
= \int_{-\infty}^{\infty} f_T(x)\,\frac{\sin\Omega(t-x)}{\pi(t-x)}\,dx \to f_T(t)
\]
at every continuity point of $f_T$ (which coincides with $f$ in the interior of $[-T,T]$). Since $f_T(t) = f(t)$ for all $T > |t|$, the outer limit $T \to \infty$ gives $f(t)$. For $f \in \mathcal{A}_2$: the doubly truncated function $f_{T,\varepsilon}$ removes a neighbourhood of each singularity via asymmetric truncation, and the inner limits proceed via the same Dirichlet-kernel convergence argument as above, applied on each of the resulting subintervals. A detailed proof for continuously differentiable $f \in \mathcal{A}_1$ using the Riemann--Lebesgue lemma explicitly is given in Theorem~3.5.
\end{proof}

\begin{remarks}
The theorem shows that the t.g.l.\ inversion formula is a rigorous statement within classical analysis, requiring only the Dirichlet convergence theorem for $L^1$ functions and ordinary Riemann integration. No functional-analytic machinery --- no Lebesgue dominated convergence, no test-function duality, no Banach space theory --- is invoked. Because $\mathcal{A}_1$ imposes no growth condition at infinity, it includes not only constants, polynomials, and periodic functions, but also rapidly increasing functions such as $e^{\alpha t^2}$ $(\alpha > 0)$ (Example~5), which lie outside the space of tempered distributions entirely; the classes $\mathcal{A}_1$ and $\mathcal{A}_2$ together include all the examples treated in this paper: constants, polynomials, periodic functions, rapidly increasing functions, singular kernels $1/t^n$, Dirac delta-convergent sequences, and chirp signals. The precise characterization of the full admissible class $\Atgl$ and its relation to existing harmonic-analysis frameworks is left for future work. Functions that have no locally integrable pointwise representation --- such as the Dirac delta $\delta(t)$ and its derivatives --- lie outside $\mathcal{A}_1 \cup \mathcal{A}_2$, since the truncated function $\delta_T = \delta\cdot\mathbf{1}_{[-T,T]}$ cannot be formed as an ordinary $L^1$ function. The t.g.l.\ framework handles such objects indirectly through delta-convergent sequences of ordinary functions (Example~8), which approximate them in the generalized sense.
\end{remarks}

\subsection{Orthogonality, the Dirichlet kernel, and Locality}

The orthogonality of complex exponential functions on finite intervals forms the foundation of Fourier series expansions. On the interval $[-T,T]$, exponential functions satisfy the classical orthogonality relation
\[
\frac{1}{2T}\int_{-T}^{T} e^{i\omega_n t}\, e^{-i\omega_m t}\,dt =
\begin{cases}
1 & n=m \\
0 & n \ne m,
\end{cases}
\]
where $\omega_k = k\pi/T$ $(k=0,\pm1,\pm2,\dots)$.

This relation expresses the completeness of exponential functions on finite intervals and provides the basis for reconstruction of functions from their Fourier coefficients.

When passing from Fourier series to Fourier integrals on the whole real line, orthogonality is naturally interpreted as a limiting form of this finite-interval relation. In the present work, orthogonality on the real line is therefore defined through the normalized truncation limit
\[
\lim_{T\to\infty} \frac{1}{2T}\int_{-T}^{T} e^{i\omega_n t}\, e^{-i\omega_m t}\,dt =
\begin{cases}
1 & n=m \\
0 & n \ne m,
\end{cases}
\]
where the frequencies $\omega_k$ take a continuous value.

For finite $T$, this relation expresses only approximate orthogonality:
\[
\frac{1}{2T}\int_{-T}^{T} e^{i\omega_n t}\, e^{-i\omega_m t}\,dt =
\begin{cases}
1 & n=m \\
\dfrac{\sin[(\omega_n-\omega_m)T]}{(\omega_n-\omega_m)T} & n \ne m.
\end{cases}
\]
Thus orthogonality on the real line appears as a limiting structure obtained from finite-interval orthogonality through truncation and passage to the limit $T\to\infty$. This viewpoint is consistent with the truncate-and-generalized-limit framework adopted throughout this paper.

Closely related to this orthogonality structure is the Dirichlet kernel
\[
D_\Omega(x) = \frac{1}{2\pi}\int_{-\Omega}^{\Omega} e^{i\omega x}\,d\omega = \frac{\sin\Omega x}{\pi x},
\]
which plays a central role in both Fourier series and Fourier integral representations.

The appearance of the Dirichlet kernel in the inversion formula is closely related to the treatment of functions beyond the classical $L^1$ framework. Indeed, if the inverse Fourier transform is written formally as
\[
f(t) = \frac{1}{2\pi}\int_{-\infty}^{\infty} F(\omega)\, e^{i\omega t}\,d\omega,
\]
then the integral is not generally justified unless $F(\omega)\in L^1(\mathbb{R})$. To extend the inversion procedure beyond this restriction, it is natural to introduce frequency truncation and consider
\[
\frac{1}{2\pi}\int_{-\Omega}^{\Omega} F(\omega)\, e^{i\omega t}\,d\omega.
\]
Substituting the expression
\[
F(\omega) = \int_{-\infty}^{\infty} f(t)\, e^{-i\omega t}\,dt,
\]
and for $f(t) \in L^1(\mathbb{R})$ interchanging the order of integration after truncation yields
\[
\int_{-\infty}^{\infty} f(x) \left(\int_{-\Omega}^{\Omega} e^{i\omega(t-x)}\,d\omega\right) dx,
\]
where the inner integral produces the Dirichlet kernel. In this way the inversion formula is expressed through a localization kernel acting on the original function, and the limit $\Omega \to \infty$ can be interpreted as a truncation-based limiting process. This procedure allows the inversion formula to remain meaningful even when the Fourier transform is not absolutely integrable.

In the Fourier inverse transform, the Dirichlet kernel appears as
\[
D_\Omega(t-x) = \frac{\sin\Omega(t-x)}{\pi(t-x)},
\]
which can be written as
\[
D_\Omega(t-x) = \frac{1}{2\pi}\int_{-\Omega}^{\Omega} e^{i\omega t} e^{-i\omega x}\,d\omega.
\]
Then $D_\Omega(t-x)$ shows approximate orthogonality between $\exp(i\omega t)$ and $\exp(i\omega x)$ in $\omega$-domain. As $\Omega\to\infty$ the orthogonality becomes perfect, the kernel acts as a localization kernel: the kernel becomes increasingly concentrated near $x=t$ and reproduces the local value of a function at $t$ through convolution-type representations.

Accordingly, Fourier inversion may be interpreted as a limiting localization process generated by truncated exponential integrals. Under suitable regularity assumptions on $f$, the truncated inversion integrals converge pointwise to $f(t)$, reflecting the localization property of the Dirichlet-type kernel.

In classical distribution theory, the limiting behavior of the Dirichlet kernel is expressed through the identity
\[
\lim_{\Omega\to\infty} D_\Omega(x) = \delta(x).
\]
in the sense of distributions.

In contrast, the present approach interprets this localization property directly through truncation limits without introducing the Dirac delta function explicitly at this stage. This viewpoint preserves the connection with finite-interval orthogonality and provides a constructive interpretation of Fourier inversion based on truncation and generalized limits.

A key structural feature of the present framework is that truncation is applied to the function through multiplication by an indicator function while the domain of definition remains the entire real line. In particular, the exponential basis functions $\exp(i\omega t)$ are not restricted to a finite interval. As a consequence, the orthogonality structure underlying Fourier expansions is preserved during the limiting process.

From this viewpoint the inverse Fourier transform may be interpreted as a continuous-frequency extension of Fourier's original orthogonal-function expansion principle. The discrete orthogonal basis of Fourier series is replaced by a continuum obtained through the limit $T\to\infty$, and reconstruction of the function is achieved through localization produced by truncated exponential kernels.

The oscillatory $\exp(i\omega t)$ constitute an orthogonal basis of Fourier analysis. Owing to this orthogonality, cancellation of oscillatory contributions plays an essential role in the convergence of Fourier-type integrals. Consequently, absolute convergence in the Lebesgue sense is not always the most natural criterion for such integrals, since meaningful limits may exist through oscillatory cancellation even when the corresponding absolute integral diverges.

We next examine the notion of locality in Fourier inversion. For ordinary functions the value $f(t_0)$ depends only on the point $t_0$. However, Fourier inversion reconstructs a function through integration over all frequencies, and therefore its locality properties are not immediately evident.

A modification of a function at a single point does not affect its integral properties over an interval, since a point has measure zero. In this sense such modifications remain local in the time domain. In contrast, modification of the frequency domain is inherently nonlocal in time. Removing a single frequency component corresponds, in the inverse transform, to subtracting a function of the form $\exp(i\omega_0 t)$, which extends over the entire real line. Thus a localized change in frequency produces a global change in the time-domain representation.

Within the truncate-and-generalized-limit framework, the orthogonal expansion structure of Fourier representations is preserved because truncation is applied only to the function and not to the domain itself. The inverse Fourier transform therefore appears as a continuous-frequency analogue of Fourier series reconstruction obtained through limiting orthogonality and localization generated by truncated exponential kernels.
\subsection{Theorems, Proofs, Propositions and Observations}

\begin{theorem}[Convergence of t.g.l.\ Fourier transform in the sense of the generalized limit]
For a real function $f$ and its truncation $f_T(t)=f(t)\mathbf{1}_{[-T,T]}(t)$. Let $f_T \in L^1_{\mathrm{loc}}(\mathbb{R})$ and denote its Fourier transform by $F_T(\omega)$. Consider a function $g(t) \in L^1(\mathbb{R})$ well defined such that,
\[
\lim_{T\to\infty} \int_{-\infty}^{\infty} f_T(t)\, g(t)\,dt,
\]
exists. Then the t.g.l.\ integral of $F_T$ with $G\ (\leftrightarrow g)$ converges as ($G^{*}(\omega) = G(-\omega)$):
\[
\lim_{T\to\infty} \frac{1}{2\pi}\int_{-\infty}^{\infty} F_T(\omega)\, G^{*}(\omega)\,d\omega = \lim_{T\to\infty} \int_{-\infty}^{\infty} f_T(t)\, g(t)\,dt.
\]
\end{theorem}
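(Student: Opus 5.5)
The plan is to establish the identity at each fixed truncation level $T$, as an exact Parseval-type (multiplication) formula between the finite objects $f_T$ and $F_T$, and only afterwards pass to the limit $T\to\infty$. Once
\[
\frac{1}{2\pi}\int_{-\infty}^{\infty} F_T(\omega)\,G^{*}(\omega)\,d\omega=\int_{-\infty}^{\infty} f_T(t)\,g(t)\,dt
\]
holds for every finite $T$, the right-hand side has a limit by hypothesis, so the left-hand side has the same limit. The outer limit then requires no further analysis. All of the work lies in the fixed-$T$ identity.

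For fixed $T$, I would treat $f$ as in the class $\mathcal{A}_1$ of Definition~3.3, so that $f_T$ is compactly supported, piecewise continuous and integrable. Then $F_T$ is bounded and continuous. Since $f$ is real and we want $G^{*}(\omega)=G(-\omega)$, I would take $g$ real, so that $G(-\omega)=\int g(s)e^{i\omega s}\,ds$.

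Because $F_T$ need not be integrable in $\omega$, I would not interchange integrals on the whole $\omega$-line directly. Instead I would first introduce a frequency truncation $[-\Omega,\Omega]$, exactly as in Definition~3.2, and write
\[
\frac{1}{2\pi}\int_{-\Omega}^{\Omega}F_T(\omega)G(-\omega)\,d\omega=\int_{-\infty}^{\infty} g(s)\left[\frac{1}{2\pi}\int_{-\Omega}^{\Omega}F_T(\omega)e^{i\omega s}\,d\omega\right]ds .
\]
This interchange is justified by Fubini on a bounded $\omega$-interval, since $F_T$ is bounded and $g\in L^1$. Substituting the definition of $F_T$ and interchanging once more, which is legitimate because $f_T\in L^1$ and $\omega$ ranges over a compact set, turns the bracket into the Dirichlet integral $\int f_T(x)\,D_\Omega(s-x)\,dx$ of Section~3.3. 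By the inner limit in Theorem~3.1 (the classical Dirichlet convergence theorem), this bracket tends to $f_T(s)$ at every continuity point of $f_T$, hence for all but finitely many $s$.

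The remaining step is to pass $\Omega\to\infty$ under the $s$-integral against $g\in L^1$. I expect this to be the main obstacle. It needs a bound on the partial Dirichlet integrals of $f_T$ that is uniform in $\Omega$ and $s$.

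My first attempt would be to strengthen the regularity slightly: if $f_T$ is of bounded variation (for example piecewise $C^1$, as in Theorem~3.5), then the second mean value theorem bounds $\int f_T(x)D_\Omega(s-x)\,dx$ uniformly in $\Omega$ and $s$. Dominated convergence against $|g|$ then gives the limit $\int f_T g\,ds$.

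If instead one only wants the general piecewise-continuous case, I would use the symmetric route. Write the bracket as $\int f_T(x)\,(g*D_\Omega)(x)\,dx$, and impose on the auxiliary function the mild extra condition $G\in L^1(\mathbb{R})$, which I read into the phrase "well defined". Then $(g*D_\Omega)(x)=\frac{1}{2\pi}\int_{-\Omega}^{\Omega}G(\omega)e^{i\omega x}\,d\omega$ is bounded uniformly in $\Omega$ by $\|G\|_{1}/(2\pi)$. On the other hand, $g*D_\Omega\to g$ at almost every point by the same Dirichlet theorem, or, if $g$ is only integrable, by the inversion for $G\in L^1$. Since $f_T$ has compact support, dominated convergence on $[-T,T]$ again yields $\int f_T g\,dt$.

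Either way, the fixed-$T$ identity follows, and with it the theorem.
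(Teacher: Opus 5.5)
Your argument has the same skeleton as the paper's. You prove $\frac{1}{2\pi}\int F_T G^{*}\,d\omega=\int f_T g\,dt$ at each finite $T$, and the outer limit then follows immediately from the hypothesis. The difference lies entirely in the fixed-$T$ step.

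The paper disposes of that step in one line, as ``a property of the Fourier transform'' for $f_T,g\in L^1(\mathbb{R})$. You correctly observe that $F_T$ and $G$ are then only bounded, so $F_TG^{*}$ need not be integrable and the Parseval/multiplication identity is not available in that generality. You supply an actual proof through symmetric frequency truncation, the Dirichlet kernel and dominated convergence, at the price of an extra hypothesis (bounded variation of $f_T$, or $G\in L^1$). Some extra hypothesis is genuinely necessary.

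In your bounded-variation route, the frequency integral must also be read as $\lim_{\Omega\to\infty}\int_{-\Omega}^{\Omega}$, and you should state this explicitly. For example, take $f_T=\mathbf{1}_{[0,T]}$ and an even real $g\in L^1(\mathbb{R})$ with $G(\omega)=1/\log(e+|\omega|)$; such $g$ exists by P\'olya's criterion. Then $\int|F_TG|\,d\omega=\infty$, and the one-sided improper integrals diverge like $\log\log\Omega$. The symmetric limit exists only because the non-oscillating part $G(\omega)/(i\omega)$ of $F_TG^{*}$ is odd.

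Two smaller points.
\begin{itemize}
\item In the bounded-variation route, take the pointwise convergence of $\int f_T(x)D_\Omega(s-x)\,dx$ from Jordan's test, not from Theorem~3.1. Piecewise continuity alone does not make Dirichlet integrals converge, so that appeal is unsound as written; it is harmless only because you also assume bounded variation.
\item In the $G\in L^1$ route, the frequency truncation is superfluous. Since $\iint|f_T(x)||G(\omega)|\,dx\,d\omega=\|f_T\|_1\|G\|_1<\infty$, Fubini applies on all of $\mathbb{R}^2$, and the $L^1$ inversion theorem for $g$ finishes the argument. This route is just the classical multiplication formula.
\end{itemize}

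In short, the paper's version buys brevity by citing an identity whose hypotheses it does not check. Yours gives a correct proof and makes visible the trade-off between regularity of $f_T$ (yielding only a symmetric frequency limit) and decay of $G$ (yielding absolute convergence).
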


\begin{proof}
Since $f_T$ and $g$ are in $L^1(\mathbb{R})$, from a property of the Fourier transform, we have:
\[
\frac{1}{2\pi}\int_{-\infty}^{\infty} F_T(\omega)\, G^{*}(\omega)\,d\omega = \int_{-\infty}^{\infty} f_T(t)\, g(t)\,dt.
\]
When we take the limit $T\to\infty$ on both sides of the equation, we get the required result.
\end{proof}

\begin{proposition}[Local optimality of frequency truncation]
Consider a function $f(t)$ and its truncation $f_T(t)=f(t)\mathbf{1}_{[-T,T]}(t)$. Assume that $f_T \in L^2_{\mathrm{loc}}(\mathbb{R})$ and denote its Fourier transform by $F_T(\omega)$. For a given bandwidth $\Omega>0$, define
\[
F_{T,\Omega}(\omega) = F_T(\omega)\,\mathbf{1}_{[-\Omega,\Omega]}(\omega),
\]
and
\[
f_{T,\Omega}(t) = \frac{1}{2\pi}\int_{-\Omega}^{\Omega} F_T(\omega)\, e^{i\omega t}\,d\omega = \frac{1}{2\pi}\int_{-\infty}^{\infty} F_{T,\Omega}(\omega)\, e^{i\omega t}\,d\omega.
\]
Define the approximation error:
\[
E_T(\Omega) = \int_{-\infty}^{\infty} \left|f_T(t) - f_{T,\Omega}(t)\right|^2 dt.
\]
$f_{T,\Omega}(t)$ is locally optimum in the sense that the error depends only on the spectral components outside the truncation band.
\end{proposition}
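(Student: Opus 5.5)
Since $f_T$ is supported in $[-T,T]$ and $f_T\in L^2_{\mathrm{loc}}(\mathbb{R})$, the first step is to note that $f_T\in L^1(\mathbb{R})\cap L^2(\mathbb{R})$. Hence $F_T$ is bounded, continuous, and square integrable. The proof then rests on Parseval's identity $\int_{-\infty}^{\infty}|h(t)|^2\,dt=\frac{1}{2\pi}\int_{-\infty}^{\infty}|H(\omega)|^2\,d\omega$. I would obtain it from the pairing identity used in the proof of Theorem~3.2, namely $\frac{1}{2\pi}\int F_T G^{*}\,d\omega=\int f_T\,g\,dt$ with $g=f_T^{*}$ (more generally $g=h^{*}$).

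Next I identify $f_{T,\Omega}$. Because $F_{T,\Omega}=F_T\mathbf{1}_{[-\Omega,\Omega]}$ is bounded with compact support, it lies in $L^1\cap L^2$. Therefore $f_{T,\Omega}$ is an ordinary, continuous, square-integrable function, and its Fourier transform is $F_{T,\Omega}$. For the error function $e_{T,\Omega}=f_T-f_{T,\Omega}\in L^2(\mathbb{R})$, linearity of the transform gives the spectrum
\[
F_T(\omega)-F_{T,\Omega}(\omega)=F_T(\omega)\,\mathbf{1}_{\{|\omega|>\Omega\}}(\omega).
\]
Applying Parseval to $e_{T,\Omega}$ then yields
\[
E_T(\Omega)=\frac{1}{2\pi}\int_{|\omega|>\Omega}|F_T(\omega)|^2\,d\omega .
\]
This is the precise form of the claim that the error depends only on spectral components outside the band.

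To justify the word ``optimum'', I would add a short comparison. Take any $h$ whose spectrum $H$ is supported in $[-\Omega,\Omega]$. Parseval gives
\[
\int|f_T-h|^2\,dt=\frac{1}{2\pi}\int_{|\omega|\le\Omega}|F_T-H|^2\,d\omega+\frac{1}{2\pi}\int_{|\omega|>\Omega}|F_T|^2\,d\omega .
\]
This is minimised exactly when $H=F_T$ on the band, that is, when $h=f_{T,\Omega}$. I would also note that $E_T(\Omega)\to0$ as $\Omega\to\infty$ by dominated convergence, which is consistent with the inner limit in Theorem~3.1.

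The main obstacle is the Parseval identity for $e_{T,\Omega}$, since $f_{T,\Omega}$ is not compactly supported in time and Theorem~3.2 was stated for $L^1$ functions. I would handle this by computing $\int|f_T-f_{T,\Omega}|^2\,dt$ term by term. The term $\int|f_T|^2\,dt$ is covered by the $L^1\cap L^2$ pairing. The cross term $\int f_T\,f_{T,\Omega}^{*}\,dt$ can be rewritten, by Fubini on the bounded domain $[-T,T]\times[-\Omega,\Omega]$, as $\frac{1}{2\pi}\int_{-\Omega}^{\Omega}|F_T|^2\,d\omega$. For $\int|f_{T,\Omega}|^2\,dt$ I would use the Plancherel identity for band-limited $L^2$ functions, taken as classical. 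If one wishes to avoid Lebesgue machinery, it can instead be derived by truncating $f_{T,\Omega}$ in time and letting the truncation go to infinity.
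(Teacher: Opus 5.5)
Your proposal is correct and follows essentially the paper's route. Both arguments reduce $E_T(\Omega)$ by Parseval/Plancherel, expanding $|f_T-f_{T,\Omega}|^2$ into three terms whose cross term equals $\frac{1}{2\pi}\int_{-\Omega}^{\Omega}|F_T(\omega)|^2\,d\omega$, which yields $E_T(\Omega)=\frac{1}{2\pi}\int_{|\omega|>\Omega}|F_T(\omega)|^2\,d\omega$. Your version is somewhat more careful than the paper's in three respects: it handles complex-valued $f$ (the paper assumes real functions), it justifies the cross term by Fubini on the bounded rectangle $[-T,T]\times[-\Omega,\Omega]$ rather than by the formal identity $\int e^{-i\omega t}e^{i\omega x}\,d\omega=2\pi\delta(x-t)$, and it actually proves the optimality among band-limited $h$, which the paper only asserts.
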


We omit the notation of $T$ for simplicity. Expand the integrand as:
\[
E(\Omega) = \int_{-\infty}^{\infty} \left[f^2(t) - 2f(t)f_\Omega(t) + f_\Omega^2(t)\right] dt.
\]
Considering properties of the Fourier transform for real functions
\[
\int_{-\infty}^{\infty} f_1(x) f_2(x)\,dx = \frac{1}{2\pi}\int_{-\infty}^{\infty} F_1^{*}(\omega) F_2(\omega)\,d\omega = \frac{1}{2\pi}\int_{-\infty}^{\infty} F_1(\omega) F_2^{*}(\omega)\,d\omega,
\]
\[
\int_{-\infty}^{\infty} f^2(x)\,dx = \frac{1}{2\pi}\int_{-\infty}^{\infty} |F(\omega)|^2\,d\omega,
\]
we obtain
\[
2\pi E(\Omega) = \int_{-\infty}^{\infty} |F(\omega)|^2\,d\omega - 2\int_{-\infty}^{\infty} F(\omega) F_\Omega^{*}(\omega)\,d\omega + \int_{-\Omega}^{\Omega} |F(\omega)|^2\,d\omega
= \int_{-\infty}^{\infty} |F(\omega)|^2\,d\omega - \int_{-\Omega}^{\Omega} |F(\omega)|^2\,d\omega.
\]
Thus the approximation error depends only on the spectral components outside the truncation band $|\omega|\le\Omega$, and the function $f_\Omega(t)$ is the optimal $L^2$-approximation of $f(t)$ among all functions whose Fourier transforms vanish outside this band. If we take limits, $\lim_{\Omega\to\infty} E(\Omega) = 0$.

\begin{remarks}[L$^2$ convergence]
Proposition~3.1 is, in essence, a classical $L^2$ (Plancherel-type) convergence result: the limit $\lim_{\Omega\to\infty} E(\Omega) = 0$ holds precisely when $f \in L^2(\mathbb{R})$. The sinc function of Example~10 illustrates this sharply: since its transform $F(\omega)$ is already compactly supported on $[-\omega_0,\omega_0]$, the error $E(\Omega)$ vanishes exactly, not merely in the limit, for every $\Omega \geq \omega_0$. The chirp signal of Example~11, by contrast, lies outside this $L^2$ framework entirely: $\int_{-\infty}^{\infty}|f(t)|^2\,dt = \int_{-\infty}^{\infty} 1\,dt = \infty$, so Proposition~3.1's convergence statement simply does not apply to it. Nevertheless, the t.g.l.\ formulation handles both functions within a single framework, as discussed in Section~5.3 and Appendix~C: the sinc function through this $L^2$ convergence mechanism, and the chirp signal through oscillatory cancellation of the orthogonal kernel $\exp(i\omega t)$ rather than through decay in either domain.
\end{remarks}

The local optimality follows from the orthogonality of functions $\exp(i\omega t)$ as shown below:

The local optimality of $f_\Omega(t)$ is given by the obvious result shown above:
\[
\int_{-\infty}^{\infty} F(\omega) F_\Omega^{*}(\omega)\,d\omega = \int_{-\Omega}^{\Omega} |F(\omega)|^2\,d\omega.
\]
We rewrite the relation in the $t$-domain:
\[
\int_{-\infty}^{\infty} F(\omega) F_\Omega^{*}(\omega)\,d\omega = \int_{-\infty}^{\infty} \left[\int_{-\infty}^{\infty} f(t)\, e^{-i\omega t}\,dt\right] \left[\int_{-\infty}^{\infty} f_\Omega(x)\, e^{i\omega x}\,dx\right] d\omega.
\]
Interchanging the order of integrations, we obtain:
\[
\int_{-\infty}^{\infty} f(t)\,dt \int_{-\infty}^{\infty} f_\Omega(x)\,dx \int_{-\infty}^{\infty} e^{-i\omega t}\, e^{i\omega x}\,d\omega
= \int_{-\infty}^{\infty} f(t)\,dt \int_{-\infty}^{\infty} f_\Omega(x)\, 2\pi\,\delta(x-t)\,dx
\]
\[
= 2\pi\int_{-\infty}^{\infty} f(t)\, f_\Omega(t)\,dt = \int_{-\Omega}^{\Omega} |F(\omega)|^2\,d\omega.
\]
where
\[
\int_{-\infty}^{\infty} e^{-i\omega t}\, e^{i\omega x}\,d\omega = 2\pi\,\delta(x-t),
\]
is used, which shows the orthogonality of $\exp(i\omega t)$ in $\omega$-domain.

The above result implies that frequency components inside and outside the interval $|\omega|\le\Omega$ contribute independently to the total signal energy. From this viewpoint the inverse Fourier transform may be interpreted as a continuous-frequency extension of the orthogonal-function expansion appearing in Fourier series, where the discrete orthogonal basis is replaced by a continuum obtained through the limiting process $T\to\infty$.

\begin{proposition}[Local optimality of time truncation]
Let $f(t)\in L^2[-T_2,T_2]$, and let $T_2>T_1>0$. Let $f_{T_1,\Omega}(t)$ denote the inverse transform reconstructed from the time truncated function supported on $[-T_1,T_1]$ and frequency truncation $[-\Omega,\Omega]$. Define the reconstruction error
\[
E_\Omega(T_1,T_2) = \int_{-T_2}^{T_2} \left|f(t) - f_{T_1,\Omega}(t)\right|^2 dt.
\]
Assume that
\[
\lim_{\Omega\to\infty} f_{T_1,\Omega}(t) = f_{T_1}(t),
\]
in the pointwise sense. Then
\[
\lim_{\Omega\to\infty} E_\Omega(T_1,T_2) = E(T_1,T_2) = \int_{-T_2}^{T_2} \left|f(t) - f_{T_1}(t)\right|^2 dt.
\]
Moreover, since $f_{T_1}(t)=0$ for $|t|>T_1$, we obtain
\[
E(T_1,T_2) = \int_{-T_2}^{-T_1} f^2(t)\,dt + \int_{T_1}^{T_2} f^2(t)\,dt.
\]
\end{proposition}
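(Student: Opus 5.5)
The plan is to reduce the statement to a passage of the limit $\Omega\to\infty$ under the integral over the \emph{bounded} interval $[-T_2,T_2]$. Since $f_{T_1,\Omega}$ is obtained from the compactly supported function $f_{T_1}\in L^2\cap L^1$, we have
\[
f_{T_1,\Omega}(t)=\int_{-T_1}^{T_1} f(x)\,\frac{\sin\Omega(t-x)}{\pi(t-x)}\,dx ,
\]
and by hypothesis this converges pointwise to $f_{T_1}(t)$. The integrand of $E_\Omega(T_1,T_2)$, namely $|f(t)-f_{T_1,\Omega}(t)|^2$, therefore converges pointwise to $|f(t)-f_{T_1}(t)|^2$.

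The work is then to justify exchanging $\lim_{\Omega\to\infty}$ with $\int_{-T_2}^{T_2}$. Two routes are available, and I would use the second.

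\textbf{Route 1 (domination).} One would dominate the integrand uniformly in $\Omega$ on the finite interval. This needs a uniform bound on $f_{T_1,\Omega}$, which holds only if $f$ is, say, bounded on $[-T_1,T_1]$, using the uniform boundedness of $\int_a^b \sin(\Omega u)/u\,du$. A bare $L^2$ hypothesis does not supply such a bound.

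\textbf{Route 2 (Proposition~3.1).} I prefer to avoid domination and use the $L^2$ convergence already established. Proposition~3.1, applied to $f_{T_1}\in L^2(\mathbb{R})$, gives
\[
\int_{-\infty}^{\infty}|f_{T_1}-f_{T_1,\Omega}|^2\,dt=\frac{1}{2\pi}\int_{|\omega|>\Omega}|F_{T_1}(\omega)|^2\,d\omega\to 0 .
\]
Hence $f_{T_1,\Omega}\to f_{T_1}$ in $L^2[-T_2,T_2]$. Writing $f-f_{T_1,\Omega}=(f-f_{T_1})+(f_{T_1}-f_{T_1,\Omega})$, Minkowski's inequality shows that the $L^2[-T_2,T_2]$ norms of $f-f_{T_1,\Omega}$ converge to that of $f-f_{T_1}$. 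Squaring gives $\lim_{\Omega\to\infty}E_\Omega(T_1,T_2)=E(T_1,T_2)$. The pointwise hypothesis then serves only to identify the $L^2$ limit with $f_{T_1}$ as a function, not merely almost everywhere.

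\textbf{Explicit form of $E(T_1,T_2)$.} This is elementary. On $[-T_1,T_1]$ we have $f_{T_1}=f$, so the integrand vanishes there. On $T_1<|t|\le T_2$ we have $f_{T_1}=0$, so the integrand is $|f(t)|^2$, which equals $f^2(t)$ for real $f$ as tacitly assumed in the statement. Splitting $\int_{-T_2}^{T_2}$ into the three subintervals gives the stated formula. The two boundary points $t=\pm T_1$ have measure zero, so the value of $f_{T_1}$ there, including Dirichlet midpoint values, is irrelevant.

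\textbf{Main obstacle.} The main obstacle is the limit interchange in the first step, and I expect it to be the only nontrivial point. I would first try to invoke Proposition~3.1 as above, since that requires nothing beyond what the paper has already used. If one insists on staying with pointwise arguments, the fallback is Route~1: state an additional boundedness (or piecewise continuity, as in $\mathcal{A}_1$) assumption on $f$ over $[-T_1,T_1]$, under which the truncated Dirichlet integrals are uniformly bounded, and conclude by bounded convergence on the finite interval $[-T_2,T_2]$.
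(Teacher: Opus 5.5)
Your proof is correct, but it takes a different route from the paper. The paper gives no argument beyond the statement itself. There, $\lim_{\Omega\to\infty}E_\Omega(T_1,T_2)=E(T_1,T_2)$ is obtained by simply carrying the assumed pointwise convergence $f_{T_1,\Omega}\to f_{T_1}$ under $\int_{-T_2}^{T_2}$, with no domination or other justification. The explicit formula then comes from the same splitting at $\pm T_1$ that you give.

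You correctly recognize that pointwise convergence alone does not license this interchange. You replace it with the $L^2$ statement of Proposition~3.1, $\|f_{T_1}-f_{T_1,\Omega}\|_{L^2(\mathbb{R})}^2=\frac{1}{2\pi}\int_{|\omega|>\Omega}|F_{T_1}(\omega)|^2\,d\omega\to0$, followed by the reverse triangle inequality in $L^2[-T_2,T_2]$. What your route buys:
\begin{itemize}
\item It is a complete proof using only $f\in L^2[-T_2,T_2]$, which makes $f_{T_1}\in L^1\cap L^2$.
\item It makes the pointwise hypothesis superfluous. Since $E(T_1,T_2)$ does not see changes of $f_{T_1}$ on null sets, that hypothesis plays no role at all, not even the identification role you assign to it.
\end{itemize}
The price is an appeal to Plancherel, which sits uneasily with the paper's stated aim of avoiding Lebesgue-type machinery. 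Proposition~3.1 already relies on it, however, so you stay within the paper's own toolkit.

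One correction to your fallback Route~1: boundedness or piecewise continuity of $f$ on $[-T_1,T_1]$ does not give a bound on $f_{T_1,\Omega}$ that is uniform in $\Omega$. Pulling out $\sup|f|$ leaves $\int_{-T_1}^{T_1}|\sin\Omega(t-x)|/(\pi|t-x|)\,dx$, which grows like $\log\Omega$. The uniform bound on $\int_a^b\sin(\Omega u)/u\,du$ transfers to $\int f(x)\sin\Omega(t-x)/(t-x)\,dx$ only through the second mean value theorem. That requires $f$ to be piecewise monotone or, more generally, of bounded variation on $[-T_1,T_1]$. Under that hypothesis the bound is uniform in $\Omega$ and in $t\in[-T_2,T_2]$, and bounded convergence would then make the paper's direct interchange rigorous.
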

Thus the reconstruction error inside $[-T_2,T_2]$ is completely determined by the energy of the signal outside the truncation interval $[-T_1,T_1]$. In this sense, the time truncation produces a locally optimal reconstruction within the observation window $[-T_2,T_2]$.

\begin{observation}[Conditional admissibility of asymmetric truncation]
Let $f_T \in L^1(\mathbb{R})$ be a truncated function with support $[T_1,T_2]$. Consider the inverse transform
\[
\overleftarrow{f}_T(t) = \lim_{\Omega_1,\Omega_2\to\infty} \frac{1}{2\pi}\int_{-\Omega_1}^{\Omega_2} F_T(\omega)\, e^{i\omega t}\,d\omega
= \lim_{\Omega_1,\Omega_2\to\infty} \frac{1}{2\pi}\int_{-\Omega_1}^{\Omega_2}\left[\int_{-\infty}^{\infty} f_T(x)\, e^{-i\omega x}\,dx\right] e^{i\omega t}\,d\omega.
\]
If both the time truncation and the frequency truncation are asymmetric ($T_1 \ne T_2$ and $\Omega_1 \ne \Omega_2$), the t.g.l.\ inverse transform generally contains a non-vanishing imaginary component. However, if either
\begin{itemize}
\item[(i)] the frequency truncation is symmetric ($\Omega_1 = \Omega_2$), or
\item[(ii)] the time truncation is symmetric ($T_1 = T_2$),
\end{itemize}
the inverse transform converges to $f_T(t)$. Thus asymmetric truncation is admissible for one domain (time or frequency) but not simultaneously for both.
\end{observation}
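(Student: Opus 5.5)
The plan is to reduce the statement to an explicit kernel computation, as in the proof of Theorem~3.1. Since $f_T\in L^1(\mathbb{R})$ has compact support $[T_1,T_2]$ and the $\omega$-range is finite, we may interchange the two integrations. Writing $u=t-x$, this gives
\[
\frac{1}{2\pi}\int_{-\Omega_1}^{\Omega_2} F_T(\omega)e^{i\omega t}\,d\omega
=\int_{T_1}^{T_2} f_T(x)\,K_{\Omega_1,\Omega_2}(t-x)\,dx,
\qquad
K_{\Omega_1,\Omega_2}(u)=\frac{e^{i\Omega_2u}-e^{-i\Omega_1u}}{2\pi i u}.
\]
Splitting the exponentials into cosines and sines yields
\[
K_{\Omega_1,\Omega_2}(u)=\frac{1}{2}\Big[D_{\Omega_1}(u)+D_{\Omega_2}(u)\Big]
-\,i\,\frac{\cos\Omega_2u-\cos\Omega_1u}{2\pi u},
\]
with $D_\Omega(u)=\sin(\Omega u)/(\pi u)$ the Dirichlet kernel of Section~3.3. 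The whole argument then separates into a real (Dirichlet) part and an imaginary (conjugate-kernel) part.

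\emph{Real part.} For each $j$, the classical Dirichlet convergence theorem, exactly as invoked in Theorem~3.1, gives $\int f_T(x)D_{\Omega_j}(t-x)\,dx\to f_T(t)$ at continuity points, independently of the other index. The average of the two therefore tends to $f_T(t)$ however $\Omega_1,\Omega_2\to\infty$.

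\emph{Imaginary part, case (i).} If $\Omega_1=\Omega_2$, the conjugate kernel vanishes identically, so the inverse transform coincides with the symmetric construction of Theorem~3.1. Case (i) then follows at once.

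\emph{Imaginary part, general case.} Define
\[
I(t)=\int_{T_1}^{T_2} f_T(x)\,\frac{\cos\Omega_2(t-x)-\cos\Omega_1(t-x)}{t-x}\,dx .
\]
I would decompose $f_T(x)=f_T(t)+[f_T(x)-f_T(t)]$ for $t$ interior to $[T_1,T_2]$.

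\emph{Remainder term.} Assume a Dini-type (e.g.\ one-sided Lipschitz) condition at $t$, so that $(f_T(x)-f_T(t))/(t-x)$ is integrable. Then each cosine term tends to zero separately by the Riemann--Lebesgue lemma, as in Theorem~3.5.

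\emph{Constant term.} This part equals $f_T(t)$ times $\int_{t-T_2}^{t-T_1}(\cos\Omega_2u-\cos\Omega_1u)\,du/u$. Because $\cos(\Omega u)/u$ is odd, the portion over the symmetric interval $|u|<\min(T_2-t,\,t-T_1)$ cancels exactly. What remains is an integral over a one-sided interval bounded away from $u=0$, which again tends to zero by Riemann--Lebesgue. For case (ii) ($T_1=T_2$ in the paper's notation, i.e.\ support centred on the evaluation window) I would exploit the same odd-symmetry cancellation to make the imaginary part vanish in the limit, independently of the ratio $\Omega_2/\Omega_1$.

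\emph{Expected obstacle.} The delicate step is the first claim, that a genuinely non-vanishing imaginary component appears when both truncations are asymmetric. The only candidate source is the Frullani-type logarithm $\int_0^\infty(\cos\Omega_1u-\cos\Omega_2u)\,du/u=\ln(\Omega_2/\Omega_1)$. In the computation above it is cancelled by the odd symmetry around $u=0$ whenever $t$ is an interior point. So I would first try to locate the non-vanishing contribution in one of two places:
\begin{itemize}
\item at finite $\Omega_1\ne\Omega_2$, where the conjugate kernel produces an $O(1)$ oscillatory imaginary term near the endpoints $T_1,T_2$ (the Ci-function terms $\mathrm{Ci}(\Omega_j|t-T_k|)$);
\item at or near the jump points $t=T_1,T_2$, where the symmetric cancellation fails and the limit is $\pm\frac{i}{2\pi}f_T(T_k^{\pm})\ln(\Omega_2/\Omega_1)$.
\end{itemize}
If the cancellation argument holds in full, the first sentence of the Observation must be read in this restricted (finite-$\Omega$ or endpoint) sense. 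Only the convergence claims (i) and (ii) would then be established in the pointwise limit at interior continuity points.
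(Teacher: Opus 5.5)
Your computation is correct, and its first step is the same as the paper's. After interchanging the integrals and shifting $y=x-t$, the paper also splits the inverse into the sine-sum (Dirichlet) kernel plus $i$ times the cosine-difference kernel. It then only \emph{asserts} two things: that the cosine terms leave a non-vanishing imaginary part when both truncations are asymmetric, and that they cancel when either truncation is symmetric. Your Dini decomposition with odd-symmetry cancellation shows that this assertion is wrong. So drop the hedge ``if the cancellation argument holds in full'': it does hold in full.

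At every $t$ where $f_T$ satisfies a Dini condition, the conjugate-kernel term tends to $0$ for arbitrary independent $\Omega_1,\Omega_2\to\infty$, whatever the time window is. A one-line check: take $f_T=\mathbf{1}_{[0,L]}$ and $0<t<L$, so both truncations are asymmetric. The imaginary part is then exactly $-\frac{i}{2\pi}\bigl[\mathrm{Ci}(\Omega_2t)-\mathrm{Ci}(\Omega_1t)-\mathrm{Ci}(\Omega_2(L-t))+\mathrm{Ci}(\Omega_1(L-t))\bigr]$, which tends to $0$. Hence the first sentence of the Observation is false in the pointwise limit. What you actually proved, convergence to $f_T(t)$ at Dini points for independent $\Omega_1,\Omega_2$, is the correct content.

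Neither of your fallback readings rescues the stated dichotomy, because time symmetry plays no role anywhere.
\begin{itemize}
\item At a jump point of $f_T$ (a window endpoint, or an interior jump of $f$), the same splitting gives, along paths with $\Omega_2/\Omega_1\to r$, the imaginary limit $\frac{i}{2\pi}\bigl[f_T(t^-)-f_T(t^+)\bigr]\ln r$. This formula also pins down the signs in your $\pm$ expression: the limit is $-\frac{i}{2\pi}f(T_1^+)\ln r$ at the left end and $+\frac{i}{2\pi}f(T_2^-)\ln r$ at the right end.
\item This log term is equally present for a symmetric window $[-T,T]$ at $t=\pm T$. So claim (ii) actually fails at the endpoints of a symmetric window.
\item At finite $\Omega_1\neq\Omega_2$, the imaginary part is generically nonzero under symmetric time truncation as well, so the finite-$\Omega$ reading does not separate the cases either.
\end{itemize}
Your case (ii) should therefore be presented as an instance of the general interior computation, not as something that exploits time symmetry. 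For $t\neq0$ the shifted window $[t-T,t+T]$ is not even symmetric about $u=0$. Exact cancellation of the cosine terms would additionally require $f_T(t-u)$ to be even in $u$, so the paper's stated mechanism for (ii) does not work.

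The accurate statement is this: only the frequency condition ($\Omega_1=\Omega_2$, or more generally $\Omega_2/\Omega_1\to1$) matters, and it matters only at discontinuities of $f_T$. Finally, carry the Dini (or local bounded-variation) hypothesis into the real part too. The Dirichlet convergence theorem needs more than continuity at $t$.
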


Consider the truncated inverse transform:
\[
\overleftarrow{f}_T(t) = \lim_{\Omega_1,\Omega_2\to\infty} \frac{1}{2\pi}\int_{-\Omega_1}^{\Omega_2}\left[\int_{-T_1}^{T_2} f(x)\, e^{-i\omega x}\,dx\right] e^{i\omega t}\,d\omega
\]
\begin{multline*}
= \lim_{\Omega_1,\Omega_2\to\infty} \frac{i}{2\pi}\int_{-T_1}^{T_2} f(x)\, \frac{\cos\Omega_2(x-t) - \cos\Omega_1(x-t)}{x-t}\,dx \\
+ \lim_{\Omega_1,\Omega_2\to\infty} \frac{1}{2\pi}\int_{-T_1}^{T_2} f(x)\, \frac{\sin\Omega_2(x-t) + \sin\Omega_1(x-t)}{x-t}\,dx
\end{multline*}
\begin{multline*}
= \lim_{\Omega_1,\Omega_2\to\infty} \frac{i}{2\pi}\int_{-T_1-t}^{T_2-t} f(y+t)\, \frac{\cos\Omega_2 y - \cos\Omega_1 y}{y}\,dy \\
+ \lim_{\Omega_1,\Omega_2\to\infty} \frac{1}{2\pi}\int_{-T_1-t}^{T_2-t} f(y+t)\, \frac{\sin\Omega_2 y + \sin\Omega_1 y}{y}\,dy.
\end{multline*}
If both truncations are asymmetric, the cosine terms generate an imaginary contribution that does not vanish as the limits are taken. If either truncation is symmetric, these terms cancel and the standard sine kernel remains, which yields convergence to $f_T(t)$.

In applications of Fourier analysis to physical systems, we usually take asymmetric time truncation, therefore the frequency-domain truncation should be symmetric. Therefore we assume symmetric frequency-truncation hereafter unless otherwise stated.

\begin{theorem}[Interchangeability of time limit and frequency limit]
Let $f \in L^1(\mathbb{R})$, and
\[
F_T(\omega) = \int_{-\infty}^{\infty} f_T(t)\, e^{-i\omega t}\,dt.
\]
Then the limits associated with time truncation and frequency truncation commute, and the classical Fourier inversion formula holds:
\[
f(t) = \lim_{\Omega\to\infty}\lim_{T\to\infty} \frac{1}{2\pi}\int_{-\Omega}^{\Omega} F_T(\omega)\, e^{i\omega t}\,d\omega = \lim_{T\to\infty}\lim_{\Omega\to\infty} \frac{1}{2\pi}\int_{-\Omega}^{\Omega} F_T(\omega)\, e^{i\omega t}\,d\omega.
\]
\end{theorem}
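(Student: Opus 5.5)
The plan is to compute both iterated limits separately and show that each equals $f(t)$ at a point $t$ of continuity of $f$ (we keep the hypothesis that $f$ is piecewise continuous with one-sided limits, as in Class $\mathcal{A}_1$, and regard $f$ as an $L^1$ function in addition). The key tool is Fubini on the compact frequency interval: since $f_T\in L^1(\mathbb{R})$ and $|e^{i\omega(t-x)}|=1$, the double integral over $[-\Omega,\Omega]\times\mathbb{R}$ is absolutely convergent. Hence for every finite $T$ and $\Omega$
\[
\frac{1}{2\pi}\int_{-\Omega}^{\Omega}F_T(\omega)e^{i\omega t}\,d\omega=\int_{-\infty}^{\infty}f_T(x)\,D_\Omega(t-x)\,dx ,
\]
where $D_\Omega$ is the Dirichlet kernel of Section~3.3. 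Both orders of limits will be evaluated from this single representation.

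\emph{Order $\lim_{T}\lim_{\Omega}$.} This is exactly the content of Theorem~3.1. The inner limit is given by the Dirichlet convergence theorem and equals $f_T(t)$. Since $f_T(t)=f(t)$ once $T>|t|$, the outer limit is $f(t)$. Nothing new is needed here.

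\emph{Order $\lim_{\Omega}\lim_{T}$.} For fixed $\Omega$ the first step is to remove $T$. We have $|F_T(\omega)-F(\omega)|\le\int_{|x|>T}|f(x)|\,dx\to0$ uniformly in $\omega$, because $f\in L^1(\mathbb{R})$. On the bounded interval $[-\Omega,\Omega]$ this uniform convergence gives
\[
\lim_{T\to\infty}\frac{1}{2\pi}\int_{-\Omega}^{\Omega}F_T e^{i\omega t}\,d\omega=\frac{1}{2\pi}\int_{-\Omega}^{\Omega}F(\omega)e^{i\omega t}\,d\omega .
\]
Equivalently, in the $x$-representation the difference is bounded by $(\Omega/\pi)\|f-f_T\|_1$, since $|D_\Omega|\le\Omega/\pi$. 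By the same Fubini step this limit equals $\int f(x)D_\Omega(t-x)\,dx$. The remaining step is the classical Dirichlet/Fourier integral theorem for $f\in L^1(\mathbb{R})$, which I would invoke in the form cited from [3]. I would split the integral into $|x-t|<\delta$ and $|x-t|\ge\delta$.
\begin{itemize}
\item On the far region, $f(x)/(t-x)$ is integrable, so the Riemann--Lebesgue lemma sends the contribution to zero.
\item On the near region, one needs the local behaviour of $f$ at $t$ (continuity together with bounded variation or a Dini condition, as in the hypotheses of Theorem~3.1). The Dirichlet integral $\int_0^{\infty}\sin u/u\,du=\pi/2$ then yields $f(t)$.
\end{itemize}
Both iterated limits therefore equal $f(t)$, and so they commute.

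The main obstacle is the local step. Mere continuity at $t$ is not sufficient for Dirichlet convergence of Fourier integrals, just as for Fourier series, so the regularity assumptions implicit in Theorem~3.1 and the cited Dirichlet theorem must be carried over explicitly. I would state the result as holding at every point $t$ where those Dirichlet conditions hold.

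A second point deserves care: the uniform-in-$\omega$ estimate is precisely what uses $f\in L^1(\mathbb{R})$. It allows $T\to\infty$ to be taken inside a finite frequency band, and it fails for non-integrable $f$. This failure is what explains why the order of limits matters outside $L^1$.
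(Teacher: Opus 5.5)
Your proposal is correct and follows the same two-step route as the paper: for $\lim_\Omega\lim_T$ you pass $T\to\infty$ inside the finite frequency band using $F_T\to F$, and for $\lim_T\lim_\Omega$ you apply Dirichlet inversion to $f_T$ and then let $T\to\infty$. Your uniform bound $\sup_\omega|F_T-F|\le\int_{|x|>T}|f|$ and your explicit Dini or bounded-variation hypothesis at $t$ make precise what the paper leaves implicit when it appeals to ``the classical Fourier inversion theorem''; that hypothesis is inherited by $f_T$ for $T>|t|$, so it also covers the step you attribute to Theorem~3.1, which itself assumes only piecewise continuity.
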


\begin{proof}
Since $f \in L^1(\mathbb{R})$, $\lim_{T\to\infty} F_T(\omega)$ converges to $F(\omega)$, and therefore we can move the limit $T\to\infty$ inside the integration. On the other hand, for each fixed $T$, the classical Fourier inversion theorem applies to $f_T$, yielding
\[
f_T(t) = \lim_{\Omega\to\infty} \frac{1}{2\pi}\int_{-\Omega}^{\Omega} F_T(\omega)\, e^{i\omega t}\,d\omega.
\]
Passing to the limit $T\to\infty$ gives the classical inversion formula for $f$. Thus the order of limits may be interchanged.
\end{proof}

\begin{remarks}
The theorem shows that absolute integrability provides a sufficient condition under which the truncation limits commute. Outside the $L^1$ framework this property generally fails, and the reconstruction of $f$ must therefore be interpreted through ordered limits.
\end{remarks}

We now turn to the central issue of this paper: the behavior of the limits associated with time and frequency truncation.

\begin{theorem}[Non-interchangeability of time limit and frequency limit for inverse transform]
Let $f \notin L^1(\mathbb{R})$ and $f \in L^1_{\mathrm{loc}}(\mathbb{R})$. For the truncated function $f_T$, let
\[
F_T(\omega) = \int_{-\infty}^{\infty} f_T(t)\, e^{-i\omega t}\,dt
\]
be the classical Fourier transform of $f_T$, and define the truncated inverse integral
\[
I_{T,\Omega}(t) = \frac{1}{2\pi}\int_{-\Omega}^{\Omega} F_T(\omega)\, e^{i\omega t}\,d\omega.
\]
Then, in general,
\[
\lim_{T\to\infty}\lim_{\Omega\to\infty} I_{T,\Omega}(t) \ne \lim_{\Omega\to\infty}\lim_{T\to\infty} I_{T,\Omega}(t)
\]
for pointwise convergence of classical improper integrals.
\end{theorem}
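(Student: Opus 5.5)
Because the statement says only that the two iterated limits differ ``in general'', I would prove it with a single explicit counterexample. Take the constant function $f(t)\equiv 1$, which lies in $L^1_{\mathrm{loc}}(\mathbb{R})$ but not in $L^1(\mathbb{R})$, and which belongs to $\mathcal{A}_1$. I will evaluate both iterated limits of $I_{T,\Omega}(t)$ at the point $t=0$.

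\emph{The t.g.l.\ order.} The truncated transform is explicit: $F_T(\omega)=\int_{-T}^{T}e^{-i\omega x}\,dx = 2\sin(\omega T)/\omega$, a bounded continuous function. For fixed $T$, the function $f_T=\mathbf{1}_{[-T,T]}$ is piecewise continuous and compactly supported. The Dirichlet convergence argument used in the proof of Theorem~3.1 therefore gives $\lim_{\Omega\to\infty}I_{T,\Omega}(0)=f_T(0)=1$ for every $T>0$. The outer limit $T\to\infty$ then equals $1=f(0)$.

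\emph{The reversed order.} Here I fix $\Omega$ and let $T\to\infty$ first. Using $F_T$, I would write
\[
I_{T,\Omega}(0)=\frac{1}{\pi}\int_{-\Omega}^{\Omega}\frac{\sin \omega T}{\omega}\,d\omega=\frac{2}{\pi}\int_{0}^{\Omega T}\frac{\sin u}{u}\,du .
\]
As $T\to\infty$ this tends to $\frac{2}{\pi}\cdot\frac{\pi}{2}=1$, so at $t=0$ the two orders would agree. This choice therefore fails as a counterexample.

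\emph{Repairing the counterexample.} The fix is to pick $t$ or $f$ so that $F_T(\omega)$ has no pointwise limit for $\omega\neq0$ and the $\omega$-integral of that oscillation does not settle either. I would take $f(t)=t$, which is still in $L^1_{\mathrm{loc}}$ but not $L^1$ and belongs to $\mathcal{A}_1$. At any $t$, the t.g.l.\ order again gives $f(t)=t$ by Theorem~3.1. In the reversed order, I would compute
\[
\frac{1}{2\pi}\int_{-\Omega}^{\Omega}F_T(\omega)e^{i\omega t}\,d\omega=\int_{-T}^{T}x\,\frac{\sin\Omega(t-x)}{\pi(t-x)}\,dx .
\]
Substituting $x=t-y$ splits this into $t\int_{t-T}^{t+T}\frac{\sin\Omega y}{\pi y}\,dy$ plus $-\frac{1}{\pi}\int_{t-T}^{t+T}\sin(\Omega y)\,dy$. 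The first term tends to $t$ as $T\to\infty$. The second term equals $\frac{1}{\pi\Omega}\bigl[\cos\Omega(t+T)-\cos\Omega(t-T)\bigr]=-\frac{2}{\pi\Omega}\sin(\Omega t)\sin(\Omega T)$. For $t$ with $\sin\Omega t\neq0$, this oscillates in $T$ without a limit. Hence $\lim_{T\to\infty}I_{T,\Omega}(t)$ does not exist for such fixed $\Omega$, so the reversed iterated limit is undefined as a classical pointwise limit while the t.g.l.\ limit equals $t$. This establishes the claimed inequality.

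The main obstacle is the interchange of the $x$- and $\omega$-integrals that produces the Dirichlet-kernel form. It is justified only because $f_T\in L^1$ and the $\omega$-interval is finite, and I would note explicitly that it is applied at fixed $T$ before any limit is taken. I would also remark that $f(t)=e^{\alpha t^2}$ from Example~5, or any polynomially growing $f$, produces the same divergence. This is consistent with the remark after Theorem~3.3 that absolute integrability is exactly what restores commutativity of the limits.
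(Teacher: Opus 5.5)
Your argument is correct and is essentially the paper's proof. You use the same counterexample $f(t)=t$ and isolate the same non-convergent term $-\frac{2}{\pi\Omega}\sin(\Omega t)\sin(\Omega T)$, which is the paper's boundary term $-2\delta_T(\Omega)\sin\Omega t$; you reach it through the Dirichlet-kernel form in $x$, whereas the paper integrates $F_T=2\pi i\,\delta_T'$ by parts in $\omega$.

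You should drop the closing remark, though. Your own $f\equiv 1$ computation, also noted in the paper's Example~1, is a bounded non-$L^1$ function for which the two orders agree. So neither ``any polynomially growing $f$ produces the same divergence'' nor ``absolute integrability is \emph{exactly} what restores commutativity'' holds; the paper's remark claims only that $L^1$ is sufficient.
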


\begin{proof}
For each fixed $T$, $f_T \in L^1(\mathbb{R})$. Therefore the classical Fourier inversion theorem gives
\[
\lim_{\Omega\to\infty} I_{T,\Omega}(t) = f_T(t)
\]
at every continuity point of $f_T$. Taking the outer limit yields
\[
\lim_{T\to\infty}\lim_{\Omega\to\infty} I_{T,\Omega}(t) = \lim_{T\to\infty} f_T(t) = f(t).
\]

Now consider the interchanged order of limits,
\[
\lim_{\Omega\to\infty}\lim_{T\to\infty} I_{T,\Omega}(t).
\]
Let $f(t) = t$. Then we denote truncated $f_T$ as $f_T(t) = t\,\mathbf{1}_{[-T,T]}(t)$. Its Fourier transform is
\[
F_T(\omega) = \int_{-\infty}^{\infty} f_T(t)\, e^{-i\omega t}\,dt = \int_{-T}^{T} t\, e^{-i\omega t}\,dt.
\]
A direct computation shows
\[
F_T(\omega) = \int_{-T}^{T} t\, e^{-i\omega t}\,dt = \left[\frac{t\, e^{-i\omega t}}{-i\omega}\right]_{-T}^{T} - \int_{-T}^{T} \frac{e^{-i\omega t}}{-i\omega}\,dt = i\,\frac{2T\cos\omega T}{\omega} - i\,\frac{2\sin\omega T}{\omega^2}.
\]
Using $\delta_T(\omega) = \sin(\omega T)/\pi\omega$, we obtain:
\[
F_T(\omega) = i\, 2\pi\, \delta_T'(\omega).
\]
Substituting into the inverse integral yields
\[
\overleftarrow{f}_T(t) = i \int_{-\Omega}^{\Omega} \delta_T'(\omega)\, e^{i\omega t}\,d\omega = i\,\Big[\delta_T(\omega)\, e^{i\omega t}\Big]_{-\Omega}^{\Omega} - i\cdot it \int_{-\Omega}^{\Omega} \delta_T(\omega)\, e^{i\omega t}\,d\omega,
\]
where the boundary term
\[
i\,\Big[\delta_T(\omega)\, e^{i\omega t}\Big]_{-\Omega}^{\Omega} = -2\,\delta_T(\Omega)\sin\Omega t,
\]
does not converge as a classical improper integral as $T\to\infty$, and therefore does not converge as $\Omega\to\infty$ after $T\to\infty$. Hence, $\lim_{\Omega\to\infty}\lim_{T\to\infty} I_{T,\Omega}(t)$ fails to exist as a classical improper integral. Therefore $\lim_{T\to\infty}\lim_{\Omega\to\infty} I_{T,\Omega}(t) = f(t)$ but $\lim_{\Omega\to\infty}\lim_{T\to\infty} I_{T,\Omega}(t)$ does not exist in general. Thus the limits are not interchangeable in general.
\end{proof}

\begin{remarks}
The preceding results clarify the structural role of truncation in the Fourier inversion process. When $f \in L^1(\mathbb{R})$, the truncation and frequency limits may be interchanged without affecting pointwise reconstruction. However, outside the $L^1$ setting this interchangeability may fail, as illustrated by the theorem. In such cases the classical inversion integral does not necessarily converge when truncation is removed prior to the frequency limit. This observation indicates that the order in which truncation and frequency limits are taken becomes analytically significant. Motivated by this phenomenon, we introduced an ordered truncation-frequency limiting procedure that provides a stable formulation of pointwise inversion for a broader class of functions beyond $f \in L^1(\mathbb{R})$.
\end{remarks}

We illustrate that the ordered limits $\lim_{T\to\infty}\lim_{\Omega\to\infty}$ required for inverse transform should be reversed as $\lim_{\Omega\to\infty}\lim_{T\to\infty}$ for forward transform. Suppose that a function $F(\omega)$ is given and that we wish to evaluate the convolution
\[
G(\omega) = \frac{1}{2\pi} F(\omega) * H(\omega) = \frac{1}{2\pi}\int_{-\infty}^{\infty} F(\omega-x)\, H(x)\,dx,
\]
where $H(\omega)\leftrightarrow h(t)$ is such that $h(t) \in L^1(\mathbb{R})$, but $F \notin L^1(\mathbb{R})$.

Since the inverse transform of $F$ may not exist as an ordinary integral, we first introduce frequency truncation
\[
F_\Omega(\omega) = F(\omega)\cdot \mathbf{1}_{[-\Omega,\Omega]}(\omega).
\]
Let
\[
f_\Omega(t) = \frac{1}{2\pi}\int_{-\Omega}^{\Omega} F(\omega)\, e^{i\omega t}\,d\omega = \frac{1}{2\pi}\int_{-\infty}^{\infty} F_\Omega(\omega)\, e^{i\omega t}\,d\omega.
\]
Then formally
\[
G_\Omega(\omega) = \int_{-\infty}^{\infty} f_\Omega(t)\, h(t)\, e^{-i\omega t}\,dt.
\]
However, this integral may still diverge because $f_\Omega(t)$ may not be integrable. Therefore we introduce time truncation
\[
G_{\Omega,T}(\omega) = \int_{-T}^{T} f_\Omega(t)\, h(t)\, e^{-i\omega t}\,dt.
\]
Substituting the expression for $f_\Omega(t)$ gives
\[
G_{\Omega,T}(\omega) = \frac{1}{2\pi}\int_{-\Omega}^{\Omega} F_\Omega(x) \int_{-T}^{T} h(t)\, e^{-i(\omega-x)t}\,dt\,dx = \frac{1}{2\pi}\int_{-\Omega}^{\Omega} F_\Omega(x)\, H_T(\omega-x)\,dx,
\]
where
\[
H_T(\omega) = \int_{-T}^{T} h(t)\, e^{-i\omega t}\,dt.
\]
Thus a stable evaluation of $G(\omega)$ requires the ordered limits
\[
G(\omega) = \lim_{\Omega\to\infty}\lim_{T\to\infty} G_{\Omega,T}(\omega) = \lim_{\Omega\to\infty}\lim_{T\to\infty} \int_{-T}^{T} f_\Omega(t)\, h(t)\, e^{-i\omega t}\,dt.
\]
This example illustrates the necessity of the ordered limits in Theorem 3.5.

The novelty here is not merely the failure of interchangeability, but the identification of the ordered-limit structure as an intrinsic feature of Fourier inversion under truncation.

\begin{observation}[Transform of time-truncated functions cannot have a compact support]
For a function $f(t) \in L^1(\mathbb{R})$ and its time-truncated $f_T(t) = f(t)\cdot \mathbf{1}_{[-T,T]}(t)$, the forward transform $F_T(\omega)$ of $f_T(t)$ cannot have a compact support.
\end{observation}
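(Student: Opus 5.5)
The plan is to show that $F_T$ is the restriction to the real axis of an entire function of $\omega$, and then to use the identity theorem for analytic functions. Throughout we tacitly assume that $f_T$ is not the zero function, i.e.\ $f$ does not vanish almost everywhere on $[-T,T]$. Otherwise $F_T\equiv 0$ trivially has compact support, so this nondegeneracy hypothesis must be made explicit in the statement.

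\emph{Step 1 (entire extension).} For complex $\zeta$, define $F_T(\zeta)=\int_{-T}^{T} f(t)\,e^{-i\zeta t}\,dt$. Expanding $e^{-i\zeta t}=\sum_{n\ge0}(-i\zeta t)^n/n!$ on the bounded interval $|t|\le T$, the partial sums are dominated termwise by
\[
\sum_{n\ge0}\frac{|\zeta|^n T^n}{n!}\,|f(t)| = e^{|\zeta|T}\,|f(t)|,
\]
which is integrable on $[-T,T]$ since $f_T\in L^1(\mathbb{R})$. Summation and integration may therefore be interchanged, giving the everywhere-convergent power series
\[
F_T(\zeta)=\sum_{n\ge0}\frac{(-i)^n m_n}{n!}\,\zeta^n, \qquad m_n=\int_{-T}^{T} t^n f(t)\,dt .
\]
The coefficients satisfy $|m_n|\le T^n\|f_T\|_1$, so the series has infinite radius of convergence. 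This is the only place where compact support of $f_T$ is used, and it is exactly what time truncation provides.

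\emph{Step 2 (identity theorem).} Suppose $F_T$ vanished outside some $[-\Omega_0,\Omega_0]$. Then the entire function $F_T(\zeta)$ vanishes on the real half-line $(\Omega_0,\infty)$, a set with accumulation points in $\mathbb{C}$. By the identity theorem, $F_T\equiv0$ on $\mathbb{C}$, and in particular on $\mathbb{R}$. Equivalently, all Taylor coefficients vanish, so every moment satisfies $m_n=0$.

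\emph{Step 3 (conclude $f_T=0$).} This is the step I expect to need care, since the paper avoids Lebesgue machinery. The first route is to invoke the t.g.l.\ Inversion Theorem (Theorem~3.1), or equivalently the classical Dirichlet argument, for the fixed truncation $f_T$. Since $F_T\equiv0$, the inverse integral $\frac{1}{2\pi}\int_{-\Omega}^{\Omega}F_T(\omega)e^{i\omega t}\,d\omega$ is identically zero, so $f_T(t)=0$ at every continuity point. Hence $f$ vanishes on $[-T,T]$ except at its finitely many (per bounded interval) discontinuities, contradicting nondegeneracy. The alternative route, independent of inversion, uses the moments: $m_n=0$ for all $n$ means $\int_{-T}^{T} p(t)f(t)\,dt=0$ for every polynomial $p$. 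By the Weierstrass approximation theorem this extends to every continuous function on $[-T,T]$, which forces the piecewise continuous $f$ to vanish at its continuity points. The main obstacle is merely stating precisely the regularity needed (piecewise continuity, as in Class $\mathcal{A}_1$), so that ``$f_T=0$'' is meaningful pointwise rather than almost everywhere. Note also that the hypothesis $f\in L^1(\mathbb{R})$ is stronger than needed; local integrability on $[-T,T]$ suffices.
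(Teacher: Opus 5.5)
Your proof is correct, and it takes a genuinely different route from the paper's. The paper writes $F_T(\omega)=\frac{1}{2\pi}\int_{-\infty}^{\infty}F(\omega-x)\,\frac{2\sin Tx}{x}\,dx$. It then concludes that, because the kernel $2\sin(Tx)/x$ has unbounded support, $F_T$ must have unbounded support too. That inference is heuristic: a kernel of unbounded support can still produce a convolution that vanishes outside a compact set. Your degenerate case is a counterexample to the observation as written. For $f=\mathbf{1}_{[T+1,T+2]}\in L^1(\mathbb{R})$, the same convolution is identically zero.

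Your argument (entire extension of $F_T$, the identity theorem, then uniqueness from vanishing moments) is the standard Paley--Wiener mechanism that actually justifies the claim. It gives several things the paper's version does not:
\begin{itemize}
\item It exposes the missing hypothesis that $f$ not vanish a.e.\ on $[-T,T]$.
\item It gives a stronger conclusion: the real zeros of $F_T$ are isolated, so $F_T$ cannot vanish on any interval, not just near infinity.
\item It needs only integrability of $f$ on $[-T,T]$. The convolution formula presupposes that $F$ exists, i.e.\ $f\in L^1(\mathbb{R})$. Your argument therefore applies directly to the non-$L^1$ functions the paper actually truncates (constants, polynomials, $e^{\alpha t^2}$).
\end{itemize}
The cost is a short excursion into complex analysis, outside the paper's purely real-variable toolkit.

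In Step~3, prefer your moment/Weierstrass route over the first one. The first route relies on Theorem~3.1's claim of Dirichlet convergence at every continuity point of a piecewise continuous function. That is more than the classical Dirichlet--Jordan theorem delivers, since it needs local bounded variation or a Dini-type condition. All you actually need here is uniqueness, and the moment argument supplies it cleanly. For a general $f\in L^1(\mathbb{R})$, combine the moment argument with the standard fact that $\int_{-T}^{T}g\,f\,dt=0$ for all continuous $g$ forces $f=0$ a.e.\ on $[-T,T]$. That conclusion is exactly the negation of your nondegeneracy hypothesis.
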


The forward transform of $f_T(t)$ is given by a convolutional integral:
\[
F_T(\omega) = \frac{1}{2\pi}\int_{-\infty}^{\infty} F(\omega-x)\, \frac{2\sin(Tx)}{x}\,dx,
\]
where $2\sin(Tx)/x \leftrightarrow \mathbf{1}_{[-T,T]}(t)$.

Since $2\sin(Tx)/x$ spreads in the $x$-domain infinitely, $F_T(\omega)$ cannot have a compact support in the frequency domain. In other words, time truncation of a function produces a Fourier transform that extends over the entire frequency axis.

\begin{remarks}
In general, time truncation produces oscillatory spectral spreading over the entire frequency axis, and the resulting transform $F_T(\omega)$ does not necessarily remain absolutely integrable.
\end{remarks}

\begin{theorem}[Pointwise reconstruction --- detailed proof for continuously differentiable functions]
This theorem provides a detailed proof of the t.g.l.\ inversion formula (Theorem~3.1) for the special case where $f \in L^1_{\mathrm{loc}}(\mathbb{R})$ is continuously differentiable, which constitutes a subclass of $\mathcal{A}_1$. The proof makes the role of the Dirichlet kernel and the Riemann--Lebesgue lemma explicit.

Let $f\in L^1_{\mathrm{loc}}(\mathbb{R})$ be continuously differentiable in an interval containing $t$, and let
\[
f_T(t) = f(t)\cdot \mathbf{1}_{[-T_1,T_2]}(t)
\]
be the truncated function with $T_1 < T_2$. Then define the forward transform as:
\[
F_T(\omega) = \int_{-\infty}^{\infty} f_T(t)\, e^{-i\omega t}\,dt.
\]
Here we should note that the function is truncated but the time-domain is not truncated and is defined on the whole line.

Then we define the frequency-truncated inverse transform of $f_T$ as:
\[
\overleftarrow{f}_{T,\Omega}(t) = \frac{1}{2\pi}\int_{-\Omega}^{\Omega} F_T(\omega)\, e^{i\omega t}\,d\omega = \frac{1}{2\pi}\int_{-\Omega}^{\Omega}\left[\int_{-\infty}^{\infty} f_T(x)\, e^{-i\omega x}\,dx\right] e^{i\omega t}\,d\omega.
\]
The t.g.l.\ inverse transform converges pointwise:
\[
\lim_{T\to\infty}\lim_{\Omega\to\infty} \overleftarrow{f}_{T,\Omega}(t) = \lim_{T\to\infty}\lim_{\Omega\to\infty} \frac{1}{2\pi}\int_{-\Omega}^{\Omega} F_T(\omega)\, e^{i\omega t}\,d\omega = f(t).
\]
\end{theorem}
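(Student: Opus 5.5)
The plan is to reduce the inner limit to a Dirichlet integral and then handle it with the Riemann--Lebesgue lemma. The outer limit $T\to\infty$ should then be trivial.

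Fix $t$ and a truncation $[-T_1,T_2]$ with $-T_1<t<T_2$. This costs nothing, since we eventually let $T_1,T_2\to\infty$. Because $f_T\in L^1(\mathbb{R})$ has compact support and the $\omega$-range $[-\Omega,\Omega]$ is finite, the integrand $f_T(x)e^{i\omega(t-x)}$ is absolutely integrable on $[-T_1,T_2]\times[-\Omega,\Omega]$. Fubini (or, in the Riemann spirit of the paper, interchange of two proper integrals over a rectangle) therefore gives
\[
\overleftarrow{f}_{T,\Omega}(t)=\int_{-T_1}^{T_2} f(x)\,D_\Omega(t-x)\,dx=\frac{1}{\pi}\int_{-T_1-t}^{T_2-t} f(t+y)\,\frac{\sin\Omega y}{y}\,dy ,
\]
with $D_\Omega$ the Dirichlet kernel of Section~3.3. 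Next, split the $y$-integral at $\pm\delta$, where $\delta>0$ is small enough that $[t-\delta,t+\delta]$ lies in the interval on which $f$ is $C^1$ and inside $(-T_1,T_2)$.

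On the outer pieces $y\in[-T_1-t,-\delta]\cup[\delta,T_2-t]$, the function $f(t+y)/y$ is integrable on a bounded set. The Riemann--Lebesgue lemma then sends these contributions to $0$ as $\Omega\to\infty$. On the inner piece, write $f(t+y)=f(t)+\bigl(f(t+y)-f(t)\bigr)$. The quotient $\bigl(f(t+y)-f(t)\bigr)/y$ extends continuously to $y=0$ with value $f'(t)$, by the mean value theorem. It is therefore bounded on $[-\delta,\delta]$, and Riemann--Lebesgue again kills its contribution.

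What remains is $\frac{f(t)}{\pi}\int_{-\delta}^{\delta}\frac{\sin\Omega y}{y}\,dy=\frac{f(t)}{\pi}\int_{-\Omega\delta}^{\Omega\delta}\frac{\sin u}{u}\,du$. This tends to $f(t)$ by the classical improper Riemann integral $\int_{-\infty}^{\infty}\frac{\sin u}{u}\,du=\pi$. Hence $\lim_{\Omega\to\infty}\overleftarrow{f}_{T,\Omega}(t)=f_T(t)=f(t)$ for every truncation containing $t$ in its interior. The outer limit $T_1,T_2\to\infty$ is then immediate, because the sequence is eventually constant equal to $f(t)$, exactly as in the proof of Theorem~3.1.

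The main obstacle is justifying the steps without Lebesgue machinery, in keeping with the paper's aims. Two points need care. The first is the interchange of integration order, which is harmless on a bounded rectangle for Riemann-integrable $f_T$. The second is that Riemann--Lebesgue must be applied in the form valid for absolutely (improper) Riemann-integrable functions on bounded intervals; $f(t+y)/y$ is not singular on the outer pieces. I would prove the needed version by approximating with step functions, for which $\int_a^b\sin\Omega y\,dy=O(1/\Omega)$ is explicit. Finally, I would note that the asymmetry $T_1\neq T_2$ plays no role, in line with Observation~3.1(ii) and its symmetric-frequency-truncation convention.
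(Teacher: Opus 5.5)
Your proof is correct and follows essentially the same route as the paper's: Fubini on the bounded rectangle to produce the Dirichlet kernel, the decomposition $f(t+y)=f(t)+y\,h(y)$ with $h(y)=[f(t+y)-f(t)]/y$ removed by the Riemann--Lebesgue lemma, the Dirichlet integral $\int_{-\infty}^{\infty}\sin u/u\,du=\pi$ for the remaining term, and an outer limit in $T$ that is eventually constant. Your splitting at $\pm\delta$ is a tidy refinement rather than a different argument: the paper applies the decomposition over the whole interval $[-T_1-t,\,T_2-t]$ and calls $h$ bounded, but away from $y=0$ only integrability of $h$ is available and needed, which is exactly what your outer-piece argument uses. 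One minor slip: the symmetric-frequency case you invoke is Observation~3.1(i), not (ii).
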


\begin{proof}
Define the truncated inverse transform:
\[
\overleftarrow{f}_{T,\Omega}(t) = \frac{1}{2\pi}\int_{-\Omega}^{\Omega}\left[\int_{-\infty}^{\infty} f_T(x)\, e^{-i\omega x}\,dx\right] e^{i\omega t}\,d\omega.
\]
Since $f_T \in L^1(\mathbb{R})$ and the frequency interval is truncated within a finite interval, we have
\[
\frac{1}{2\pi}\int_{-\Omega}^{\Omega}\int_{-\infty}^{\infty} \left|f_T(x)\right| dx\,d\omega = \frac{\Omega}{\pi}\int_{-\infty}^{\infty} \left|f_T(x)\right| dx < \infty.
\]
Then from Fubini's theorem, we can interchange the order of integrations. Integration over $\omega$ yields:
\[
\overleftarrow{f}_{T,\Omega}(t) = \frac{1}{2\pi}\int_{-\infty}^{\infty} \frac{\sin\Omega(x-t)}{x-t}\,\cdot 2\, f_T(x)\,dx = \frac{1}{\pi}\int_{T_1}^{T_2} \frac{\sin\Omega(x-t)}{x-t}\, f(x)\,dx.
\]
Then we obtain:
\[
\lim_{\Omega\to\infty} \frac{1}{\pi}\int_{T_1}^{T_2} \frac{\sin\Omega(x-t)}{x-t}\, f(x)\,dx = \lim_{\Omega\to\infty} \frac{1}{\pi}\int_{T_1-t}^{T_2-t} \frac{\sin\Omega y}{y}\, f(y+t)\,dy
\]
\[
= \lim_{\Omega\to\infty} \frac{1}{\pi}\int_{T_1-t}^{T_2-t} \sin(\Omega y)\, h(y)\,dy + \lim_{\Omega\to\infty} \frac{1}{\pi}\, f(t) \int_{T_1-t}^{T_2-t} \frac{\sin\Omega y}{y}\,dy,
\]
where $h(y) = [f(y+t) - f(t)]/y$. Considering that $h(y)$ is bounded locally and $\lim_{y\to0} h(y) = f'(t)$ is also bounded, the first integral term tends to zero as $\Omega \to \infty$ by the Riemann--Lebesgue lemma. The second term converges to $f_T(t)$
\[
f_T(t) =
\begin{cases}
f(t) & T_1 < t < T_2\ \ (T_1,T_2>0) \\
0 & t<T_1,\ t>T_2,
\end{cases}
\]
where we used $\int_{-\infty}^{\infty} \sin(x)/x\,dx = \pi$. Consequently we obtain the original $f$ in a pointwise sense:
\[
\lim_{T_1,T_2\to\infty}\lim_{\Omega\to\infty} \overleftarrow{f}_{T,\Omega}(t) = \lim_{T_1,T_2\to\infty} f_T(t) = f(t) \qquad (-\infty < t < \infty).
\]
\end{proof}

\begin{remarks}
The truncate-and-generalized-limit formulation may also be interpreted in terms of approximate identities. After substituting the truncated Fourier transform into the inverse integral, the reconstruction formula can be written as a convolution with the kernel $\sin(\Omega x)/(\pi x)$ over a truncated interval. Thus the inversion process is governed by a two-parameter family of kernels depending on both the time truncation parameter $T$ and the frequency truncation parameter $\Omega$.

The kernel
\[
\delta_\Omega(t) = \frac{\sin\Omega t}{\pi t}
\]
arises from symmetric frequency truncation and is the classical Dirichlet-type kernel that underlies Fourier inversion. It possesses oscillatory structure and infinite support.

The inverse transform obtained in the present truncation-based formulation admits a natural interpretation in terms of convolution with a Dirichlet-type kernel. In particular, the inversion formula may be expressed in the form
\begin{equation}
\overleftarrow{f}(t) = \lim_{\Omega\to\infty} \int_{-T}^{T} f(x)\, \frac{\sin\Omega(t-x)}{\pi(t-x)}\,dx.
\label{eq:6}
\end{equation}
which represents reconstruction of the function through convolution with a delta-convergent kernel.

If we denote $f(t)=x(t)$ and $\overleftarrow{f}(t)=y(t)$, Eq.~(\ref{eq:6}) expresses the input-output relation of a linear time-independent signal system. From this viewpoint, the inverse transform may be interpreted as reconstruction through a finite-bandwidth observation operator acting on the signal. In this sense, the inversion formula describes reconstruction through a physically interpretable monitoring operator rather than through an abstract duality pairing. In practical measurement systems, reconstruction of signals is typically performed through such band-limited responses rather than through idealized pointwise inversion. The present formulation therefore preserves the transition structure between finite-bandwidth reconstruction and exact inversion as the truncation parameter tends to infinity.

This interpretation highlights a conceptual distinction between the truncation-based formulation and the purely distributional inversion formula. While the distributional Fourier transform provides an immediate representation of the inverse transform, the present approach explicitly exhibits the intermediate reconstruction mechanism through convolution with Dirichlet-type kernels. These kernels may be regarded as constructive realizations of localization processes arising in signal reconstruction and quantum-mechanical observation models, where impulse-type responses are obtained through limiting processes applied to band-limited approximations rather than as idealized distributions.

The Fourier inversion formula is established pointwise at every point of continuity for functions admitting truncation-based inversion limits using general asymmetric truncations in the time variable. The proof relies on oscillatory cancellation arising from truncated exponential kernels and does not require a priori principal-value regularization or explicit distributional pairing with test functions in the definition of the inverse transform.

The t.g.l.\ inverse transform is asymmetric to the forward transform in the sense that the former converges pointwise and the forward transform does not for functions without $L^1(\mathbb{R})$. Thus, the asymmetry of the t.g.l.\ framework is not only structural but also concerns the mode of convergence: the forward transform is defined as a first-order generalized-limit family, while the inverse transform aims at pointwise reconstruction. This distinction clarifies the relation to the classical Fourier series approach in Appendix A, where convergence of coefficients is required. In contrast, the present formulation separates the two roles: generalized-limit convergence for the forward transform, and localized pointwise convergence for the inverse transform.

It should be noted that sharp truncation introduces discontinuities at the cutoff points and may produce oscillatory artifacts on the Fourier transform side, such as Gibbs-type effects. Consequently, the truncated transform $F_T(\omega)$ does not, in general, converge to the Fourier transform in an absolute norm such as $L^1(\mathbb{R})$, even as the truncation points tend to infinity. In particular, the oscillatory error generated by the cutoff may remain significant when measured through absolute integration. However, the present t.g.l.\ formulation does not require such absolute convergence of $F_T(\omega)$. Instead, the truncated transform is used only within ordinary oscillatory integrals, where positive and negative contributions may cancel during the integration process. In particular, the inverse transform applies a second integration in the frequency domain, which produces a localization kernel. Through this localization mechanism, the oscillatory effects caused by sharp truncation are not required to vanish as $L^1$-type approximation errors, but are absorbed through cancellation in the ordered limiting process. Thus, the defect of sharp truncation remains at the level of absolute Fourier-side approximation, but does not affect the operational convergence that defines the t.g.l.\ inverse transform.
\end{remarks}

\begin{theorem}[Fourier transform of differentiated functions]
The t.g.l.\ (inverse) Fourier transform of differentiated functions justifies the Fourier transform pair as:
\[
f^{(n)}(t) \leftrightarrow (i\omega)^n F(\omega), \qquad t^n f(t) \leftrightarrow (-i)^{-n} F^{(n)}(\omega).
\]
\end{theorem}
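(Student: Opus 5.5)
The plan is to prove both pairs at the level of the truncated objects, where every integral is an ordinary Riemann integral over a bounded interval, and only then remove the truncations in the order fixed by Definition~3.2. The mechanism is the t.g.l.\ integration by parts of Theorem~2.1.

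\emph{Derivative pair, case $n=1$.} Let $f$ be piecewise $C^1$ and consider the truncated derivative family. Apply Theorem~2.1 with $g(t)=e^{-i\omega t}$; this is smooth and bounded, so it is admissible as the auxiliary function. Reading the proof of Theorem~2.1 as written, the classical derivative $f'$ on $[-T_1,T_2]$ together with $\delta_{\mathrm{trc}}$ gives
\[
\int_{-\infty}^{\infty} f'_{T}(t)\,e^{-i\omega t}\,dt = -\int_{-\infty}^{\infty} f_T(t)\,\frac{d}{dt}e^{-i\omega t}\,dt = i\omega\,F_T(\omega).
\]
The boundary terms $f(T_2)e^{-i\omega T_2}$ and $f(-T_1)e^{i\omega T_1}$ cancel against the delta contributions exactly, for every finite $T$. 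So $(f')_T^{\mathrm{tgl}}$ has transform $i\omega F_T(\omega)$ with no residual terms.

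\emph{Passing to the limit.} The forward meaning of this identity comes from pairing with an auxiliary $G$ as in Theorem~3.2. The inverse meaning comes from Theorem~3.1: for fixed $T$, the truncated derivative family is the derivative of a compactly supported piecewise continuous object, and the inner $\Omega$-limit of $\frac{1}{2\pi}\int_{-\Omega}^{\Omega} i\omega F_T(\omega)e^{i\omega t}\,d\omega$ recovers $f'(t)$ at interior continuity points once $T>|t|$. The outer limit $T\to\infty$ is then a set-inclusion statement, exactly as in the proof of Theorem~3.1. Iterating $n$ times, with Theorem~2.1 applied to $f^{(k)}$, gives $(i\omega)^nF(\omega)$.

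\emph{Moment pair.} This is the dual statement. Since $f_T$ has compact support, $F_T$ is entire, and differentiation under the integral sign over the bounded interval $[-T_1,T_2]$ is classical:
\[
F_T^{(n)}(\omega)=\int_{-T_1}^{T_2}(-it)^n f(t)e^{-i\omega t}\,dt .
\]
Hence the truncation of $t^nf(t)$ has transform $(-i)^{-n}F_T^{(n)}(\omega)=i^nF_T^{(n)}(\omega)$. Here no boundary terms arise at all, because truncation is in $t$ while differentiation is in $\omega$; the example $f(t)=t$ in Theorem~3.4, where $F_T=2\pi i\,\delta_T'$, is the case $n=1$, $f=1$. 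Inversion then follows from Theorem~3.1 applied directly to $t^nf(t)\in\mathcal{A}_1$, with the ordered limits $\Omega\to\infty$ first, then $T\to\infty$.

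\emph{Main obstacle.} The hard step is the inverse side of the derivative pair. The frequency-truncated integral of $i\omega F_T(\omega)e^{i\omega t}$ must not be integrated by parts in $\omega$ before $\Omega\to\infty$, because that reintroduces boundary terms like $\delta_T(\Omega)\sin\Omega t$ of Theorem~3.4. I would avoid this by never transferring the derivative to the kernel in $\omega$. Instead, use Fubini for fixed $T,\Omega$ as in Theorem~3.5 to write the integral as $\int f'_T(x)D_\Omega(t-x)\,dx$ plus the delta-term contributions $f(T_2)D_\Omega(t-T_2)-f(-T_1)D_\Omega(t+T_1)$. These terms vanish as $\Omega\to\infty$ for fixed $t$ in the interior, since $|D_\Omega(s)|\le 1/(\pi|s|)$ is not enough and one needs the Riemann--Lebesgue lemma for the isolated atoms. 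If that fails, I would fall back on interpreting them as a generalized limit paired with an auxiliary function, in the sense of Theorem~3.2.
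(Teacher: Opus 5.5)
Your forward computation (Theorem~2.1 with $g(t)=e^{-i\omega t}$, boundary terms absorbed by $\delta_{\mathrm{trc}}$, then iteration) and your moment pair (differentiation under the integral over $[-T_1,T_2]$) are the paper's proof. Your moment pair is in fact tighter than the paper's: $(t^nf)_T=t^nf_T$ exactly, so Theorem~3.1 applies to it with no boundary family at all.

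\textbf{The gap: the inverse side of the derivative pair.} The object whose transform is exactly $i\omega F_T$ is $(f_T)'=(f')_T+\delta_{\mathrm{trc}}$. This contains Dirac atoms, and the remark after Theorem~3.1 places such objects outside $\mathcal{A}_1\cup\mathcal{A}_2$. So Theorem~3.1 cannot be invoked for it.

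Nor do the atom contributions vanish as $\Omega\to\infty$. For fixed $s\neq0$, $D_\Omega(s)=\sin(\Omega s)/(\pi s)$ oscillates with fixed amplitude $1/(\pi|s|)$. There is no Riemann--Lebesgue lemma for atoms: the lemma needs an integrable density, whereas convolving a point mass with $D_\Omega$ is mere evaluation, with nothing to average out. Concretely, for $f(t)=t$ at $t=0$,
\[
\frac{1}{2\pi}\int_{-\Omega}^{\Omega} i\omega F_T(\omega)\,d\omega=\frac{1}{\pi}\int_{-\Omega T}^{\Omega T}\frac{\sin u}{u}\,du-\frac{2}{\pi}\sin\Omega T .
\]
The first term tends to $f'(0)=1$. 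The second is the atom part $-T\bigl[D_\Omega(T)+D_\Omega(-T)\bigr]$, which oscillates with amplitude $2/\pi$ independent of $T$. So the inner limit in your ``Passing to the limit'' step does not exist, and the outer limit cannot repair it.

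Avoiding integration by parts in $\omega$ does not help either. This term is exactly the $t$-derivative at $t=0$ of the boundary term $-2\delta_T(\Omega)\sin\Omega t$ of Theorem~3.4. The factor $\Omega$ that $d/dt$ brings down cancels that term's $1/\Omega$ decay.

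\textbf{The fix.} Your fallback is the correct argument and should replace the pointwise claim. Pointwise recovery of $f'$ comes from Theorem~3.1 applied to $f'$ itself (when $f'\in\mathcal{A}_1$), that is, to the family
\[
\int_{-T_1}^{T_2} f'(t)\,e^{-i\omega t}\,dt=i\omega F_T(\omega)+f(T_2)\,e^{-i\omega T_2}-f(-T_1)\,e^{i\omega T_1}.
\]
Identifying the limit of this family with $i\omega F$ holds only as a second-order generalized limit. Paired with $G$ as in Theorem~3.2, the boundary family gives $f(T_2)g(T_2)-f(-T_1)g(-T_1)$, which tends to $0$ exactly when $f(t)g(t)\to0$ as $|t|\to\infty$. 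That decay condition on the auxiliary function is the real content of the paper's phrase ``the boundary term vanishes automatically''.

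The same caveat applies to your iteration. Theorem~2.1 applied to $f^{(k)}$ relates $(f^{(k+1)})_T$ to $(f^{(k)})_T$. Hence $(f^{(n)})_T$ has transform $(i\omega)^nF_T$ plus the families $(i\omega)^{n-1-k}\bigl[f^{(k)}(T_2)e^{-i\omega T_2}-f^{(k)}(-T_1)e^{i\omega T_1}\bigr]$, $0\le k\le n-1$. Each of these vanishes only in the pairing sense, under the condition $f^{(k)}g^{(n-1-k)}\to0$ at infinity. ``No residual terms'' is exact only for $(f_T)^{(n)}$, which contains derivatives of $\delta$ and lies outside the classes covered by Theorem~3.1.
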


\begin{proof}
If we apply the ordinary integration-by-parts technique to $f'(t)$ as follows:
\[
\lim_{T\to\infty} \int_{-T}^{T} f'(t)\, e^{-i\omega t}\,dt = \lim_{T\to\infty} \Big[f(t)\, e^{-i\omega t}\Big]_{-T}^{T} - \lim_{T\to\infty} (-i\omega)\int_{-T}^{T} f(t)\, e^{-i\omega t}\,dt.
\]
The first term of the right-hand side, i.e.\ the boundary term, vanishes automatically with the t.g.l.\ integral (Theorem 2.1). Thus we have $f'(t)\leftrightarrow i\omega F(\omega)$ where $f(t)\leftrightarrow F(\omega)$. Then repeating this up to $n$-times differentiation yields those formulations.
\end{proof}

\begin{remarks}
This fact justifies the result obtained by differentiating the inverse Fourier transform equation as
\[
\frac{d}{dt} f(t) = \frac{1}{2\pi}\int_{-\infty}^{\infty} \frac{d}{dt}\Big[F(\omega)\, e^{i\omega t}\Big] d\omega = \frac{1}{2\pi}\int_{-\infty}^{\infty} F(\omega)\, \frac{\partial}{\partial t} e^{i\omega t}\,d\omega = \frac{1}{2\pi}\int_{-\infty}^{\infty} i\omega\, F(\omega)\, e^{i\omega t}\,d\omega,
\]
i.e., $f'(t)\leftrightarrow i\omega F(\omega)$.

Similarly, we have:
\[
\frac{d^n}{d\omega^n} F(\omega) = \lim_{T\to\infty} \int_{-T}^{T} f(t)\, \frac{\partial^n}{\partial\omega^n} e^{-i\omega t}\,dt = \lim_{T\to\infty} \int_{-T}^{T} (-it)^n f(t)\, e^{-i\omega t}\,dt.
\]
Because $f_T(t)$ and $g(t)$ are ordinary functions, the present formulation does not require the operator-based framework of distribution theory for its definition.
\end{remarks}
\subsection{Operational meaning of generalized convergence}

In many applications, functions do not appear in isolation, but through their action under integral operations such as convolution, filtering, or system response. For example, the output $y(t)$ of a linear system is given by
\[
y(t) = f(t) * g(t) = \int_{-\infty}^{\infty} f(\tau)\, g(t-\tau)\,d\tau,
\]
where $g$ represents an input signal or test function. In such situations, it is not the pointwise behavior of the Fourier transform that is essential, but rather its behavior when integrated against admissible functions. This viewpoint is consistent with the distributional framework, in which functions are characterized through their action on test functions rather than pointwise values. From this perspective, the forward transform need only be meaningful in an operational sense, i.e., through expressions of the form
\[
\frac{1}{2\pi}\int_{-\infty}^{\infty} F_T(\omega)\, G^{*}(\omega)\,d\omega
\]
rather than as a pointwise-defined function. The above generalized limit converges for a well defined $G(\omega)$.

A related mechanism explains the pointwise convergence of the inverse transform. The inverse operation introduces a second integration in the frequency variable:
\[
f_{T,\Omega}(t) = \frac{1}{2\pi}\int_{-\Omega}^{\Omega} F_T(\omega)\, e^{i\omega t}\,d\omega.
\]
Substituting the truncated transform yields
\[
f_{T,\Omega}(t) = \frac{1}{2\pi}\int_{-\Omega}^{\Omega}\left[\int_{-\infty}^{\infty} f_T(x)\, e^{-i\omega x}\,dx\right] e^{i\omega t}\,d\omega
= \int_{-\infty}^{\infty}\left[\int_{-\Omega}^{\Omega} e^{-i\omega x}\, e^{i\omega t}\,d\omega\right] f_T(x)\,dx.
\]
The inner integral defines the Dirichlet kernel,
\[
K_\Omega(t-x) = \frac{\sin\Omega(t-x)}{\pi(t-x)}
\]
so that
\[
f_{T,\Omega}(t) = \int_{-\infty}^{\infty} f_T(x)\, K_\Omega(t-x)\,dx.
\]
This shows that the inverse transform is equivalent to a convolution with a family of kernels forming an approximate identity, a classical concept in Fourier analysis. As $\Omega\to\infty$, the kernel localizes near $x=t$ as a Dirac delta convergent function sequence, yielding pointwise reconstruction under suitable conditions.

The inverse transform does not require the pointwise convergence of the forward transform. Instead, the second integration converts the generalized spectral expression into a localization process. This explains why a forward transform that may not converge pointwise can still lead to a well-defined inverse reconstruction.

From a physical viewpoint, this corresponds to the distinction between a spectral density and a physical observable. The forward transform represents the signal as density in the frequency domain (e.g., voltage per unit frequency), while the inverse transform integrates this spectral density quantity over frequency to recover a physical time-domain quantity (voltage). Thus, the convergence required in applications is typically the convergence of the resulting integral or system output, rather than pointwise convergence of the spectral representation itself. The present formulation is also related to classical approaches based on approximate identities and convolution operators, but emphasizes the operational interpretation: the forward transform is defined as a first-order generalized-limit family, while the inverse transform provides pointwise reconstruction through localization.

The localization mechanism of the Dirichlet kernel is discussed in Section 6.2.

\subsection{Constructive interpretation of generalized Fourier transforms}

The truncate-and-generalized-limit (t.g.l.) formulation differs conceptually from the distributional Fourier transform, not by introducing a new generalized-function space, but by preserving the entire construction within the framework of ordinary classical analysis.

For each finite truncation parameter, the truncated functions $f_{T,\varepsilon}(t)$ belong to $L^1(\mathbb{R})$, and therefore their Fourier transforms
\[
F_{T,\varepsilon}(\omega) = \int_{-\infty}^{\infty} f_{T,\varepsilon}(t)\, e^{-i\omega t}\,dt.
\]
are ordinary Fourier integrals of ordinary functions.

Consequently, all operations used in the present formulation, pointwise multiplication, integration, differentiation, integration by parts, and limiting procedures, are performed entirely within ordinary classical analysis.

A central motivation of the present formulation is that, for functions outside the classical $L^1(\mathbb{R})$ framework, the forward Fourier transform may fail to converge pointwise as an ordinary function when the truncation is removed. Nevertheless, for each finite truncation, the transforms $F_{T,\varepsilon}(\omega)$ remain ordinary well-defined functions, and the family $F_{T,\varepsilon}$ may still possess a generalized limit in an operational sense. More precisely, for an auxiliary function $G(\omega)$, one may consider
\[
\lim_{T\to\infty}\lim_{\varepsilon\to0} \frac{1}{2\pi}\int_{-\infty}^{\infty} F_{T,\varepsilon}(\omega)\, G^{*}(\omega)\,d\omega.
\]
If this limit exists, the forward transform is said to converge in the t.g.l.\ sense, even when pointwise convergence of $F_{T,\varepsilon}(\omega)$ fails.

Let us investigate the convergence. The above t.g.l.\ integral can be expressed:
\[
\lim_{T\to\infty}\lim_{\varepsilon\to0} \frac{1}{2\pi}\int_{-\infty}^{\infty} F_{T,\varepsilon}(\omega)\, G^{*}(\omega)\,d\omega = \lim_{T\to\infty}\lim_{\varepsilon\to0} \frac{1}{2\pi}\int_{-\infty}^{\infty}\int_{-\infty}^{\infty} F_{T,\varepsilon}(\omega)\, g(t)\, e^{i\omega t}\,dt\,d\omega.
\]
If $g$ belongs to $L^1(\mathbb{R})$, we have
\[
\lim_{T\to\infty}\lim_{\varepsilon\to0} \int_{-\infty}^{\infty}\int_{-\infty}^{\infty} F_{T,\varepsilon}(\omega)\, g(t)\, e^{i\omega t}\,dt\,d\omega = \lim_{T\to\infty}\lim_{\varepsilon\to0} \int_{-\infty}^{\infty} f_{T,\varepsilon}(t)\, g(t)\,dt.
\]
This limit exists for well defined pairing functions $g$.

It is important to emphasize that the auxiliary function $G(\omega)$ introduced here is not part of the definition of the forward transform itself. The ordinary Fourier transforms $F_{T,\varepsilon}(\omega)$ are first constructed independently as classical functions, and the auxiliary function is introduced only afterward to examine convergence of the already-defined family of transforms. This is conceptually different from the distributional framework. Although the resulting generalized limit may coincide with the tempered-distribution Fourier transform when the auxiliary functions belong to the Schwartz space, the construction itself remains entirely within classical analysis.

The inverse transform exhibits a fundamentally different behavior. Although the forward transform may exist only in the t.g.l.\ sense, the inverse transform yields pointwise reconstruction through the ordered limits.

Consider a Fourier transformed function $G(\omega)$ in the reciprocal dual $\omega$-domain,
\[
G(\omega) = \exp(-i\omega t),
\]
for the previous t.g.l.\ integral, we have the pointwise convergent inverse transform as:
\[
\lim_{T\to\infty}\lim_{\varepsilon\to0} \frac{1}{2\pi}\int_{-\infty}^{\infty} F_{T,\varepsilon}(\omega)\, G^{*}(\omega)\,d\omega = \lim_{T\to\infty}\lim_{\varepsilon\to0} \frac{1}{2\pi}\int_{-\infty}^{\infty} F_{T,\varepsilon}(\omega)\, e^{i\omega t}\,d\omega
= \lim_{T\to\infty}\lim_{\varepsilon\to0} f_{T,\varepsilon}(t) = f(t).
\]
This is a case of the t.g.l.\ integral which does not converge pointwise but the second t.g.l.\ integral, in its reciprocal dual domain, converges.

Infinity and singular points are not, in themselves, evaluable mathematical objects: no direct procedure assigns a function's value at an unbounded point or at a locally non-integrable singularity, because no such value exists prior to some approach-based construction. Mathematical meaning is therefore necessarily, not merely conventionally, established through a limiting operation of some kind. This principle underlies classical improper integrals, principal-value and finite-part integrals, sequential generalized-function constructions, and distributional duality alike; the present t.g.l.\ formulation instances the same necessity through finite-domain truncation of the target functions themselves, followed by ordered generalized limits of classically well-defined Fourier integrals.

\subsection{Integral Structures and Ordered-Limit Mechanisms}

From the viewpoint of integral operations, distribution theory introduces generalized Fourier meaning through convergent dual pairings with test functions before Fourier integration, whereas sequential approaches introduce generalized meaning through auxiliary regularizing sequences, such as decaying factors or smoothing functions, before ordinary Fourier integration is performed.

In contrast, the present t.g.l.\ formulation preserves the original target functions themselves within finite truncation domains, and introduces generalized Fourier meaning only after ordinary Fourier integrations are completed through ordered dual-domain limits. Although the truncation of target functions by a finite parameter $T$ preserves ordinary Fourier integrability, it may introduce artificial discontinuities at the truncation boundaries $t=\pm T$, even when the original target functions are smooth over the entire real line. Therefore, at finite truncation stages, direct differentiation of the truncated functions may generate boundary contributions, and differential identities obtained through integration by parts may not immediately coincide with those of the original untruncated functions.

However, in the present t.g.l.\ formulation, this apparent difficulty is resolved by the ordered generalized limit $T$ tending to infinity. As the truncation boundaries move toward infinity, the boundary contributions generated at finite truncation stages are automatically transferred to infinity and vanish through the ordinary inverse Fourier localization process.

At the same time, the explicit truncation produces an important constructive advantage. When integration by parts is applied to non-$L^1$, oscillatory, or locally singular functions, potentially divergent boundary terms become explicitly isolated at the finite truncation boundaries, where their asymptotic behavior may be directly analyzed. The ordered generalized limits then automatically cancel these boundary contributions while preserving the ordinary integral operations at every finite truncation stage. Furthermore, if repeated differentiations are required at finite truncation stages, for example in asymptotic analysis, the truncation boundaries may be regularized by introducing Dirac delta distributions or delta-convergent smooth kernels, such as Gaussian or sinc-type function sequences, without changing the essential ordered-limit structure of the present formulation.

\subsection{Relation to the Distributional Fourier Transform}

Classical Lebesgue integration requires absolute integrability and therefore does not directly capture convergence mechanisms based on oscillatory cancellation that frequently arise in Fourier analysis. Such cancellation is instead realized in improper integral formulations and in distributional interpretations through test-function duality. In the truncate-and-generalized-limit (t.g.l.) formulation considered here, the same cancellation mechanism appears explicitly at the level of truncated oscillatory kernels prior to the limiting process, thereby preserving the constructive structure underlying generalized Fourier inversion beyond the classical $L^1(\mathbb{R})$ framework.

The t.g.l.\ formulation introduced in Section 3 provides a representation of Fourier inversion in terms of truncated functions and ordered limits. We now examine how this formulation relates to the Fourier transform in the sense of distributions. In particular, we clarify how the functional identity $\langle F,g\rangle = \langle f,G\rangle$ appearing in the distributional definition of the Fourier transform arises naturally from truncated classical integrals within the present framework.

For a distribution $f\in\mathcal{S}'(\mathbb{R})$, Fourier transform $F$ on $\mathcal{S}(\mathbb{R})$ and its inverse $\overleftarrow{f}$ in $\mathcal{S}'(\mathbb{R})$ are defined as a functional [7] as:
\[
\langle F,g\rangle = \langle f,G\rangle, \qquad \langle \overleftarrow{f},G\rangle = \langle F,\overleftarrow{g}\rangle.
\]
where $g \in \mathcal{S}(\mathbb{R})$ is a function (called a test function) and $G\in\mathcal{S}(\mathbb{R})$ (if $g\in\mathcal{S}(\mathbb{R})$ then $G\in\mathcal{S}(\mathbb{R})$ [9]) is its Fourier transform ($g\leftrightarrow G$).

Assume that a distribution $f \notin L^1(\mathbb{R})$ and $f\in L^1_{\mathrm{loc}}(\mathbb{R})$ has polynomial growth, then the Fourier transform $F(\omega)$ as a functional outputs a number $k_{f,g}$ as:
\[
\langle F(\omega),g(\omega)\rangle = \int_{-\infty}^{\infty} f(t)\, G(t)\,dt = k_{f,g} \qquad (k_{f,g}<\infty).
\]
If $f$ is also a regular function in $L^1(\mathbb{R})$, the symbol $\langle\cdot,\cdot\rangle$ denotes an ordinary inner product defined with integrals. Then, since $F$ exists and the order of the integrations is interchangeable, we have:
\[
\langle F(\omega),g(\omega)\rangle = \int_{-\infty}^{\infty} F(\omega)\, g(\omega)\,d\omega = \int_{-\infty}^{\infty}\int_{-\infty}^{\infty} f(t)\, e^{-i\omega t}\,dt\, g(\omega)\,d\omega
= \int_{-\infty}^{\infty} f(t)\, G(t)\,dt.
\]
Another definition by generalized function is known as [8]:
\[
\langle F,G\rangle = 2\pi\langle f,g\rangle, \qquad \langle\overleftarrow{f},\overleftarrow{g}\rangle = \frac{1}{2\pi}\langle F,G\rangle.
\]
In this case for $f\in L^1(\mathbb{R})$, we have a well known result as:
\[
\int_{-\infty}^{\infty} f(t)\, g(t)\,dt = \frac{1}{2\pi}\int_{-\infty}^{\infty} F(\omega)\, G(\omega)\,d\omega,
\]
where $G(\omega)$ not $G(-\omega)$ appears due to the definition with $\exp(i\omega t)$ instead of $\exp(-i\omega t)$. In this paper we assume the definition by distribution.

We have shown that although the t.g.l.\ forward transform $F(\omega)$ diverges for $f \notin L^1(\mathbb{R})$, the t.g.l.\ integral of $F(\omega)$ with a well defined $G(\omega)$ converges [Theorem 3.2]. Here we show that $F(\omega)$ converges in the distributional sense.

For a truncated function $f_T\in L^1_{\mathrm{loc}}(\mathbb{R})$, the forward and inverse transform, $F_T(\omega)$ and $\overleftarrow{f}_T(t) = f_T(t)$, satisfy the equations:
\[
\langle F_T(\omega),g(\omega)\rangle = \int_{-\infty}^{\infty} F_T(\omega)\, g(\omega)\,d\omega = \int_{-\infty}^{\infty} f_T(t)\, G(t)\,dt = \langle f_T(t),G(t)\rangle,
\]
\[
\langle \overleftarrow{f}_T(t),G(t)\rangle = \int_{-\infty}^{\infty} \overleftarrow{f}_T(t)\, G(t)\,dt = \int_{-\infty}^{\infty} F_T(\omega)\, \overleftarrow{g}(\omega)\,d\omega = \langle F_T(\omega),\overleftarrow{g}(\omega)\rangle.
\]
Since each truncated function $f_T$ belongs to $L^1(\mathbb{R})$ and converges locally to $f$, and since the pairing with Schwartz test functions is stable under truncation limits for functions of polynomial growth, we obtain
\[
\lim_{T\to\infty} \langle F_T(\omega),g(\omega)\rangle = \langle F(\omega),g(\omega)\rangle
\]
which shows that the truncated transforms determine the same limiting functional as the distributional Fourier transform in the sense of tempered distributions. In this way the pairing-based convergence of the t.g.l.\ formulation is equivalent to distributional convergence when tested against Schwartz functions.

Accordingly, although the forward transform does not in general converge as a classical improper integral, it is well defined through generalized-limit pairings that are consistent with the distributional interpretation of the Fourier transform.

Since the t.g.l.\ inverse transform $\overleftarrow{f}_T \to \overleftarrow{f} = f$ pointwise for $T\to\infty$, similarly, it converges in a distributional sense:
\[
\lim_{T\to\infty} \langle \overleftarrow{f}_T(t),G(t)\rangle = \lim_{T\to\infty} \int_{-\infty}^{\infty} F_T(\omega)\, \overleftarrow{g}(\omega)\,d\omega = \langle F(\omega),\overleftarrow{g}(\omega)\rangle = \langle f(t),G(t)\rangle.
\]
Therefore we have
\[
\overleftarrow{f}_{\mathrm{tgl}} = \lim_{T\to\infty} \overleftarrow{f}_T = \lim_{T\to\infty} f_T = f \quad \text{in } \mathcal{S}'(\mathbb{R}).
\]

The present t.g.l.\ formulation and distributional Fourier theory share a common structural feature in that generalized Fourier meaning is ultimately determined through dual-domain pairing operations. However, the inverse reconstruction mechanism differs fundamentally. The t.g.l.\ formulation introduces explicit frequency-domain localization through finite reciprocal-domain truncation, whereby pointwise inverse reconstruction is obtained directly through ordered localization limits generated by ordinary oscillatory integrals without introducing auxiliary test-function pairings into the inverse reconstruction process itself.

Truncation in the present formulation plays a structural role in regulating singular behavior through ordered limiting processes applied directly to ordinary functions. In contrast to principal-value constructions in distribution theory, which often rely on symmetric truncation, the present formulation does not require symmetry assumptions on the truncation procedure. Instead, cancellation mechanisms arise naturally from the transition structure connecting finite-domain representations with their limiting Fourier-integral counterparts. As a consequence, certain singular functions that do not belong to $L^1(\mathbb{R})$, such as $1/x$ and related homogeneous functions, admit well-defined truncation-based interpretations within the present t.g.l.\ framework.

It is useful to distinguish between the Dirac delta distribution defined in the sense of generalized functions and the Dirichlet-type delta kernels arising from truncation-based Fourier inversion. The Dirac delta function (distribution) $\delta(t)$ is defined through distributional duality and does not belong to the class of ordinary functions:
\[
\langle \delta(t),f(t)\rangle = \int_{-\infty}^{\infty} \delta(t)\, f(t)\,dt = f(0).
\]
More precisely, the delta function $\delta(t)$ and delta-convergent sequence of functions defined by the generalized limit are defined as [13]:
\[
\lim_{\Omega\to\infty} \int_a^b \delta_\Omega(t)\, f(t)\,dt = \int_a^b \delta(t)\, f(t)\,dt =
\begin{cases}
\big[f(0^+)+f(0^-)\big]/2 & (ab<0) \\
0 & (ab>0),
\end{cases}
\]
where $a$ and $b$ are constants that take any small or infinite value.

Dirichlet-type kernels defined as frequency truncated functions, are a delta-convergent sequence of functions $\delta_\Omega(t)$:
\[
\delta_\Omega(t) = \frac{\sin\Omega t}{\pi t} = \frac{1}{2\pi}\int_{-\Omega}^{\Omega} e^{i\omega t}\,d\omega.
\]
These kernels therefore provide explicit constructive approximations to localization operators that appear only implicitly in the distributional formulation through the abstract delta distribution.

The convergence of $\delta_\Omega(t)$ as a generalized limit
\[
\lim_{\Omega\to\infty} \int_{-\infty}^{\infty} \delta_\Omega(t)\, f(t)\,dt = \int_{-\infty}^{\infty} \delta(t)\, f(t)\,dt = f(0),
\]
is equivalently shown in the proof of Theorem 3.5. We should note that $f \in L^1(\mathbb{R})$ is required.

There are many delta-convergent sequences of functions such as
\[
\lim_{\sigma\to0} \frac{1}{\sqrt{2\pi}\,\sigma}\, e^{-t^2/2\sigma^2} = \lim_{\sigma\to0} \frac{\sigma}{\pi(\sigma^2+t^2)} = \lim_{\varepsilon\to0} \frac{1}{2\varepsilon}\, \mathbf{1}_{[-\varepsilon,\varepsilon]}(t) = \delta(t),
\]
in the sense of the generalized limit with $f(t)$. The required conditions for $f(t)$ depend on the delta-convergent sequence of functions. The admissible class of functions depends on the particular delta-convergent sequence used; in general, global $L^1(\mathbb{R})$ integrability is not required.

The Dirichlet-type kernels associated with symmetric frequency truncation possess oscillatory structure and extend over the entire time domain. Their infinite support reflects the finite-bandwidth truncation in the frequency domain. However, when interpreted in the presence of finite-resolution observation or measurement processes, these kernels behave effectively as localized packets. Such behavior plays an important role in signal reconstruction theory and in quantum-mechanical localization, where impulse-like responses arise through limiting processes applied to band-limited functions rather than as idealized distributions.

From this viewpoint, the Dirac delta distribution may therefore be regarded as the idealized limiting representation of localization generated by truncation-based Dirichlet-type kernels, whereas truncation-based kernels provide constructive realizations of localization mechanisms that retain information about the transition from finite bandwidth to infinite bandwidth limits.

This transition structure plays an essential role in applications such as signal reconstruction and quantum-mechanical localization, where impulse-type responses arise through limiting processes applied to band-limited functions rather than appearing directly as primitive distributions. The present truncation-based formulation preserves this transition structure explicitly, whereas the distributional Dirac delta represents only the limiting object itself. These issues are discussed in Section 5.

Although the t.g.l.\ Fourier transform is formulated for functions rather than distributions, it applies naturally to delta-convergent sequences of functions that approximate the Dirac delta distribution. In particular, as illustrated in Examples 8 and 9, delta-convergent kernels may be treated directly within the present framework through ordered limiting processes applied to ordinary functions. Thus impulse-type behavior can be described constructively without explicit use of test-function duality.

In the classical formulation of the Fourier transform, differentiation of the forward or inverse transform cannot in general be performed arbitrarily many times unless suitable decay conditions are imposed so that boundary contributions vanish after integration by parts. For example, identities such as
\[
f^{(n)}(t) \leftrightarrow (i\omega)^n F(\omega)
\]
hold only when the boundary term vanishes (Theorem 3.6). In contrast, within distribution theory the Fourier transform is defined through the duality relation and differentiation is introduced by
\[
\langle f'(t),g(t)\rangle = -\langle f(t),g'(t)\rangle.
\]
so that boundary contributions are absorbed into the dual pairing with rapidly decaying test functions. As a consequence, repeated differentiation of Fourier transforms becomes structurally well defined in the distributional framework even when the classical integral formulation does not apply.

On the other hand, in the truncate-and-generalized-limit inversion considered in the present work, differentiation properties arise through the localization structure of the inversion kernel itself. In particular, the Dirichlet kernel admits the representation
\[
D_\Omega(x) = \frac{1}{2\pi}\int_{-\Omega}^{\Omega} e^{i\omega x}\,d\omega,
\]
so that derivatives of arbitrary order are obtained by differentiation under the integral sign,
\[
D_\Omega^{(n)}(x) = \frac{1}{2\pi}\int_{-\Omega}^{\Omega} (i\omega)^n e^{i\omega x}\,d\omega.
\]
Since the integration interval is finite, the kernel is infinitely differentiable, and therefore convolution with the Dirichlet localization kernel produces band-limited approximations that are differentiable of arbitrary order. From this viewpoint, the truncate-and-generalized-limit inversion may be interpreted as a smoothing reconstruction of truncated functions through symmetric frequency localization, in which differentiability at intermediate stages is ensured by the compact spectral support of the inversion kernel.

Within distribution theory, regular, principal-value, finite-part, and derivative-type singular objects are mathematically unified as tempered distributions in the common dual space $\mathcal{S}'(\mathbb{R})$, which is the continuous dual of the Schwartz space $\mathcal{S}(\mathbb{R})$. For example,
\[
1,\quad \text{p.v.}(1/t),\quad \text{f.p.}(1/t^2),
\]
and derivative-type singular distributions such as $\delta'(t)$ all belong to $\mathcal{S}'(\mathbb{R})$. However, their constructive realizations may require different limiting interpretations, such as ordinary integration, Cauchy principal values, Hadamard finite parts, or distributional differentiation, depending on the local nature of the singularity.

In contrast, the present t.g.l.\ formulation treats globally non-$L^1$ functions, oscillatory functions, and locally singular functions within the same operational framework of ordinary improper integrals, explicit truncation of the target functions, and ordered generalized limits. From this viewpoint, the present formulation does not seek to enlarge the admissible class of generalized functions beyond existing distributional frameworks, but rather provides a unified constructive realization of generalized Fourier transforms within classical integral operations.

Thus, while the generalized Fourier transforms obtained by the two approaches are compatible in the tempered-distribution setting, the mathematical constructions differ fundamentally, as examined further in the following subsection.

A further point of comparison, complementing the discussion in the Introduction, concerns Feichtinger's $S_0(\mathbb{R})$ framework. For functions of polynomial growth, where the pairing-based convergence established above coincides with convergence in the sense of tempered distributions, the same limiting functional is compatible with Feichtinger's setting whenever the auxiliary function $g$ is chosen from $S_0(\mathbb{R})$, since the space of mild distributions $S_0'(\mathbb{R})$ embeds continuously into $\mathcal{S}'(\mathbb{R})$. This compatibility, however, is inherited from the embedding of $S_0'(\mathbb{R})$ in $\mathcal{S}'(\mathbb{R})$ rather than established independently of it, and it does not indicate that the two frameworks share a common constructive route: Feichtinger's theory proceeds through duality pairing on the fixed Banach test-function space $S_0(\mathbb{R})$, whereas the t.g.l.\ transforms considered here are obtained through ordered limits of ordinary truncated Fourier integrals, as discussed in the Introduction. Despite this difference in construction, the two approaches share a common motivation: both seek to present a constructive method for generalized Fourier transforms, in contrast to the purely functional-analytic definition underlying the classical Schwartz distributional Fourier transform.

\subsection{Duality Structures in Generalized Fourier Formulations}

The generalized Fourier transform in distribution theory and the present t.g.l.\ formulation are based on different mathematical structures, but share a common duality concept underlying the interpretations of the forward and inverse Fourier transforms.

In distribution theory, the term ``dual'' refers to the functional duality between a test-function space and its continuous dual space of linear functionals. The generalized Fourier transform is defined through the duality identity $\langle \mathcal{F}\phi,\psi\rangle = \langle \phi,\mathcal{F}\psi\rangle$, in which the meanings of the forward and inverse Fourier transforms are recovered through pairings between these dual spaces.

By contrast, in the present t.g.l.\ formulation, the term ``dual'' refers to the ordinary Fourier-conjugate domains of the original variable $t$ and the transformed variable $\omega$, both defined on geometrically similar real domains. Despite this structural distinction, the conceptual roles of the two dualities are closely related. In both formulations, the meanings of the forward and inverse Fourier transforms are not determined solely within a single domain, but are recovered only through pairings between mutually conjugate entities.

In distribution theory, such pairings are realized abstractly through continuous dual actions between function spaces. In the present t.g.l.\ formulation, the same dual-domain interpretation is realized constructively through ordinary Fourier integrations, Dirichlet-kernel localization, and ordered generalized limits between the time and frequency domains. In this sense, the present t.g.l.\ definition was originally motivated by the duality structure underlying the distributional Fourier definition, while preserving ordinary target functions, ordinary integral operations, and explicit Fourier-domain interpretations throughout the formulation.

In the classical Fourier theory for $L^1(\mathbb{R})$ functions, the forward Fourier integrals, inverse Fourier localization through the Dirichlet kernel, and the limiting processes in the dual domains are usually treated as mutually compatible operations, and their ordered structures remain largely hidden because the corresponding limits become effectively interchangeable under classical convergence conditions. Indeed, the generalized Fourier inversion meaning is completed only after iterated pairings of the form
\[
\int_{-\infty}^{\infty}\left[\int_{-\infty}^{\infty} f(t)\, e^{-i\omega t}\,dt\right] e^{i\omega x}\,d\omega
\]
are interpreted together through the reciprocal time-frequency domains.

In distribution theory, the same duality structure is preserved through continuous dual pairings between function spaces, but the ordered integral mechanisms are embedded within abstract functional definitions.

By contrast, the present t.g.l.\ formulation makes this hidden structure explicit by preserving ordinary Fourier integrals at every finite truncation stage and by recovering generalized Fourier meaning only through ordered pairings between the time and frequency domains.

In this sense, the present formulation may be viewed not only as a constructive extension of Fourier transforms beyond $L^1(\mathbb{R})$, but also as an explicit exposure of an essential mathematical structure that is implicitly contained in both classical and distributional Fourier formulations.

\subsection{Summary of Comparison with Generalized Fourier Approaches}

The structural position of the present truncate-and-generalized-limit (t.g.l.) formulation may be clarified further by comparison with representative generalized Fourier approaches.

Unlike finite-interval Fourier-series limits, the present formulation preserves the continuous exponential basis $\exp(i\omega t)$ and the Fourier-dual domains over the entire real line from the outset, while only the target functions are explicitly truncated.

Unlike damping or kernel-based regularization methods, no auxiliary decaying factors, smoothing kernels, or modified integral kernels are introduced.

Unlike purely sequential approximation methods, where generalized objects are represented by approximating families, the present formulation treats globally non-$L^1$ functions, oscillatory functions, and locally singular functions within the same framework of ordinary improper integrals, explicit function truncation, and ordered generalized limits.

Unlike the distributional Fourier transform, where generalized objects are defined abstractly as continuous linear functionals on test-function spaces, the present formulation remains entirely within explicit integral operations and provides a unified constructive realization of the same generalized Fourier meaning.

Unlike the Gel'fand--Shilov and ultradistribution frameworks, which extend beyond tempered distributions by constructing a new topological space of test functions matched in advance to a prescribed class of growth rates, together with the corresponding duality theory, the present formulation reaches functions of comparable growth, such as $e^{\alpha t^2}$ ($\alpha>0$), on a case-by-case basis: a single auxiliary function of sufficiently rapid decay is chosen for the pairing (Example~5), and no new generalized-function space is constructed.

Taken together, this sequence of comparisons --- together with the historical account in the Introduction and the treatment of singular kernels via principal-value and Hadamard finite-part regularization in Section~5.2 --- constitutes a self-contained overview of the principal approaches to generalized Fourier transform theory, developed here as a byproduct of positioning the t.g.l.\ formulation among them.

\section{Applications to Linear Systems and Differential Equations}

We apply the truncate-and-generalized-limit (t.g.l.) approach to linear differential equations and linear time-independent (LTI) systems. The purpose of this section is to illustrate that the truncate-and-generalized-limit formulation is not limited to inversion theory but applies directly to linear systems and differential equations through truncation-based representations of impulse responses and convolution operators.

As anticipated in the Introduction, although the t.g.l.\ forward transform does not generally converge pointwise, it yields well-defined results when integrated against sufficiently regular functions. This property guarantees that the associated convolution in the time domain remains bounded for bounded inputs, providing a direct interpretation in terms of stability of linear systems and stable solutions of linear differential equations.

In classical system theory the impulse response representation and the convolution formula are often introduced using the Dirac delta distribution. This formulation is mathematically justified within distribution theory and provides a standard framework for describing linear time-invariant systems. In the present section we first recall this conventional representation in order to establish the connection with existing theory. We then reinterpret the same constructions within the truncate-and-generalized-limit framework by replacing the idealized delta distribution with delta-convergent kernels and by introducing truncation-based limiting procedures. In this way the transition from distributional formulations to the truncate-and-generalized-limit approach is made explicit and no ambiguity arises concerning the interpretation of the impulse-response representation used below.

\subsection{Linear differential equations and linear time-invariant systems}

Consider a linear differential equation for functions $x(t)$ and $y(t)$, not necessarily belonging to $L^1(\mathbb{R})$:
\[
\sum_{m=0}^{M} a_m\, \frac{d^m}{dt^m} y(t) = \sum_{n=0}^{N} b_n\, \frac{d^n}{dt^n} x(t).
\]
Let the input be $x(t) = \delta(t)$, where the delta symbol is first understood in the conventional distributional sense used in linear-system theory; it will later be replaced by delta-convergent kernels in the truncate-and-generalized-limit formulation. And let the initial conditions be null.

The corresponding output is denoted by $h(t)$, which is called the impulse response of the system. Because the coefficients are constant, the system is linear and time-invariant because of the null initial condition. Then the response to a weighted and shifted impulse $x(\tau)\delta(t-\tau)$ is $x(\tau)h(t-\tau)$.

Formally, a signal may be represented through superposition of shifted impulse responses as
\[
x(t) = \int_{-\infty}^{\infty} \delta(t-\tau)\, x(\tau)\,d\tau.
\]
The corresponding output is obtained by superposition:
\[
y(t) = \int_{-\infty}^{\infty} h(t-\tau)\, x(\tau)\,d\tau = \int_{-\infty}^{\infty} h(\tau)\, x(t-\tau)\,d\tau,
\]
which is the convolution representation of the system response.

Taking the Fourier transform yields
\[
Y(\omega) = H(\omega)\, X(\omega)
\]
where $x(t)\leftrightarrow X(\omega)$, $h(t)\leftrightarrow H(\omega)$ and $y(t)\leftrightarrow Y(\omega)$, provided that the transforms exist.

Thus the system can be characterized entirely by its impulse response $h(t)$. A linear time-independent system is stable if bounded input produces bounded output. A necessary and sufficient condition for bounded-input bounded-output (BIBO) stability of a linear time-invariant system is that $h(t) \in L^1(\mathbb{R})$ [13].

Formally, the inverse Fourier representation of the system response may be written as
\[
y(t) = \frac{1}{2\pi}\int_{-\infty}^{\infty} H(\omega)\, X(\omega)\, e^{i\omega t}\,d\omega,
\]
provided that the integral exists in an appropriate sense. The precise interpretation of this representation will be discussed below.

However, if $X(\omega)$ does not converge in the classical sense, the integral representation above cannot be evaluated directly. This illustrates the limitations of the classical Fourier transform defined through convergent integrals.

Distribution theory resolves this difficulty by defining Fourier transforms through pairing with test functions. In contrast, the truncate-and-generalized-limit formulation provides a constructive alternative based on truncation of the signal followed by an ordered limiting process.

Let $x_T(t)$ denote a truncated version of the input signal satisfying
\[
x_T(t) \in L^1(\mathbb{R}), \qquad x_T(t) \to x(t)\ \ (T\to\infty).
\]
Then
\[
y_T(t) = \int_{-\infty}^{\infty} h(t-\tau)\, x_T(\tau)\,d\tau,
\]
is well defined, and we define $y(t) = \lim_{T\to\infty} y_T(t)$.
Equivalently,
\[
y(t) = \frac{1}{2\pi}\int_{-\infty}^{\infty} H(\omega)\, X(\omega)\, e^{i\omega t}\,d\omega.
\]
Because the truncated signal $x_T(t)$ belongs to $L^1(\mathbb{R})$, the Fourier transform $X_T(\omega)$ exists classically. The generalized limit $T\to\infty$ is then taken outside the integral representation, providing a constructive realization of the system response even when the original signal $x(t)$ is not absolutely integrable.

The expression $x(t)=\delta(t)$ is not admissible as an ordinary function. Instead, we now replace the distributional impulse by a delta-convergent kernel
\[
\delta_\Omega(t) = \frac{\sin(\Omega t)}{\pi t}
\]
which provides a truncation-based realization of the impulse within the truncate-and-generalized-limit framework.

In accordance with the definition of delta-convergent kernels used in the present framework, the limit $\Omega\to\infty$ is taken outside the integral representation.

Let the response to $\delta_\Omega(t)$ be denoted by $h_\Omega(t)$. Then for a general input represented formally as
\[
x(t) = \lim_{\Omega\to\infty} \int_{-\infty}^{\infty} \delta_\Omega(t-\tau)\, x(\tau)\,d\tau,
\]
the corresponding output is
\[
y(t) = \lim_{\Omega\to\infty} \int_{-\infty}^{\infty} h_\Omega(t-\tau)\, x(\tau)\,d\tau = \int_{-\infty}^{\infty} h(t-\tau)\, x(\tau)\,d\tau.
\]
Other delta-convergent kernels such as Gaussian approximations may also be used in place of $\delta_\Omega(t)$.

Consider a system with impulse response
\[
h(t) = \delta_\varepsilon(t) = \frac{1}{2\varepsilon}\, \mathbf{1}_{[-\varepsilon,\varepsilon]}(t),
\]
where $\delta_\varepsilon(t)$ is a delta-convergent kernel with width parameter $\varepsilon$.

For an input signal $x(t)=\delta_\Omega(t)$, the output becomes
\[
y(t) = \lim_{\Omega\to\infty} \int_{-\infty}^{\infty} \delta_\varepsilon(t-\tau)\, \delta_\Omega(\tau)\,d\tau = \delta_\varepsilon(t).
\]
Thus the output is qualitatively different from the input. The input kernel spreads across an infinite time interval with envelope $1/t$, whereas the output is confined to the interval $t\in[-\varepsilon,\varepsilon]$. This reflects the finite time resolution of the system. The parameter $\varepsilon$ therefore represents the minimum detectable time scale of the system. Signals with duration smaller than $\varepsilon$ cannot be resolved accurately.

Let
\[
H_\varepsilon(\omega) = \frac{\sin\omega\varepsilon}{\omega\varepsilon} \leftrightarrow \delta_\varepsilon(t).
\]
Then the highest resolvable frequency satisfies
\[
\Omega\varepsilon \approx \pi.
\]
This relation expresses a reciprocal localization relation between time width and effective frequency bandwidth associated with the kernel.

Thus the impulse-response representation of linear time-invariant systems may be interpreted equivalently either within the distributional framework or within the truncate-and-generalized-limit formulation, the latter providing a constructive realization based on truncation and ordered limiting processes applied directly to ordinary functions.

\subsection{Application of the t.g.l.\ Fourier transform to differential equations}

We now apply the truncate-and-generalized-limit Fourier transform to the differential equation, which is given previously,
\[
\sum_{m=0}^{M} a_m\, \frac{d^m}{dt^m} y(t) = \sum_{n=0}^{N} b_n\, \frac{d^n}{dt^n} x(t).
\]
In the classical integral definition of the Fourier transform based on absolute convergence, both $x(t)$ and $y(t)$ are typically required to belong to $L^1(\mathbb{R})$.

In the present approach we instead introduce truncated functions $x_T(t)$ and $y_T(t)$.

Using the t.g.l.\ differentiation property;
\[
\frac{d^n}{dt^n} f_T(t) \leftrightarrow (i\omega)^n F_T(\omega),
\]
we obtain
\[
\sum_{m=0}^{M} a_m\, (i\omega)^m Y_T(\omega) = \sum_{n=0}^{N} b_n\, (i\omega)^n X_T(\omega).
\]
Thus
\[
Y_T(\omega) = H(\omega)\, X_T(\omega),
\]
where
\[
H(\omega) = \frac{\displaystyle\sum_{n=0}^{N} b_n\, (i\omega)^n}{\displaystyle\sum_{m=0}^{M} a_m\, (i\omega)^m}.
\]
Taking the inverse transform gives
\[
y(t) = \lim_{T\to\infty} y_T(t) = \lim_{T\to\infty} \int_{-\infty}^{\infty} h(t-\tau)\, x_T(\tau)\,d\tau = \int_{-\infty}^{\infty} h(t-\tau)\, x(\tau)\,d\tau.
\]
Thus truncation combined with generalized limits (t.g.l.) allows Fourier-transform methods to be applied constructively to signals beyond the classical $L^1(\mathbb{R})$ setting.

\subsection{Eigenfunction approach to non-$L^1$ systems}

The truncate-and-generalized-limit (t.g.l.) approach provides a constructive framework complementary to distribution theory for analyzing linear systems that are not necessarily described by absolutely integrable signals. In particular, many linear differential equations arising in engineering admit a frequency-domain interpretation even when the classical Fourier transform does not converge in the ordinary sense.

Consider again the linear constant-coefficient differential equation of the form
\[
\sum_{m=0}^{M} a_m\, \frac{d^m}{dt^m} y(t) = \sum_{n=0}^{N} b_n\, \frac{d^n}{dt^n} x(t).
\]
A standard method for analyzing such equations is the eigenfunction approach used in alternating-current theory. Complex exponential functions
\[
x(t) = X\, e^{i\omega t}
\]
are eigenfunctions of linear time-invariant operators with constant coefficients. Substituting the trial solutions $x(t) = X e^{i\omega t}$, $y(t) = Y e^{i\omega t}$ into the differential equation yields
\[
Y = H(\omega)\, X
\]
where $H(\omega)$ is the eigenvalue, which is called the frequency transfer function of the system. The same relation follows from the convolution representation
\[
y(t) = \int_{-\infty}^{\infty} h(t-\tau)\, x(\tau)\,d\tau,
\]
for exponential inputs $x(t) = X e^{i\omega t}$, we obtain
\[
y(t) = H(\omega)\, e^{i\omega t} = H(\omega)\, x(t).
\]
Thus complex exponentials act as eigenfunctions of linear time-invariant systems, and the transfer function $H(\omega)$ plays the role of the corresponding eigenvalue. For example,
\[
y(t) = \frac{d}{dt} x(t) \quad \Rightarrow \quad y(t) = i\omega\, x(t),
\]
and
\[
y(t) = \int x(t)\,dt \quad \Rightarrow \quad y(t) = \frac{1}{i\omega}\, x(t),
\]
These relations show that frequency-domain input-output descriptions remain meaningful even when the signals themselves are not absolutely integrable. In this sense, exponential eigenfunction methods provide a natural spectral characterization of linear systems independent of the classical convergence of Fourier transforms.

Absolute integrability of the impulse response $h(t)$ guarantees bounded-input bounded-output stability and ensures classical existence of the transfer function. However, useful spectral relations often remain valid under weaker conditions, and such situations motivate the use of truncation-based limiting procedures and distributional interpretations.

Eigenfunction methods also extend naturally to linear partial differential equations, including Maxwell's equations, where solutions are often sought in the form
\[
f(x,y,z,t) = e^{-i(k_x x + k_y y + k_z z - \omega t)}.
\]
This viewpoint further supports the interpretation of Fourier-type representations as generalized eigenfunction expansions applicable beyond the classical $L^1$ framework.

Throughout this section, all initial conditions in the linear differential equation are assumed null. Under this assumption, the complex exponential functions $e^{i\omega t}$ used in the Fourier transform serve a double role: they are eigenfunctions of linear time-invariant systems and constant-coefficient differential equations, as shown above, and they are complete orthogonal functions on the infinite time domain, in the sense that the pointwise inverse Fourier transform (Theorem~3.1) recovers the original function as an orthogonal-function expansion over $\omega \in (-\infty,\infty)$, in direct analogy with the finite-interval Fourier series expansion of Appendix~A.

More generally, the eigenfunctions of a linear time-invariant operator corresponding to distinct eigenvalues are orthogonal with each other; see, e.g., Naylor and Sell~[18], Ch.~5, Sec.~23.

\subsection{Application to the Green function of the two-dimensional Laplacian}

We consider the Green-function equation of the two-dimensional Laplacian
\[
-\Delta u(x,y) = \delta(x)\delta(y) \qquad (x,y) \in \mathbb{R}^2,
\]
where
\[
\Delta = \frac{\partial^2}{\partial x^2} + \frac{\partial^2}{\partial y^2}
\]
denotes the Laplacian operator and $\delta(x)\delta(y)$ is the two-dimensional Dirac distribution. The partial differential equation describes a linear input-output relation; output $u(x,y)$ for input $\delta(x)\delta(y)$. This equation determines the fundamental solution of the Laplacian operator and explains the appearance of logarithmic singularities in two-dimensional Fourier analysis and potential theory.

We introduce the two-dimensional Fourier transform
\[
U(\xi,\eta) = \int_{-\infty}^{\infty}\int_{-\infty}^{\infty} u(x,y)\, e^{-i(\xi x + \eta y)}\,dx\,dy
\]
with inverse transform
\[
u(x,y) = \frac{1}{(2\pi)^2}\int_{-\infty}^{\infty}\int_{-\infty}^{\infty} U(\xi,\eta)\, e^{i(\xi x + \eta y)}\,d\xi\,d\eta.
\]
Since the Green function is not integrable in the classical sense, these transforms are interpreted in the framework of tempered distributions.

Applying the Fourier transform to
\[
-\Delta u(x,y) = \delta(x)\delta(y)
\]
gives
\[
\mathcal{F}(-\Delta u) = \mathcal{F}\big(\delta(x)\delta(y)\big).
\]
Distribution theory provides the identities
\[
\mathcal{F}\big(\delta(x)\delta(y)\big) = 1
\]
and
\[
(\xi^2+\eta^2)\, U(\xi,\eta) = 1.
\]
Thus
\[
U(\xi,\eta) = \frac{1}{\xi^2+\eta^2}
\]
is singular at the origin and not locally integrable there, the relation it must be interpreted in the distributional sense as the solution of
\[
(\xi^2+\eta^2)\, U(\xi,\eta) = 1.
\]
Taking the inverse Fourier transform gives formally
\[
u(x,y) = \frac{1}{(2\pi)^2}\int_{-\infty}^{\infty}\int_{-\infty}^{\infty} \frac{e^{i(\xi x+\eta y)}}{\xi^2+\eta^2}\,d\xi\,d\eta.
\]
Because the integrand is singular at $(\xi,\eta)=(0,0)$ this integral must be interpreted either distributionally or through localization limits.

In the truncate-and-generalized-limit formulation we introduce frequency truncation
\[
0 < \varepsilon < \rho < \Omega, \qquad \rho = \sqrt{\xi^2+\eta^2}
\]
and consider
\[
u_{\varepsilon,\Omega}(x,y) = \frac{1}{(2\pi)^2}\int\!\!\int_{\varepsilon<\rho<\Omega} \frac{e^{i(\xi x+\eta y)}}{\xi^2+\eta^2}\,d\xi\,d\eta.
\]
The Green function is obtained from the localization limits $\varepsilon\to0$, $\Omega\to\infty$.

Introduce frequency-domain polar coordinates
\[
\xi = \rho\cos\theta, \quad \eta = \rho\sin\theta, \quad d\xi\,d\eta = \rho\,d\rho\,d\theta.
\]
Let
\[
r = \sqrt{x^2+y^2}, \quad x = r\cos\varphi, \quad y = r\sin\varphi, \quad dx\,dy = r\,dr\,d\varphi.
\]
Then
\[
\xi x + \eta y = r\rho\cos(\theta-\varphi).
\]
Thus
\[
u_{\varepsilon,\Omega}(r) = \frac{1}{(2\pi)^2}\int_\varepsilon^\Omega\int_0^{2\pi} \frac{e^{ir\rho\cos(\theta-\varphi)}}{\rho}\,\rho\,d\theta\,d\rho = \frac{1}{(2\pi)^2}\int_\varepsilon^\Omega\int_0^{2\pi} \frac{e^{ir\rho\cos\theta}}{\rho}\,\rho\,d\theta\,d\rho.
\]
Using the identity
\[
\int_0^{2\pi} e^{ir\rho\cos\theta}\,d\theta = 2\pi\, J_0(\rho r),
\]
where $J_0$ is the Bessel function, we obtain
\[
u_{\varepsilon,\Omega}(r) = \frac{1}{2\pi}\int_\varepsilon^\Omega \frac{J_0(\rho r)}{\rho}\,d\rho.
\]
Then
\[
u_{\varepsilon,\Omega}(r) = \frac{1}{2\pi}\int_\varepsilon^\Omega \frac{J_0(\rho r)-1}{\rho}\,d\rho + \frac{1}{2\pi}\int_\varepsilon^\Omega \frac{1}{\rho}\,d\rho.
\]
The second integral gives
\[
\int_\varepsilon^\Omega \frac{d\rho}{\rho} = \ln\Omega - \ln\varepsilon = \ln\frac{\Omega}{\varepsilon}.
\]
The terms
\[
\frac{1}{2\pi}\ln(\Omega/\varepsilon)
\]
are independent of $r$. They therefore represent only an additive constant $C(\varepsilon,\Omega)$.

Since
\[
\Delta C(\varepsilon,\Omega) = 0,
\]
this constant does not affect the Green-function equation. Linear solutions of the partial differential equation also require this boundary term to vanish.

Taking the localization limits $\varepsilon\to0$, $\Omega\to\infty$, therefore yields the distributional solution
\[
u(r) = -\frac{1}{2\pi}\ln r,
\]
up to an additive constant. Where a classical identity is used:
\[
\int_0^\infty \frac{J_0(\rho r)-1}{\rho}\,d\rho = -\ln r.
\]
Because the solution depends only on $r=\sqrt{x^2+y^2}$, the Laplacian reduces to
\[
\Delta u(r) = \frac{d^2 u}{dr^2} + \frac{1}{r}\frac{du}{dr}.
\]
Thus the Green-function equation may be written as
\[
-\left(\frac{d^2}{dr^2} + \frac{1}{r}\frac{d}{dr}\right) u(r) = \delta(r) \quad (=\delta(x)\delta(y))
\]
in the sense of distributions.

The distributional identity
\[
U(\xi,\eta) = \frac{1}{\xi^2+\eta^2}
\]
and the truncate-and-generalized-limit representation
\[
u_{\varepsilon,\Omega}(r) = \frac{1}{2\pi}\int_\varepsilon^\Omega \frac{J_0(\rho r)}{\rho}\,d\rho
\]
lead to the same Green function after localization limits are taken. Thus the logarithmic kernel appears either as the inverse Fourier transform of a distribution or as the stable localization limit of truncated Fourier inversion integrals.

The Green function can be quasi-classically found as:
\[
-\left(\frac{d^2}{dr^2} + \frac{1}{r}\frac{d}{dr}\right) u(r) = -\frac{1}{r}\frac{d}{dr}\!\left(r\frac{d}{dr}\right) u(r) = \delta(r).
\]
Multiplying $r$ on both sides yields,
\[
\frac{d}{dr}\!\left(r\frac{d}{dr}\right) u(r) = r\,\delta(r) = 0.
\]
Then
\[
r\,\frac{d}{dr} u(r) = C_1,
\]
and
\[
u(r) = C_1 \ln r + C_2,
\]
where $C_1$ and $C_2$ are constants. The boundary term $C_2$ should vanish since it is independent of $r$ or because of the linearity of the input-output relation.

We normalize $u(r)$ such that
\[
\int_0^{2\pi} \left(\frac{d}{dr} u(r)\right) r\,d\varphi = 1,
\]
we have
\[
\int_0^{2\pi} \frac{d}{dr}\big[C_1 \ln r\big]\, r\,d\varphi = 2\pi C_1 = 1, \qquad C_1 = \frac{1}{2\pi}.
\]

\section{Examples}

In this section, we illustrate the above considerations through several examples. These examples demonstrate how the t.g.l.\ formulation applies to functions that are not integrable in the classical sense, including singular and non-decaying cases. In particular, the examples show explicitly how divergent contributions are handled through truncation and limiting processes, and how the resulting expressions are consistent with known distributional interpretations. For simplicity, symmetric time truncation and frequency truncations are assumed for a moment.

The examples presented in this section are classified according to the mathematical origin of non-$L^1$ admissibility. Section 5.1 considers locally integrable functions whose divergence originates from non-decaying behavior over unbounded domains. Section 5.2 considers functions having local singularities. Section 5.3 considers oscillatory functions for which ordinary truncation may not be required despite non-$L^1$ admissibility because of oscillatory cancellation.

\subsection{Examples for locally integrable functions}

Here we consider functions which are non-integrable in infinite intervals, but is integrable locally. Since the conventional (inverse) Fourier transform with Eqs.~(1) and (2) cannot be applied to those functions, the t.g.l.\ definitions with Eqs.~(4) and (5) are applied.

In the following examples several classical approaches are recalled alongside the truncate-and-generalized-limit formulation in order to clarify their relationships. In particular, distributional interpretations, convergence-factor methods, and symmetry-based arguments are presented where appropriate as reference frameworks. The purpose of including these comparisons is not to replace these established methods, but to show that the t.g.l.\ formulation reproduces their results through truncation-based limiting procedures applied directly to ordinary functions. This makes explicit the connection between the present approach and existing generalized Fourier-transform techniques while highlighting the constructive role of truncation in treating non-integrable functions.

\textbf{Example 1. A constant function $f(t)=1$ for $-\infty<t<\infty$.}

Here our t.g.l.\ approach, conventional elementary methods and the distribution theory are compared to clarify the distinction between them.

This function $f(t)=1$ is not absolutely integrable. For its truncation $f_T(t)$, we have:
\[
F_T(\omega) = \int_{-\infty}^{\infty} f_T(t)\, e^{-i\omega t}\,dt = \int_{-T}^{T} e^{-i\omega t}\,dt = \frac{2\sin\omega T}{\omega} = 2\pi\delta_T(\omega),
\]
where $\delta_T(\omega) = \sin\omega T/\pi\omega$.

The inverse Fourier transform is:
\[
\overleftarrow{f}_T(t) = \lim_{\Omega\to\infty} \frac{1}{2\pi}\int_{-\Omega}^{\Omega} \frac{2\sin\omega T}{\omega}\, e^{i\omega t}\,d\omega = \frac{1}{\pi}\int_0^\infty \frac{2\sin\omega T}{\omega}\cos\omega t\,d\omega =
\begin{cases}
1 & |t|<T \\
1/2 & |t|=T \\
0 & |t|>T,
\end{cases}
\]
where we used
\begin{equation}
\int_0^\infty \frac{\sin ax\, \cos bx}{x}\,dx =
\begin{cases}
\pi/2 & |b|<a \\
\pi/4 & |b|=a \\
0 & |b|>a,
\end{cases}
\qquad a>0,\ b\in\mathbb{R}.
\label{eq:7}
\end{equation}
Thus, in the pointwise sense, $\lim_{T\to\infty} \overleftarrow{f}_T(t) = \lim_{T\to\infty} f_T(t) = f(t)=1$ $(-\infty<t<\infty)$. Note that $F_T(\omega)$ is an ordinary function, and the integrals are well defined for finite $T$.

For $f(t)=1$, we have $F(\omega) = \lim_{T\to\infty} F_T(\omega)$. The spectral density function $F(\omega)$ never converges in the pointwise sense, and is singular at $\omega=0$:
\[
F(0) = \lim_{T\to\infty}\lim_{\omega\to0} \frac{2\sin\omega T}{\omega} = \lim_{T\to\infty} 2T = \infty.
\]
On the other hand, the spectrum function $C(\omega)$ [Appendix A] converges pointwise as a normalized spectral coefficient density as:
\[
C(\omega) = \lim_{T\to\infty} C_T(\omega) =
\begin{cases}
1 & \omega=0 \\
0 & \omega \ne 0.
\end{cases}
\]
It is worth noting that the singularity of $F(\omega)$ disappears if it is integrated even over a small interval.

The Fourier transform can be written as the generalized limit:
\[
F(\omega) = \lim_{T\to\infty} \frac{2\sin\omega T}{\omega} = \lim_{T\to\infty} 2\pi\delta_T(\omega) = 2\pi\delta(\omega).
\]
For a function $f(t) = \exp(i\omega_0 t)$, we obtain:
\[
F_T(\omega) = \int_{-T}^{T} e^{i\omega_0 t}\, e^{-i\omega t}\,dt = \frac{2\sin[(\omega-\omega_0)T]}{\omega-\omega_0} = 2\pi\delta_T(\omega-\omega_0).
\]
Based on the above-mentioned result and definition of the Fourier transform, we determine that $F(\omega) = 2\pi\delta(\omega-\omega_0)$ in the sense of the generalized (or distributional) limit.

Using distribution theory, we have:
\[
\langle f(t),G(t)\rangle = \int_{-\infty}^{\infty} G(t)\,dt = 2\pi g(0) = \langle 2\pi\delta(\omega),g(\omega)\rangle = \langle F(\omega),g(\omega)\rangle,
\]
where $\delta(\omega)$ is the delta function. Thus we obtain $F(\omega) = 2\pi\delta(\omega)$. In the derivation above, we used the symmetry of the Fourier transform pair as follows:
\begin{equation}
2\pi g(-\omega) = \int_{-\infty}^{\infty} G(t)\, e^{-i\omega t}\,dt.
\label{eq:8}
\end{equation}
The inverse Fourier transform $\overleftarrow{f}(t)=1$ is obtained as follows:
\[
\langle \overleftarrow{f}(t),G(t)\rangle = \langle F(\omega),\overleftarrow{g}(\omega)\rangle = \langle 2\pi\delta(\omega),\overleftarrow{g}(\omega)\rangle
= 2\pi\overleftarrow{g}(0) = \int_{-\infty}^{\infty} G(t)\,dt = \langle 1,G(t)\rangle.
\]
In the argument above the delta function appears directly; no delta-convergent sequence of functions is used. Furthermore, the Fourier transform is changed to an operator from the ordinary definition. The results obtained using this approach as $1\leftrightarrow2\pi\delta(\omega)$ and $\delta(t)\leftrightarrow1$ are based on the aforementioned facts.

Another approach is based on a function sequence: the function $f(t)=1$ never requires the smudge function defined in [9]. A function sequence $f_n(t) = \exp(-t^2/n^2)$ $(n=1,2,3,\dots)$ converges to $1$ for $n\to\infty$. The Fourier transform of $f_n(t)$ is $F_n(\omega) = \sqrt{\pi}\, n\, \exp[-(n\omega)^2/4]$, which never converges in the classical sense for $n\to\infty$. However it converges to $2\pi\delta(\omega)$ in the sense of the generalized limit or distribution. Thus we denote $f(t)=1\leftrightarrow2\pi\delta(\omega)$.

Another method [4] uses a concept similar to the Laplace transform as
\[
F_\sigma(\omega) = \int_{-\infty}^{\infty} f_\sigma(t)\, e^{-i\omega t}\,dt.
\]
Here $f_\sigma(t) = f(t)\exp(-\sigma|t|)$ $(\sigma>0)$. Notably $\lim_{\sigma\to0} f_\sigma(t) = f(t)$. Substituting $f(t)=1$, we obtain:
\[
F_\sigma(\omega) = \int_{-\infty}^{\infty} e^{-\sigma|t|}\, e^{-i\omega t}\,dt = \frac{2\sigma}{\sigma^2+\omega^2}.
\]
The inverse transform is obtained as:
\[
\overleftarrow{f}_\sigma(t) = \frac{1}{2\pi}\int_{-\infty}^{\infty} \frac{2\sigma}{\sigma^2+\omega^2}\, e^{i\omega t}\,d\omega = \frac{1}{\pi}\int_0^\infty \frac{2\sigma\cos\omega t}{\sigma^2+\omega^2}\,d\omega = e^{-\sigma|t|} \to 1\ \ (\sigma\to0),
\]
where we used
\[
\int_0^\infty \frac{\cos ax}{b^2+x^2}\,dx = \frac{\pi}{2b}\, e^{-ab} \qquad (b>0).
\]
If we consider the limit $\sigma\to0$ before the inverse Fourier transform, the inverse Fourier transform cannot be obtained, since the integration becomes impossible due to the fact that $\lim_{\sigma\to0} F_\sigma(\omega)$ never converges. $F_\sigma(\omega)/2\pi$ is a delta-convergent sequence of functions. We prove this: consider
\[
\lim_{\sigma\to0} \int_{-\infty}^{\infty} F_\sigma(\omega)\big[g(\omega)-g(0)\big]\,d\omega = \lim_{\sigma\to0} \int_{-\infty}^{\infty} \frac{2\sigma\omega}{\sigma^2+\omega^2}\cdot\frac{g(\omega)-g(0)}{\omega}\,d\omega.
\]
Since $[g(\omega)-g(0)]/\omega$ is bounded ($g(\omega)\in\mathcal{S}(\mathbb{R})$), the integral tends to zero for $\sigma\to0$. Then
\[
\lim_{\sigma\to0} \int_{-\infty}^{\infty} \frac{2\sigma}{\sigma^2+\omega^2}\, g(\omega)\,d\omega = \int_{-\infty}^{\infty} \frac{2\sigma}{\sigma^2+\omega^2}\, g(0)\,d\omega = 2\pi g(0),
\]
where $\int_{-\infty}^{\infty} \dfrac{\sigma}{\sigma^2+x^2}\,dx = \pi$ is used.

The final method often used to handle $f(t)=1$ involves using the symmetry of the Fourier transform pair [13], as depicted in Eq.~(\ref{eq:8}). For $f(t) = \delta(t)$, we obtain:
\[
F(\omega) = \int_{-\infty}^{\infty} \delta(t)\, e^{-i\omega t}\,dt = 1.
\]
Therefore we obtain:
\[
\int_{-\infty}^{\infty} F(t)\, e^{-i\omega t}\,dt = 2\pi f(-\omega) = 2\pi\delta(-\omega) = 2\pi\delta(\omega).
\]
Equation~(\ref{eq:8}) is derived from the fact that the Fourier transform pair holds, i.e., $f(t)$ is assumed to be absolutely integrable. The application of Eq.~(\ref{eq:8}) is inconsistent with $f(t)=1$. However, the final method gives the same results, i.e., $2\pi\delta(\omega)$ as other methods, although it is based on the ordinary definition of the Fourier transform.

Within the classical definition of the Fourier transform, the delta function $\delta(t)$ itself is not accepted because it is not an ordinary function. Let us consider the Dirac delta-convergent sequence of functions $\delta_\varepsilon(t)$ defined previously. For a fixed $\varepsilon$, the Fourier transform is:
\[
\Delta_\varepsilon(\omega) = \frac{\sin(\omega\varepsilon/2)}{\omega\varepsilon/2}.
\]
The inverse Fourier transform is:
\[
\frac{1}{2\pi}\int_{-\infty}^{\infty} \Delta_\varepsilon(\omega)\, e^{i\omega t}\,d\omega = \delta_\varepsilon(t).
\]
Therefore, we have
\[
\lim_{\varepsilon\to0} \frac{1}{2\pi}\int_{-\infty}^{\infty} \Delta_\varepsilon(\omega)\, e^{i\omega t}\,d\omega = \lim_{\varepsilon\to0} \delta_\varepsilon(t).
\]
If we interchange the order of the limit and integration, the inverse Fourier transform converges:
\[
\lim_{\varepsilon\to0} \Delta_\varepsilon(\omega) = \lim_{\varepsilon\to0} \frac{\sin\omega\varepsilon/2}{\omega\varepsilon/2} = 1.
\]
Then we obtain the convergent inverse Fourier transform as:
\[
\overleftarrow{f}(t) = \frac{1}{2\pi}\int_{-\infty}^{\infty} \lim_{\varepsilon\to0}\Delta_\varepsilon(\omega)\, e^{i\omega t}\,d\omega = \frac{1}{2\pi}\int_{-\infty}^{\infty} 1\cdot e^{i\omega t}\,d\omega
= \lim_{\Omega\to\infty} \frac{1}{2\pi}\int_{-\Omega}^{\Omega} e^{i\omega t}\,d\omega = \lim_{\Omega\to\infty} \delta_\Omega(t),
\]
which is significantly different from $\lim_{\varepsilon\to0}\delta_\varepsilon(t)$. Therefore we know that the interchange of the order of the limit and the integration is not allowed even if $\lim_{\varepsilon\to0}\Delta_\varepsilon(\omega)$ converges.

The two results are, however, the same as the generalized limit or in the distributional sense:
\[
\lim_{\Omega\to\infty} \delta_\Omega(t) = \lim_{\varepsilon\to0} \delta_\varepsilon(t) = \delta(t).
\]
Relation $\lim_{\varepsilon\to0}\delta_\varepsilon(t) = \delta(t)$ is proved as: consider an integral
\[
\lim_{\varepsilon\to0} \int_{-\infty}^{\infty} \delta_\varepsilon(t)\big[f(t)-f(0)\big]\,dt = \lim_{\varepsilon\to0} \frac{1}{\varepsilon}\int_{-\varepsilon/2}^{\varepsilon/2} \big[f(t)-f(0)\big]\,dt.
\]
The integral vanishes because the integrand reaches for $\varepsilon\to0$ the differential $f'(0)$ that is assumed bounded, and the integration interval reaches zero. Thus we obtain:
\[
\lim_{\varepsilon\to0} \int_{-\infty}^{\infty} \delta_\varepsilon(t)\, f(t)\,dt = f(0)\lim_{\varepsilon\to0} \frac{1}{\varepsilon}\int_{-\varepsilon/2}^{\varepsilon/2} dt = f(0).
\]
Our limiting approach can avoid the required condition for $f \in L^1(\mathbb{R})$. In this aspect, our approach belongs to the same family as those with convergence factor methods. Our approach introduces a truncation function $p_T(t)=\mathbf{1}_{[-T,T]}(t)$. The generalized function sequence method (without the smudge function) adopts $f_n(t) = \exp(-t^2/n^2)$. The Laplace transform concept method assumes that $f_\sigma(t) = f(t)\exp(-\sigma|t|)$.

However, a distinction exists between our method and the convergent factor methods: the convergence factor method cannot be applied directly to non-locally integrable functions such as $f(t)=1/t$ and $f(t)=\lim_{\Omega\to\infty}\sin(\Omega t)/t$, whereas the t.g.l.\ method can.

The ordinary definition of the inverse Fourier transform (Eq.~(2)) should not be applied for non-absolutely integrable functions; instead of this, our definition (Eq.~(4)) is useful, as long as the Fourier transform pair are defined by the transition from the Fourier series. This is the reason why artificial parameters $n$ for the generalized function sequence method, and $\sigma$ for the Laplace transform concept method are introduced.

Now we consider asymmetric truncation and interchange of limit orders. We obtain the forward transform of truncated $f_T$:
\[
F_T(\omega) = \int_{-T_1}^{T_2} e^{-i\omega t}\,dt = \frac{i}{\omega}\big(e^{-i\omega T_2} - e^{i\omega T_1}\big).
\]
Limit $T_1,T_2\to\infty$, $F_T(\omega)$ does not converge pointwise.

The truncated inverse transform of $F_T(\omega)$ is:
\[
f_{T,\Omega}(t) = \frac{1}{2\pi}\int_{-\Omega}^{\Omega} \left[\frac{\sin\omega(T_2-t)}{\omega} + \frac{\sin\omega(T_1+t)}{\omega}\right] d\omega.
\]
The t.g.l.\ inversion becomes:
\[
\lim_{T\to\infty}\lim_{\Omega\to\infty} f_{T,\Omega}(t) = \lim_{T\to\infty} f(t)\,\mathbf{1}_{[-T_1,T_2]}(t) = f(t).
\]
Interchanging the limit order yields:
\[
\lim_{\Omega\to\infty}\lim_{T\to\infty} f_{T,\Omega}(t) = \lim_{T\to\infty} f_{T,\Omega}(t) = f(t).
\]
Then the interchange is allowed in this case.
\[
\overleftarrow{f}_\Omega(t) = \lim_{T\to\infty} \overleftarrow{f}_{\Omega,T}(t) = \lim_{T\to\infty} \frac{1}{2\pi}\int_{-\Omega}^{\Omega} \frac{2\sin\omega T}{\omega}\, e^{i\omega t}\,d\omega = 1 < \infty.
\]

\textbf{Example 2. Sign function $\mathrm{sgn}(t)=t/|t|$.}

Truncated forward transform is:
\[
F_T(\omega) = \int_{-T_1}^{T_2} \mathrm{sgn}(t)\,e^{-i\omega t}\,dt = \frac{2}{i\omega} - \frac{e^{i\omega T_1}+e^{-i\omega T_2}}{i\omega}.
\]
If $T_1=T_2=T$,
\[
F_T(\omega) = \frac{2}{i\omega} - \frac{2\cos\omega T}{i\omega},
\]
and the truncated inverse transform is:
\[
f_{T,\Omega}(t) = \frac{1}{2\pi}\int_{-\Omega}^{\Omega}\left[\frac{2}{i\omega}-\frac{2\cos\omega T}{i\omega}\right]e^{i\omega t}\,d\omega = \frac{1}{\pi}\int_{-\Omega}^{\Omega}\left[\frac{\sin\omega t}{\omega} - \frac{\cos\omega T\,\sin\omega t}{\omega}\right]d\omega.
\]
Then from Eq.~(\ref{eq:7}), $\lim_{T\to\infty}\lim_{\Omega\to\infty} f_{T,\Omega}(t) = \mathrm{sgn}(t)$.

Let
\[
G_T(\omega) = \frac{e^{i\omega T_1}+e^{-i\omega T_2}}{i\omega}.
\]
The truncated inverse transform $g_{T,\Omega}(t)$ of $G_T(\omega)$ is:
\[
g_{T,\Omega}(t) = \frac{1}{2\pi}\int_{-\Omega}^{\Omega}\left[\frac{\sin\omega(t+T_1)}{\omega} + \frac{\sin\omega(t-T_2)}{\omega}\right]d\omega
= \frac{1}{2\pi}\int_{-\Omega(t+T_1)}^{\Omega(t+T_1)}\frac{\sin x}{x}\,dx + \frac{1}{2\pi}\int_{-\Omega(t-T_2)}^{\Omega(t-T_2)}\frac{\sin x}{x}\,dx.
\]
Then
\[
\lim_{\Omega\to\infty}\lim_{T_1,T_2\to\infty} g_{T,\Omega}(t) = \lim_{T_1,T_2\to\infty}\lim_{\Omega\to\infty} g_{T,\Omega}(t) = 0,
\]
so the two limits may be interchanged.

In the above sense, we denote $F(\omega) = 2/i\omega$.

\textbf{Example 3. Power function $f(t) = t^n$ $(n=0,1,2,\dots)$.}

This function increases slowly. The case of $n=0$ has already been discussed. We first consider the case for $n=1$.

We have for $\omega \ne 0$:
\[
F_T(\omega) = \int_{-T}^{T} t\, e^{-i\omega t}\,dt = \left[\frac{t\, e^{-i\omega t}}{-i\omega}\right]_{-T}^{T} - \int_{-T}^{T} \frac{e^{-i\omega t}}{-i\omega}\,dt = i\,\frac{2T\cos\omega T}{\omega} - i\,\frac{2\sin\omega T}{\omega^2},
\]
and $F_T(0)=0$.

Using $\delta_T(\omega) = \sin(\omega T)/\pi\omega$, we obtain:
\[
\delta_T'(\omega) = \frac{T\cos\omega T}{\pi\omega} - \frac{\sin\omega T}{\pi\omega^2}.
\]
Thus, we obtain:
\[
F(\omega) = \lim_{T\to\infty} F_T(\omega) = i\,2\pi \lim_{T\to\infty} \delta_T'(\omega) = i\,2\pi\,\delta'(\omega).
\]
The truncated inverse Fourier transform is obtained as:
\[
\overleftarrow{f}_{T,\Omega}(t) = \frac{1}{2\pi}\int_{-\Omega}^{\Omega} F_T(\omega)\, e^{i\omega t}\,d\omega = i\int_{-\Omega}^{\Omega} \delta_T'(\omega)\, e^{i\omega t}\,d\omega
\]
\[
= i\,\Big[\delta_T(\omega)\, e^{i\omega t}\Big]_{-\Omega}^{\Omega} + t\int_{-\Omega}^{\Omega} \delta_T(\omega)\, e^{i\omega t}\,d\omega
= -2\,\delta_T(\Omega)\sin\Omega t + t\int_{-\Omega}^{\Omega} \delta_T(\omega)\, e^{i\omega t}\,d\omega.
\]
For $\Omega\to\infty$, the first term of the last right-hand side vanishes. Using Eq.~(\ref{eq:7}), we obtain:
\[
\overleftarrow{f}_T(t) = t \lim_{\Omega\to\infty} \int_{-\Omega}^{\Omega} \delta_T(\omega)\, e^{i\omega t}\,d\omega = t \lim_{\Omega\to\infty} \int_{-\Omega}^{\Omega} \frac{\sin T\omega\, \cos\omega t}{\pi\omega}\,d\omega =
\begin{cases}
t & |t|<T \\
0 & |t|>T.
\end{cases}
\]
Then, we obtain $f(t) = \lim_{T\to\infty} f_T(t) = \lim_{T\to\infty} \overleftarrow{f}_T(t) = t$ for $t\in(-\infty,\infty)$.

The interchange of the limits order is not allowed as is discussed in Theorem 3.4.

For $n \ge 2$, the t.g.l.\ integral with integration by parts [Theorem 2.1] is used here. Using the truncated function $f_T(t)$ of $f(t)=1$, $f_T(t) \leftrightarrow F_T(\omega) = 2\sin(\omega T)/\omega = 2\pi\delta_T(\omega)$ and $\lim_{T\to\infty} F_T(\omega) = 2\pi\delta(\omega)$ in a distributional sense we obtain:
\[
t^n \leftrightarrow (-i)^{-n}\, 2\pi \lim_{T\to\infty} \delta_T^{(n)}(\omega) = (-i)^{-n}\, 2\pi\, \delta^{(n)}(\omega).
\]
Distributional inverse transform is given as:
\[
\langle \overleftarrow{f}(t),G(t)\rangle = \langle F(\omega),\overleftarrow{g}(\omega)\rangle = \big\langle (-i)^{-n}\, 2\pi\, \delta^{(n)}(\omega),\overleftarrow{g}(\omega)\big\rangle
\]
\[
= (-i)^{-n} \int_{-\infty}^{\infty} \delta^{(n)}(\omega) \int_{-\infty}^{\infty} G(t)\, e^{i\omega t}\,dt\,d\omega
= (-i)^{-n} \int_{-\infty}^{\infty} G(t) \int_{-\infty}^{\infty} \delta^{(n)}(\omega)\, e^{i\omega t}\,d\omega\,dt
\]
\[
= \int_{-\infty}^{\infty} t^n\, G(t)\,dt = \langle t^n,G(t)\rangle.
\]
Thus we have $\overleftarrow{f}_T(t) = t^n$.

Since the truncated functions are in $L^1(\mathbb{R})$, we obtain the t.g.l.\ inverse transform readily:
\[
\overleftarrow{f}_T(t) = \lim_{T\to\infty} t^n\, \mathbf{1}_{[-T,T]}(t) = t^n.
\]

\textbf{Example 4. Periodic function.}

A periodic function $f(t)$ is not absolutely integrable and is expressed as
\[
f(t) = \sum_{n=-\infty}^{\infty} f_{T_0}(t-nT_0),
\]
where $T_0$ is the duration, and $f_{T_0}(t) = f(t)$ for $-T_0/2 \le t \le T_0/2$ and $f_{T_0}(t) = 0$ otherwise. We use the Fourier series as
\[
f(t) = \sum_{n=-\infty}^{\infty} C_n\, e^{in\omega_0 t} \qquad (\omega_0 = 2\pi/T_0),
\]
where
\[
C_n = \frac{1}{T_0}\int_{-T_0/2}^{T_0/2} f(t)\, e^{-in\omega_0 t}\,dt.
\]
This example illustrates explicitly how the Fourier transform of periodic functions arises as the continuous-frequency extension of their Fourier-series representation through truncation-based limiting procedures.

We truncate $f(t)$ in an interval $[-T,T]$ and denote it as $f_T(t)$, and its Fourier transform as $F_T(\omega)$. Applying the previous result, $\exp(i\omega_0 t) \leftrightarrow \lim_{T\to\infty} 2\pi\delta_T(\omega-\omega_0)$, we obtain:
\[
F(\omega) = \lim_{T\to\infty} F_T(\omega) = \lim_{T\to\infty} \sum_{n=-\infty}^{\infty} 2\pi C_n\, \delta_T(\omega-n\omega_0) = \sum_{n=-\infty}^{\infty} 2\pi C_n\, \delta(\omega-n\omega_0),
\]
where $\delta_T(\omega) = \sin(T\omega)/\pi\omega$. Using $F_T(\omega)$ for the periodic function, we obtain:
\[
C(\omega) = \lim_{T\to\infty} C_T(\omega) = \lim_{T\to\infty} F_T(\omega)/2T = \lim_{T\to\infty} \sum_{n=-\infty}^{\infty} 2\pi C_n\, \delta_T(\omega-n\omega_0)\, /\, 2T
= \sum_{n=-\infty}^{\infty} C_n\, \delta[\omega,n\omega_0],
\]
where
\[
\delta[\omega,n\omega_0] =
\begin{cases}
C_n & \omega = n\omega_0 \\
0 & \omega \ne n\omega_0
\end{cases}
\qquad (n=0,\pm1,\pm2,\dots).
\]
Thus, the spectrum $C(\omega)$ [see Appendix A] converges even when the spectral density $F(\omega)$ does not.

It is worth noting $C(\omega) = 0$ for absolutely integrable functions, and $C(\omega)$ never converges for an unbounded $f(t)$.

\textbf{Example 5. A rapidly increasing function $f(t) = \exp(\alpha t^2)$ $(\alpha>0)$.}

For a rapidly increasing function [14], the t.g.l.\ integral yields the inverse Fourier transform as:
\[
\lim_{T\to\infty} \overleftarrow{f}_T(t) = \lim_{T\to\infty} f_T(t) = f(t) = e^{\alpha t^2}.
\]
The forward transform, $\lim_{T\to\infty} F_T(\omega)$, diverges. However, with a rapidly decreasing function, $g(t) = \exp(-\beta t^2)$ $(\beta>0)$, it converges conditionally ($\beta>\alpha$) in the sense of generalized limit as:
\[
\lim_{T\to\infty} \langle F_T(\omega),g(\omega)\rangle = \lim_{T\to\infty} \langle f_T(t),G(t)\rangle = \int_{-\infty}^{\infty} e^{\alpha t^2}\, e^{-\beta t^2}\,dt = \sqrt{\frac{\pi}{\beta-\alpha}}.
\]

\begin{remarks}
The function $e^{\alpha t^2}$ with $\alpha > 0$ grows faster than any polynomial and therefore lies outside the space $\mathcal{S}'(\mathbb{R})$ of tempered distributions entirely. It cannot be treated within the classical distributional framework. Nevertheless, the t.g.l.\ formulation handles it at every finite truncation stage: the truncated function $f_T = e^{\alpha t^2}\cdot\mathbf{1}_{[-T,T]} \in L^1(\mathbb{R})$ for each finite $T$, and the forward transform family $\{F_T(\omega)\}$ is a well-defined ordinary Riemann integral at every stage. Generalized meaning is recovered through pairing with a sufficiently rapidly decreasing auxiliary function $g(t) = e^{-\beta t^2}$ $(\beta > \alpha)$, as shown above. This illustrates that the t.g.l.\ framework extends naturally to rapidly increasing functions lying beyond the reach of distribution theory, provided suitable auxiliary functions exist for the pairing. Moreover, since $e^{\alpha t^2}$ is continuous (hence locally integrable and piecewise continuous) at every point, it belongs to Class $\mathcal{A}_1$ of Section~3.2, and therefore also admits pointwise inverse reconstruction via the t.g.l.\ Inversion Theorem (Theorem~3.1) --- independently of the pairing-based discussion above, which concerns only the generalized meaning of the forward transform $F(\omega)$ itself.
\end{remarks}

\subsection{Examples for non-locally integrable singular functions}

We now consider functions that are not locally integrable because of singularities, and therefore require regularization even on finite intervals within the classical framework. Non-convergent functions or singularity functions cannot be integrated even in a finite interval. The t.g.l.\ integral makes it possible as well as other regularization methods. In contrast to the regularization method, divergent boundary term cancellation is built in the t.g.l.\ integration by parts.

\textbf{Example 6. Inverse power function $f(t) = 1/t^n$ $(n=1,2,3,\dots)$.}

This function is singular at a point $t=0$. Such transforms can be interpreted rigorously within the framework of tempered distributions [7]. A case of $n=1$ is considered first. Here we introduce the conventional treatments to show the distinction between them and the t.g.l.\ integral.

The Cauchy p.v.\ is applied to the Fourier transform of a singular function $1/t$ as follows:
\[
F(\omega) = \mathrm{p.v.}\int_{-\infty}^{\infty} \frac{1}{t}\, e^{-i\omega t}\,dt
= \lim_{\varepsilon\to0} \left(\int_{-\infty}^{-\varepsilon} + \int_\varepsilon^\infty\right) \frac{1}{t}\, e^{-i\omega t}\,dt = -i\pi\, \mathrm{sgn}(\omega).
\]
In this derivation, symmetric cancellation of the odd integrand, i.e.\ $\cos(\omega t)/t$, and regularization through the factor $\sin(\omega t)$ are used.

These transforms are classically interpreted within distribution theory through principal-value or finite-part regularizations. Here we show that the same results arise naturally from truncation-based limiting procedures.

Let us consider the t.g.l.\ integral; if we consider a symmetrical truncation, the same result as the Cauchy p.v.\ integral is obtained:
\[
F(\omega) = \text{t.g.l.}\int_{-\infty}^{\infty} \frac{1}{t}\, e^{-i\omega t}\,dt
= \lim_{\varepsilon\to0} \left(\int_{-\infty}^{-\varepsilon} + \int_\varepsilon^\infty\right) \frac{1}{t}\, e^{-i\omega t}\,dt = -i\pi\, \mathrm{sgn}(\omega).
\]
Next we consider an asymmetrical truncation case:
\[
F(\omega) = \text{t.g.l.}\int_{-\infty}^{\infty} \frac{1}{t}\, e^{-i\omega t}\,dt = \lim_{\varepsilon\to0,\,T\to\infty} F_{\varepsilon,T}(\omega),
\]
where
\[
F_{\varepsilon,T}(\omega) = \left(\int_{-T}^{-\varepsilon} + \int_\varepsilon^{\varepsilon+\varepsilon_0} + \int_{\varepsilon+\varepsilon_0}^{T}\right) \frac{1}{t}\, e^{-i\omega t}\,dt.
\]
$F(\omega)$ does not converge as a classical improper integral, but its t.g.l.\ inverse transform converges to $1/t$ pointwise. Let us consider an integration above,
\[
F_{\varepsilon,\varepsilon_0}(\omega) = \int_\varepsilon^{\varepsilon+\varepsilon_0} \frac{1}{t}\, e^{-i\omega t}\,dt.
\]
It converges point-wise. The inverse transform of it becomes:
\[
f_{\varepsilon,\varepsilon_0}(t) = \lim_{\Omega\to\infty} \frac{1}{2\pi}\int_{-\Omega}^{\Omega} F_{\varepsilon,\varepsilon_0}(\omega)\, e^{i\omega t}\,d\omega = \lim_{\Omega\to\infty} \frac{1}{2\pi}\int_{-\Omega}^{\Omega} \left[\int_\varepsilon^{\varepsilon+\varepsilon_0} \frac{1}{x}\, e^{-i\omega x}\,dx\right] e^{i\omega t}\,d\omega.
\]
We can derive that:
\[
\lim_{\varepsilon\to0,\,T\to\infty} f_{\varepsilon,\varepsilon_0,T}(t) =
\begin{cases}
1/t & \varepsilon \le t \le \varepsilon+\varepsilon_0 \\
0 & \text{otherwise}.
\end{cases}
\]
Similarly, with other integrations we obtain:
\[
\lim_{\varepsilon\to0,\,T\to\infty} f_{\varepsilon,\varepsilon_0,T}(t) = f(t) = 1/t \qquad t \ne 0.
\]
We now examine the interchangeability of the limit order. We consider a symmetrical truncation for simplicity. We define a truncated forward transform,
\[
F_{\varepsilon,T}(\omega) = \left(\int_{-T}^{-\varepsilon} + \int_\varepsilon^{T}\right) \frac{1}{t}\, e^{-i\omega t}\,dt,
\]
and its frequency-truncated inverse transform:
\[
f_{T,\varepsilon,\Omega}(t) = \frac{1}{2\pi}\int_{-\Omega}^{\Omega} F_{T,\varepsilon}(\omega)\, e^{i\omega t}\,d\omega = \frac{1}{2\pi}\int_{-\Omega}^{\Omega} \left[\left(\int_{-T}^{-\varepsilon} + \int_\varepsilon^{T}\right) \frac{1}{x}\, e^{-i\omega x}\,dx\right] e^{i\omega t}\,d\omega
\]
\[
= \frac{1}{\pi} \left(\int_{-T}^{-\varepsilon} + \int_\varepsilon^{T}\right) \frac{1}{x}\, \frac{\sin\Omega(x-t)}{x-t}\,dx.
\]
Take a reversed order of the limits:
\[
\lim_{\varepsilon\to0,\,T\to\infty} f_{T,\varepsilon,\Omega}(t) = \frac{1}{\pi}\int_{-\infty}^{\infty} \frac{1}{z/\Omega+t}\, \frac{\sin z}{z}\,dz.
\]
Then the interchange is allowed as $\lim_{\Omega\to\infty}\lim_{\varepsilon\to0}\lim_{T\to\infty} f_{T,\varepsilon,\Omega}(t) = 1/t$, $t \ne 0$. Thus we have:
\[
F(\omega) = \text{t.g.l.}\int_{-\infty}^{\infty} \frac{1}{t}\, e^{-i\omega t}\,dt
= \lim_{\varepsilon\to0} \left(\int_{-\infty}^{-\varepsilon} + \int_\varepsilon^\infty\right) \frac{1}{t}\, e^{-i\omega t}\,dt = -i\pi\, \mathrm{sgn}(\omega).
\]
The Cauchy p.v.\ cannot be extended to $1/t^n$ for $n\ge2$, since regularization by multiplying $1/t^2$ by $\sin(\omega x)$ does not lead to convergence.

The Hadamard f.p.\ method [11] proceeds by writing:
\[
F(\omega) = \mathrm{f.p.}\int_{-\infty}^{\infty} \frac{1}{t^n}\, e^{-i\omega t}\,dt
= \int_{-\infty}^{\infty} \frac{1}{t^n}\, e^{-i\omega t}\,dt - F_d(\omega),
\]
where $F_d(\omega)$ represents the divergent contribution. A central difficulty of this approach lies in identifying the divergent part.

As an illustration, consider the case $n=2$. Using the p.v.\ prescription, we define:
\[
F_\varepsilon(\omega) = \left(\int_{-\infty}^{-\varepsilon} + \int_\varepsilon^\infty\right) \frac{1}{t^2}\, e^{-i\omega t}\,dt.
\]
Then we take $F(\omega) = \lim_{\varepsilon\to0} F_\varepsilon(\omega)$.

For fixed $\varepsilon$, the integrand is regular and truncations at $\pm T$ are unnecessary, and integration by parts yields
\[
F_\varepsilon(\omega) = \int_{-\infty}^{-\varepsilon} \frac{1}{t^2}\, e^{-i\omega t}\,dt + \int_\varepsilon^\infty \frac{1}{t^2}\, e^{-i\omega t}\,dt
\]
\[
= \left[-\frac{e^{-i\omega t}}{t}\right]_{-\infty}^{-\varepsilon} - i\omega\int_{-\infty}^{-\varepsilon} \frac{1}{t}\, e^{-i\omega t}\,dt + \left[-\frac{e^{-i\omega t}}{t}\right]_{\varepsilon}^{\infty} - i\omega\int_{\varepsilon}^{\infty} \frac{1}{t}\, e^{-i\omega t}\,dt
\]
\[
= -i\omega \left(\int_{-\infty}^{-\varepsilon} + \int_\varepsilon^\infty\right) \frac{1}{x}\, e^{-i\omega t}\,dt + \frac{2\cos(\omega\varepsilon)}{\varepsilon}.
\]
As $\varepsilon \to 0$, the divergent term is $2\cos(\omega\varepsilon)/\varepsilon$. Then the finite part approach gives:
\[
F(\omega) = \mathrm{f.p.}\int_{-\infty}^{\infty} \frac{1}{t^2}\, e^{-i\omega t}\,dt = -i\omega\big[-i\pi\,\mathrm{sgn}(\omega)\big] = -\pi|\omega|.
\]
The t.g.l.\ integral automatically cancels the divergent boundary terms:
\[
\text{t.g.l.}\, F_\varepsilon(\omega) = \left(\int_{-\infty}^{-\varepsilon} + \int_\varepsilon^\infty\right) \frac{1}{t^2}\, e^{-i\omega t}\,dt
= \left(\int_{-\infty}^{-\varepsilon} + \int_\varepsilon^\infty\right) \left\{\left(\frac{1}{-t}\right)' - \frac{\delta(t+\varepsilon)}{\varepsilon} + \frac{\delta(t-\varepsilon)}{-\varepsilon}\right\} e^{-i\omega t}\,dt
\]
\[
= \left[\frac{e^{-i\omega t}}{-t}\right]_{-\infty}^{-\varepsilon} + \left[\frac{e^{-i\omega t}}{-t}\right]_{\varepsilon}^{\infty} - (-i\omega)\left(\int_{-\infty}^{-\varepsilon} + \int_\varepsilon^\infty\right) \frac{1}{-t}\, e^{-i\omega t}\,dt - \frac{e^{i\omega\varepsilon}}{\varepsilon} - \frac{e^{-i\omega\varepsilon}}{\varepsilon}
\]
\[
= -i\omega \left(\int_\varepsilon^\infty + \int_{-\infty}^{-\varepsilon}\right) \frac{1}{t}\, e^{-i\omega t}\,dt.
\]
For $n \ge 3$, a canonical procedure owing to Gel'fand and Shilov [8] involves expanding $\exp(i\omega x)$ in a Taylor series and subtracting terms up to order $t^{n-2}$. The result may be summarized by the Fourier transform pair:
\begin{equation}
\mathrm{f.p.}\,\frac{1}{t^n} \leftrightarrow -i\pi\, \frac{(-i\omega)^{n-1}}{(n-1)!}\, \mathrm{sgn}(\omega).
\label{eq:9}
\end{equation}
In distribution theory, if the distribution is not locally integrable due to singularities, such as $1/t$, the singular function $1/t$ cannot directly be the distribution, therefore the p.v.\ concept should be taken; then we denote the distribution as $\mathrm{p.v.}\,1/t$. Furthermore, for $1/t^n$ for $n\ge2$, we should introduce the Hadamard f.p.\ concept. For $f(t)=1/t^n$, distribution theory leads to the same result as Eq.~(\ref{eq:9}), where the distribution $\mathrm{f.p.}\,1/t^n$ is understood.

We apply the t.g.l.\ approach with the integration-by-parts (Theorem 2.1) as:
\[
F(\omega) = \lim_{T\to\infty} \int_{-\infty}^{\infty} f_T^{(n-1)}(t)\, e^{-i\omega t}\,dt = (i\omega)^{n-1} \lim_{T\to\infty} \int_{-\infty}^{\infty} f_T(t)\, e^{-i\omega t}\,dt.
\]
Then using $\text{t.g.l.}\ 1/t \leftrightarrow (-i\pi)\,\mathrm{sgn}(\omega)$, and
\[
\frac{d^{n-1}}{dt^{n-1}}\,\frac{1}{t} = (-1)^{n-1}(n-1)!\,\frac{1}{t^n},
\]
we obtain for $n>0$ the same result
\[
\text{t.g.l.}\ \frac{1}{t^n} \leftrightarrow -i\pi\, \frac{(-i\omega)^{n-1}}{(n-1)!}\, \mathrm{sgn}(\omega),
\]
where the cancellation of divergent term is built in the definition of the t.g.l.\ integral.

Next we consider inverse transforms. The ordinary inversion becomes,
\[
\overleftarrow{f}(t) = \frac{1}{2\pi}\int_{-\infty}^{\infty} F(\omega)\, e^{i\omega t}\,d\omega = \frac{1}{t} - \lim_{\Omega\to\infty} \frac{\cos(\Omega t)}{t},
\]
which does not converge.

The distributional inversion cancels the above divergent term with a test function as:
\[
\lim_{\Omega\to\infty} \int_{-\infty}^{\infty} \frac{\cos(\Omega t)}{t}\, G(t)\,dt = \lim_{\Omega\to\infty} \int_{-\infty}^{\infty} \cos(\Omega t)\, \frac{G(t)-G(0)}{t}\,dt + \lim_{\Omega\to\infty} \mathrm{p.v.}\, G(0) \int_{-\infty}^{\infty} \frac{\cos(\Omega t)}{t}\,dt = 0,
\]
where the first integral vanishes owing to the Riemann--Lebesgue lemma and the second integral owing to symmetric cancellation of the Cauchy p.v.\ integral.

In the t.g.l.\ framework, the same result is obtained as a limit of truncated classical integrals, without explicitly invoking test functions. In the distributional treatment of the singular functions, the principal value is defined through a symmetric exclusion of the singularity, reflecting a balance between the positive and negative parts of the domain. This symmetry is built into the definition with a test function and ensures cancellation in the sense of distributions. In contrast, the t.g.l.\ formulation does not require an a priori symmetric truncation. The cancellation of divergent contributions arises from the structure of truncation and the associated limiting process. In this sense, the symmetry observed in distributional formulations appears here as a consequence of the limiting procedure rather than as a defining assumption. This illustrates how distributional Fourier transforms can be realized within the present approach.

\textbf{Example 7. Asymmetric frequency-truncation with Hilbert transformation.}

The Hilbert transform is defined for $f \in L^1(\mathbb{R})$ as a convolution integral:
\[
\hat{f}(t) = \frac{1}{\pi}\int_{-\infty}^{\infty} f(t-x)\,\frac{1}{x}\,dx = \frac{1}{\pi}\int_{-\infty}^{\infty} f(x)\,\frac{1}{t-x}\,dx.
\]
The forward transform of $\hat{f}$ becomes:
\[
\hat{F}(\omega) = F(\omega)\,(-i)\,\mathrm{sgn}(\omega).
\]
Define
\[
F^{+}_{\mathrm{ssb}}(\omega) = F(\omega) + i\hat{F}(\omega), \qquad F^{-}_{\mathrm{ssb}}(\omega) = F(\omega) - i\hat{F}(\omega).
\]
Then we obtain the asymmetrically frequency-truncated forward transform as,
\[
F^{+}_{\mathrm{ssb}}(\omega) = 2F(\omega)\cdot \mathbf{1}_{[0,\infty]}(\omega), \qquad F^{-}_{\mathrm{ssb}}(\omega) = 2F(\omega)\cdot \mathbf{1}_{[-\infty,0]}(\omega).
\]
These results give the mathematical foundation of so-called single-side-band (ssb) modulation signals, which are written as
\[
f_{\mathrm{ssb}}(t) = f(t)\cos\omega_c t \mp \hat{f}(t)\sin\omega_c t,
\]
where $\omega_c \gg 1$ is called the carrier frequency. The forward transform becomes for $\omega>0$:
\[
F^{+}_{\mathrm{ssb}}(\omega) = F(\omega-\omega_c)/2 + i\hat{F}(\omega-\omega_c)/2 = F(\omega-\omega_c)\cdot \mathbf{1}_{[\omega_c,\infty]}(\omega),
\]
\[
F^{-}_{\mathrm{ssb}}(\omega) = F(\omega-\omega_c)/2 - i\hat{F}(\omega-\omega_c)/2 = F(\omega-\omega_c)\cdot \mathbf{1}_{[-\infty,\omega_c]}(\omega).
\]
This observation provides a simple interpretation of the Hilbert transform as the correction term that appears when the Fourier inversion is performed using one-sided frequency truncation. The construction of single-sideband modulation using the Hilbert transform provides a typical example of analytic-signal-based spectral localization widely used in communication theory (see, e.g., [15]). This example illustrates that the truncation-based generalized-limit inverse transform naturally accommodates one-sided spectral representations arising in practical signal-processing applications.

\textbf{Example 8. The Dirac delta-convergent sequence of functions.}

Another example is the so-called Dirac delta-convergent sequence of functions such as $f(t) = \lim_{\varepsilon\to0} f_\varepsilon(t)$ in the sense of the generalized limit, where $f_\varepsilon(t) = 1/\varepsilon$ for $|t|<\varepsilon/2$ and $f_\varepsilon(t)=0$ otherwise. This function is singular at $t=\varepsilon=0$. The generalized-limit Fourier transform for $f_\varepsilon(t)$ is obtained as:
\[
F(\omega) = \lim_{\varepsilon\to0} F_\varepsilon(\omega) = \lim_{\varepsilon\to0} \int_{-\infty}^{\infty} f_\varepsilon(t)\, e^{-i\omega t}\,dt = \lim_{\varepsilon\to0} \frac{2\sin\omega\varepsilon/2}{\omega\varepsilon} = 1,
\]
where $f_\varepsilon(t) \leftrightarrow F_\varepsilon(\omega)$. The inverse Fourier transform is:
\[
\overleftarrow{f}(t) = \frac{1}{2\pi} \lim_{\Omega\to\infty} \int_{-\Omega}^{\Omega} 1\cdot e^{i\omega t}\,d\omega = \lim_{\Omega\to\infty} \frac{\sin\Omega t}{\pi t}.
\]
The result is different from $f(t) = \lim_{\varepsilon\to0} f_\varepsilon(t)$. This fact has been shown previously in Example 1. If we use the generalized-limit inverse Fourier transform, we obtain the correct result as (refer to Eq.~(\ref{eq:7})):
\[
\overleftarrow{f}(t) = \lim_{\varepsilon\to0}\lim_{\Omega\to\infty} \frac{1}{2\pi}\int_{-\Omega}^{\Omega} \frac{2\sin(\omega\varepsilon/2)}{\omega\varepsilon}\, e^{i\omega t}\,d\omega
= \lim_{\varepsilon\to0}\lim_{\Omega\to\infty} \frac{1}{2\pi}\int_{-\Omega}^{\Omega} \frac{2\sin(\omega\varepsilon/2)}{\omega\varepsilon}\, \cos\omega t\,d\omega = \lim_{\varepsilon\to0} f_\varepsilon(t).
\]

\textbf{Example 9. Another delta-convergent sequence of functions $f(t) = \lim_{\Omega\to\infty} \sin(\Omega t)/\pi t$.}

This function does not converge for any $t$. Let us denote $f_\Omega(t)= \sin(\Omega t)/\pi t$. Then, $f_\Omega(t)$ is not absolutely integrable, but its Fourier transform exists as:
\[
F_\Omega(\omega) = \int_{-\infty}^{\infty} \frac{\sin\Omega t}{\pi t}\, e^{-i\omega t}\,dt = \int_{-\infty}^{\infty} \frac{\sin\Omega t}{\pi t}\, \cos(\omega t)\,dt =
\begin{cases}
1 & |\omega|<\Omega \\
0 & |\omega|>\Omega.
\end{cases}
\]
Thus, the generalized-limit Fourier transform is $F(\omega)=1$ as,
\[
F(\omega) = \lim_{\Omega\to\infty} F_\Omega(\omega) = 1 \qquad (-\infty<\omega<\infty).
\]
This result is clear if we note $\lim_{\Omega\to\infty} f_\Omega(t)= \delta(t)$. The ordinary inverse Fourier transform as the improper integral as well as the generalized-limit inverse Fourier transform yields the correct result, $\overleftarrow{f}(t) = \lim_{\Omega\to\infty} f_\Omega(t)$.

\subsection{Examples of functions without $L^1(\mathbb{R})$ for which truncation is not required}

The examples discussed so far, such as constants, polynomials, and periodic functions, require time truncation in order to define the forward transform through generalized limits. However, the use of truncation there is not an essential requirement of the t.g.l.\ formulation itself.

There exist oscillatory functions for which the ordinary improper Fourier integral already converges, although absolute convergence in the Lebesgue $L^1$ sense fails. The two examples below are examined against four classical notions of convergence: absolute ($L^1$) convergence, which fails for both; mean-square ($L^2$) convergence, which holds for the sinc function of Example~10 but fails for the chirp signal of Example~11; the sequential approach, which introduces no auxiliary regularizing sequence in either case; and the distributional definition, which is not invoked in either case, since the oscillatory integrals themselves are evaluated directly. If a damping factor such as $e^{-\sigma|t|}$ were introduced instead, both examples could equally be treated by evaluating the resulting absolutely convergent integral and taking the limit $\sigma \to 0^+$; and since both functions are bounded and therefore define tempered distributions, both could equally be treated by the distributional definition $\langle \widehat{T},\varphi \rangle = \langle T,\widehat{\varphi}\rangle$ through pairing with a test function $\varphi \in \mathcal{S}(\mathbb{R})$. Neither auxiliary device is required here, however, since the oscillatory integrals themselves already converge directly as ordinary improper Riemann integrals. Typical examples include sinc-type functions and quadratic-phase chirp signals.

\textbf{Example 10. Sinc function $f(t) = \sin(\omega_0 t)/(\pi t)$, $\omega_0>0$.}

This function is bounded and oscillatory but not absolutely integrable, since $|\sin(\omega_0 t)|$ does not decay and $\int_{-\infty}^{\infty} |\sin(\omega_0 t)/t|\,dt = \infty$. Nevertheless, the same computation used for $F_\Omega(\omega)$ in Example~9 shows that its Fourier transform converges directly, without time truncation, as an ordinary oscillatory improper integral:
\[
F(\omega) = \int_{-\infty}^{\infty} \frac{\sin\omega_0 t}{\pi t}\, e^{-i\omega t}\,dt = \int_{-\infty}^{\infty} \frac{\sin\omega_0 t}{\pi t}\, \cos(\omega t)\,dt =
\begin{cases}
1 & |\omega|<\omega_0 \\
0 & |\omega|>\omega_0.
\end{cases}
\]
Since $F(\omega)$ is itself compactly supported on $[-\omega_0,\omega_0]$, the inverse transform is likewise an ordinary integral over a finite interval, requiring no frequency-domain truncation either:
\[
\frac{1}{2\pi}\int_{-\omega_0}^{\omega_0} e^{i\omega t}\,d\omega = \frac{\sin\omega_0 t}{\pi t} = f(t).
\]
Thus the sinc function, like the chirp signal treated next, admits both its forward and inverse transform without truncation in either domain --- but here it is the compact support of $F(\omega)$, rather than oscillatory cancellation of an unbounded integrand, that removes the need for frequency-domain truncation on the inverse side. Although not absolutely integrable, the sinc function belongs to $L^2(\mathbb{R})$ (Remark following Proposition~3.1), and Proposition~3.1's classical $L^2$ convergence applies to it in the sharpest possible form, with the truncation error vanishing exactly for $\Omega \geq \omega_0$ rather than merely in the limit. Its transform is likewise obtained without appeal to a sequential regularizing factor or to the distributional duality pairing $\langle \widehat{T},\varphi\rangle = \langle T,\widehat{\varphi}\rangle$: the oscillatory integral above is evaluated directly.

\textbf{Example 11. Quadratic-phase chirp signal $f(t) = \exp(i\pi t^2)$.}

for which
\[
\int_{-\infty}^{\infty} |f(t)|\,dt = \infty,
\]
while the oscillatory Fresnel integral
\[
\lim_{T\to\infty} \int_{-T}^{T} e^{i\pi t^2}\,dt = e^{i\pi/4},
\]
exists in the improper oscillatory sense. The corresponding Fourier transform is given by
\[
F(\omega) = \exp(i\pi/4)\, \exp(-i\omega^2/4\pi),
\]
Nor does the chirp signal belong to $L^2(\mathbb{R})$: since $|f(t)|=1$ for all $t$, $\int_{-\infty}^{\infty}|f(t)|^2\,dt=\infty$, and likewise $\int_{-\infty}^{\infty}|F(\omega)|^2\,d\omega=\infty$ since $|F(\omega)|$ is also constant (Remark following Proposition~3.1). The chirp signal therefore lies outside both the $L^1$ and $L^2$ frameworks, and its treatment relies entirely on oscillatory cancellation.

The t.g.l. inverse transform is obtained, as for any admissible function, by truncating in both the time and frequency domains and taking the ordered limits $\Omega\to\infty$ followed by $T\to\infty$: for the truncated function $f_T(t) = e^{i\pi t^2}\mathbf{1}_{[-T,T]}(t)$, the truncated transform $F_T(\omega)$ is a truncated Fresnel integral with no elementary closed form (Appendix~C.1), and
\[
\overleftarrow{f}(t) = \lim_{T\to\infty}\lim_{\Omega\to\infty} \frac{1}{2\pi}\int_{-\Omega}^{\Omega} F_T(\omega)\, e^{i\omega t}\,d\omega = \exp(i\pi t^2)
\]
recovers $f(t)$ through the orthogonal localization of the Dirichlet kernel, exactly as in the general construction of Theorem~3.1; the full derivation, for the general chirp $e^{iax^2}$ of which this is the case $a=\pi$, is given in Appendix~C.1--C.3. Unlike the sequential approach, no auxiliary smoothing sequence such as $e^{-t^2/n^2}$ is introduced here; and unlike the distributional definition, the oscillatory integral $\int e^{i\pi t^2}\, e^{-i\omega t}\,dt$ itself is evaluated directly, through truncation and the ordered limit, rather than replaced by a duality pairing with a test function.

This dual-truncation route is not, however, the only way to recover $f(t)$ from $F(\omega)$ in this particular example. Since $F(\omega)$ is itself an oscillatory quadratic-phase function of the same type as $f(t)$, differing only by a sign change and rescaling in the exponent, the untruncated inverse integral
\[
\frac{1}{2\pi}\int_{-\infty}^{\infty} F(\omega)\, e^{i\omega t}\, d\omega
\]
converges directly by the same oscillatory-cancellation mechanism invoked for the forward transform above, and reproduces $f(t)$ exactly, without truncation in either domain (Appendix~C.4). This second route is available only because of the chirp's self-duality under the Fourier transform; it does not indicate that the general dual-truncation construction of Theorem~3.1 is dispensable for other admissible functions, most of which do not share this self-duality.

For comparison, consider the ordinary Gaussian function
\[
g(t) = \exp(-\pi t^2) \leftrightarrow G(\omega) = \exp(-\omega^2/4\pi),
\]
which belongs to $L^1(\mathbb{R})$ and admits an ordinary Fourier transform.

The Gaussian and the chirp illustrate two structurally opposite mechanisms by which a function can be self-dual under the Fourier transform. The Gaussian decays rapidly, is absolutely integrable, and its transform is obtained through ordinary absolutely convergent integration; Lebesgue and Riemann integration agree trivially in this case, and no oscillatory cancellation is involved. It is precisely this rapid decay and absolute integrability that make the Gaussian, and more generally the rapidly decreasing test functions of the Schwartz class it exemplifies, well suited to the Lebesgue-integral foundations of classical distribution theory. The chirp, by contrast, has constant modulus and no decay whatsoever; as shown above and in Appendix~C, it is nevertheless self-dual under the Fourier transform up to a sign change and rescaling of the quadratic-phase coefficient, but this self-duality is reached entirely through oscillatory cancellation of the orthogonal kernel $\exp(i\omega t)$ rather than through absolute convergence. The two functions therefore occupy opposite ends of the same classical dichotomy: the Gaussian is the fixed point of Fourier analysis under absolute convergence, while the chirp is its counterpart under oscillatory convergence, and only the improper Riemann integral --- not the Lebesgue integral --- is capable of reaching both within a single elementary, constructive framework.

The chirp signal of Example~11, together with the sinc function of Example~10, shows that absolute convergence in the Lebesgue $L^1$ sense is not a necessary condition for meaningful Fourier transforms when oscillatory cancellation or compact spectral support is present. For such oscillatory functions, the generalized Fourier transform can be defined directly without time truncation. This indicates that the essential feature of the t.g.l.\ formulation is not truncation itself, but the generalized limiting interpretation of classical oscillatory integrals. Time truncation is required only when such improper oscillatory limits do not exist.

Truncated chirp signals denoted as,
\[
f_T(t)=\exp(ik\pi t^2)\, \mathbf{1}_{[0,T]}(t), \qquad k:\ \text{constant}
\]
play an important role in wireless communications or Radar systems, by forming a narrow pulse signal to measure radio channel characteristics or to monitor objects by radio waves. The minimum time-resolution $\Delta t$ is proportional to the minimum space-resolution $\Delta d$ as $\Delta d=c\Delta t$, where $c$ represents the light velocity.

Narrow pulse radio wave signals are hard to generate since its peak power becomes high inversely proportional to the time-resolution $\Delta t$. A device to combat this problem is the matched linear filter (e.g.\ [15]) whose impulse response $h(t)$ is the time inverted form of the chirp signal pulse $f_T(t)$, i.e.,
\[
h(t)= f_T(T-t).
\]
Then the output signal $y(t)$ is the convolution integral as
\[
y(t) = \int_{-\infty}^{\infty} h(t-x)\, f_T(x)\,dx = \int_{-\infty}^{\infty} f_T(T-t+x)\, f_T(x)\,dx
\]
here $y(t)$ represents also the autocorrelation function of $f_T(t)$. The maximum peak appears at $t=T$.

A calculated example is shown in Figures 1 and 2.

\begin{figure}[htbp]
\centering
\includegraphics[width=0.75\textwidth]{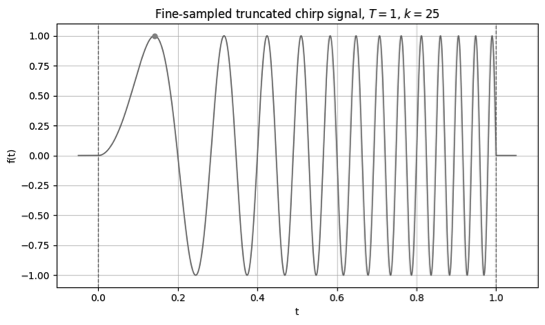}
\caption{Truncated chirp signal.}
\end{figure}

\begin{figure}[htbp]
\centering
\includegraphics[width=0.75\textwidth]{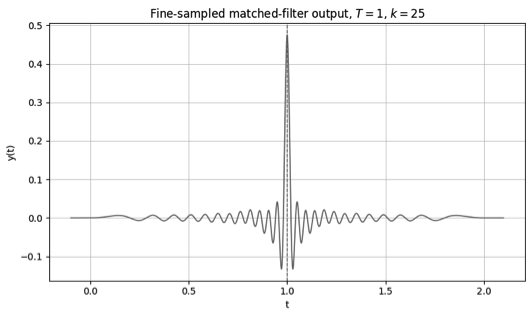}
\caption{Matched filter output signal.}
\end{figure}

The Fourier transform $Y(\omega)$ of $y(t)$ is given as:
\[
Y(\omega) = F_T(-\omega)\, e^{-i\omega T}\, F_T(\omega) = \big|F_T(\omega)\big|^2\, e^{-i\omega T}.
\]
As shown in Appendix B, the truncated chirp signal satisfies the reciprocal relation between time resolution $\Delta t$ and bandwidth $\Delta\omega$ as
\[
\Delta\omega\, \Delta t \approx 2\pi,
\]
which demonstrates that increasing the finite observation duration increases the effective spectral bandwidth and improves the achievable time-domain resolution after signal synthesis.

\section{Discussion}

The preceding sections introduced the t.g.l.\ formulation through explicit definitions, constructive examples, and comparisons with existing generalized Fourier frameworks. Since the essential operations involved in the present formulation consist only of ordinary truncation, ordinary Fourier integration, and ordered generalized limits, the mathematical procedures themselves may appear elementary from the viewpoint of classical analysis.

However, the necessity, effectiveness, and structural implications of this formulation are not necessarily evident from the operational definitions alone. In particular, the ability of the present framework to treat globally non-$L^1$ functions, locally singular functions, and divergent boundary contributions within a unified constructive scheme cannot be fully understood merely as a formal combination of truncation and limiting operations.

The purpose of this discussion section is therefore not to repeat the preceding constructions, but rather to examine the underlying mathematical structure of the t.g.l.\ formulation from a broader conceptual viewpoint. Particular attention is focused on reciprocal-domain localization, oscillatory cancellation, ordered dual-domain limits, asymmetry between forward and inverse transforms, and the constructive relation of the present formulation to generalized Fourier analysis beyond the classical $L^1(\mathbb{R})$ framework.

Cwikel's engineering-oriented notes [19] cite Feichtinger and Jakobsen's approach [17] as sharing similar goals --- namely, making distribution theory accessible to engineers and applied scientists --- while describing it as pursuing a somewhat different, more involved route: longer, more detailed, and requiring the reader to have some familiarity with Banach spaces, which Cwikel's own notes avoid. Despite this difference in prerequisite depth, both papers define the generalized Fourier transform through duality pairing, $\langle\hat{T},\varphi\rangle := \langle T,\hat\varphi\rangle$, applied identically regardless of direction. Because this definition never requires direct analysis of the original oscillatory integral --- the burden of convergence is placed entirely on the test function $\varphi$ --- neither approach has structural need to invoke the oscillatory cancellation of $\exp(i\omega t)$, its orthogonality (Section~6.1), or any asymmetry between forward and inverse transforms; duality pairing is, by construction, symmetric in both directions. The t.g.l.\ formulation is in this sense more radical: rather than delegating convergence to an auxiliary test function, it retains the original Fourier kernel throughout and derives generalized meaning directly from its elementary analytic properties --- oscillatory cancellation and orthogonality --- which become explicit precisely for functions outside $L^1(\mathbb{R})$, and which are responsible for the structural asymmetry between the forward and inverse transforms developed throughout this section.

The strategy of truncating a signal to a finite window and taking a limit also underlies the classical Wiener--Khinchin approach to spectral estimation, and it is worth being precise about how the two constructions differ, since the resemblance is easy to overstate. The Wiener--Khinchin theorem relates the power spectral density $S(\omega)$ to the Fourier transform of the autocorrelation function, and one standard derivation proceeds by truncating the signal to a finite window $[-T,T]$, forming the periodogram $|F_T(\omega)|^2/(2T)$, and letting $T\to\infty$. This truncation step is structurally similar to the one used throughout the present paper, but the object it acts on, and the object it produces, are fundamentally different: the periodogram is built from $|F_T(\omega)|^2$, a magnitude-squared quantity that discards the phase of the underlying signal, so the resulting power spectral density admits no inverse recovering $f(t)$ itself --- infinitely many signals share the same power spectrum. The t.g.l.\ construction, by contrast, truncates the signal directly and works throughout with the complex-valued transform $F_T(\omega)$ itself, never squared or otherwise stripped of phase; its central result, Theorem~3.1, is an exact pointwise inversion recovering $f(t)$ itself. The two constructions therefore share a common truncate-and-limit strategy while solving genuinely different problems: Wiener--Khinchin characterizes the statistical distribution of power across frequency, in a manner that is not invertible back to the signal, whereas t.g.l.\ defines and inverts the generalized Fourier transform of the signal itself.

\subsection{Reciprocal-domain Structure of Fourier Transforms}

The essential structure of the Fourier transform is determined by the orthogonal complex exponential basis functions
\[
\exp(i\omega t),
\]
where the variables $t$ and $\omega$ form a reciprocal Fourier-dual pair. If $t$ represents time and $\omega$ represents angular frequency, the phase quantity $\omega t$ is dimensionless. Thus the Fourier kernel expresses not merely a formal change of variables, but a reciprocal relation between two continuous domains.

The forward Fourier transform is generated through a single oscillatory pairing integral
\[
\int_{-\infty}^{\infty} f(t)\, e^{-i\omega t}\,dt,
\]
where the exponential kernel $\exp(-i\omega t)$ establishes the reciprocal pairing between the time variable $t$ and the frequency variable $\omega$.

By contrast, the inverse transform involves an additional oscillatory integration over the reciprocal frequency domain,
\[
\int_{-\infty}^{\infty}\int_{-\infty}^{\infty} f(t)\, e^{-i\omega t}\,dt\, e^{i\omega x}\,d\omega.
\]
Thus the inverse transform possesses a second-stage localization structure generated by the reciprocal exponential pair
\[
e^{-i\omega t}\, e^{i\omega x}.
\]
The product of these conjugate oscillatory factors produces localization through oscillatory cancellation, ultimately yielding the Dirichlet-type delta-convergent kernel
\[
\sin\Omega(x-t)/(x-t).
\]
From this viewpoint, the forward and inverse transforms are structurally asymmetric outside the classical $L^1(\mathbb{R})$ framework. The forward transform generally provides only generalized spectral meaning, whereas the inverse transform introduces an additional reciprocal-domain localization process.

The oscillatory cancellation effect originates fundamentally from the orthogonality structure of the exponential basis functions $\exp(i\omega t)$ defined over infinite reciprocal domains. In the present t.g.l.\ formulation, these infinite time and frequency domains themselves are intentionally preserved, while truncation procedures are applied only to the target functions or to the reciprocal-domain localization operations required for generalized convergence. Consequently, the present formulation maintains the intrinsic reciprocal-domain structure of ordinary Fourier analysis while introducing generalized meaning through ordered localization limits.

\subsection{Frequency-domain Truncation and Oscillatory Localization}

Classical Fourier inversion theory already contains an implicit localization mechanism generated by finite frequency-domain truncation. In Titchmarsh's treatment of Fourier's single-integral formula [3], the inverse transform is first evaluated over a finite frequency interval, after which the localization limit is taken. The resulting reciprocal exponential integration produces the Dirichlet-type kernel responsible for oscillatory localization and Fourier inversion.

However, in classical formulations the target functions are generally assumed to belong to function classes for which the forward Fourier transform is already meaningful in the ordinary sense. Consequently, only the frequency-domain truncation associated with inverse localization appears explicitly.

The frequency-domain truncation appearing in Fourier inversion is not merely an auxiliary approximation procedure, but an essential localization mechanism required to obtain meaningful inverse reconstruction results. Before the frequency-domain limit is taken, the inverse transform is represented by an ordinary finite-stage oscillatory integral over a bounded reciprocal domain. The reciprocal exponential integrations then generate the Dirichlet-type localization kernel responsible for oscillatory cancellation and localization.

Without finite frequency-domain truncation, the inverse Fourier integral generally fails to exist as an ordinary absolutely convergent integral even for functions within the classical $L^1(\mathbb{R})$ framework, since the Fourier transform itself need not belong to $L^1(\mathbb{R})$. The meaningful inverse reconstruction therefore emerges only through the localization limit associated with bounded reciprocal-domain integrations.

There exist special function classes, such as Gaussian functions, for which both the original function and its Fourier transform belong to $L^1(\mathbb{R})$, and in such cases explicit truncation procedures are not required because both forward and inverse Fourier integrals converge absolutely. However, this simultaneous absolute integrability in both reciprocal domains is highly restrictive and is not satisfied for many important classes of functions appearing in generalized Fourier analysis.

The chirp signal $f(t) = e^{i\alpha t^2}$ $(\alpha>0)$ constitutes a similar special case, in which neither the forward nor the inverse transform requires truncation (Section~5.3; Appendix~C, including the direct untruncated verification in C.4). Convergence is reached here by an entirely different mechanism than for the Gaussian, however: the chirp is not absolutely integrable in either domain, and both integrals converge instead through oscillatory cancellation of the orthogonal kernel $\exp(i\omega t)$ (Section~6.1). The Gaussian and the chirp are therefore complementary illustrations of the same point developed in Section~6.7: absolute integrability and oscillatory cancellation are two distinct routes by which truncation becomes unnecessary, and only the improper Riemann integral accommodates both within a single framework.

The localization of the Dirichlet kernel is achieved through oscillatory cancellation. Namely, for $x \ne 0$, the positive and negative oscillations of $\sin(\Omega x)/x$ cancel each other under integration as $\Omega$ increases, while around $x=0$ this cancellation does not occur. If the truncated function is piecewise continuous and piecewise continuously differentiable, this delta-convergent localization leads to the classical Fourier inversion formula, recovering the truncated function at every continuity point.

Furthermore, for each fixed truncation parameter $T$, the truncated function $f_T(t)$ belongs to $L^1(\mathbb{R})$, and therefore its Fourier transform $F_T(\omega)$ satisfies the Riemann--Lebesgue lemma, namely $F_T(\omega)$ tends to $0$ as the absolute value of $\omega$ tends to infinity. Therefore, the spectral tail omitted by the finite frequency-domain truncation becomes asymptotically negligible as $\Omega$ increases.

Outside the $L^1(\mathbb{R})$ framework, although each truncated function $f_T(t)$ belongs to $L^1(\mathbb{R})$, the convergence with respect to $\Omega$ is generally only pointwise for each fixed $T$, and not uniform with respect to $T$. Indeed, the inverse reconstruction remains
\[
f_{T,\Omega}(t) = \int_{-\infty}^{\infty} f_T(x)\, K_\Omega(t-x)\,dx,
\]
and for finite $\Omega$ the oscillatory cancellation of the Dirichlet kernel remains incomplete. Therefore, contributions from remote points $x$ not equal to $t$ remain in the convolutional integral, and increasingly wider portions of the truncated function contribute as $T$ increases, and localization becomes incomplete.

For this reason, the limits with respect to $\Omega$ and $T$ are generally not interchangeable outside the $L^1(\mathbb{R})$ framework. The present t.g.l.\ formulation succeeds by first fixing the truncation parameter $T$ so that the delta-convergent localization process is completed through the frequency-domain limit $\Omega$ tending to infinity, and only thereafter taking the generalized truncation limit $T$ tending to infinity. In this way, generalized Fourier meaning is recovered through ordinary Fourier inversion and oscillatory delta localization at each finite truncation stage, without requiring uniform convergence with respect to the truncation parameter.

The oscillatory localization mechanism generated by the reciprocal exponential kernel $\exp(i\omega(x-t))$ may also be interpreted from the viewpoint of matched filtering in signal processing. When $x \ne t$, the phase of the exponential factor varies rapidly with respect to $\omega$, and the oscillatory contributions cancel mutually over the reciprocal frequency domain. By contrast, when $x=t$, the phase becomes stationary and coherent accumulation occurs. A closely analogous mechanism appears in the matched-filter output or autocorrelation function of chirp signals.

\subsection{Ordered Dual-domain Limits and Generalized Fourier Meaning}

The ordered limiting structure in the present t.g.l.\ formulation is not introduced merely as a technical convention, but arises naturally from the mathematical structure of reciprocal oscillatory integrations themselves.

For finite truncation parameters, the time-domain and frequency-domain integrations are ordinary finite-stage integrals, and their interchange is justified before any infinite-domain limit is taken. The reciprocal exponential integrations then generate the Dirichlet-type localization kernel responsible for inverse reconstruction.

Even within the classical $L^1(\mathbb{R})$ framework, the inverse frequency-domain integral is not generally justified by absolute convergence, since the Fourier transform itself need not belong to $L^1(\mathbb{R})$. Instead, Fourier inversion is obtained through a localization limit associated with bounded frequency-domain integrations.

Within the classical $L^1(\mathbb{R})$ setting, however, the time-domain truncation limit becomes harmless because the truncated functions converge in $L^1(\mathbb{R})$. Consequently, the ordered dual-domain structure remains largely hidden behind the ordinary inversion formula.

Outside the classical framework, however, the convergence with respect to the frequency-domain localization parameter is generally not uniform in the time-domain truncation parameter. Consequently, the interchangeability of the integrations and limits is no longer guaranteed, and the order of the limiting processes becomes essential. In particular, the frequency-domain localization limit must first be completed before removal of the original-domain truncation.

The ordered localization structure may also be interpreted as a natural continuous extension of the orthogonal reconstruction procedure underlying Fourier-series expansions, where finite-stage reconstruction is completed before the asymptotic limit is taken.

Generalized Fourier meaning therefore emerges not from ordinary pointwise convergence alone, but through ordered reciprocal-domain localization processes associated with iterated pairing integrations.

\subsection{Unified Constructive Treatment of Non-$L^1$ and Singular Functions}

The present t.g.l.\ formulation provides a unified constructive framework for treating globally non-decaying functions, locally singular functions, oscillatory non-convergent functions, and divergent boundary contributions within ordinary Fourier integral operations. The framework extends further to rapidly increasing functions such as $e^{\alpha t^2}$ $(\alpha > 0)$ that lie outside the space $\mathcal{S}'(\mathbb{R})$ of tempered distributions entirely, as illustrated in Example~5. For any locally integrable function, the truncated function $f_T \in L^1(\mathbb{R})$ for each finite $T$, so the forward transform family $\{F_T(\omega)\}$ is always well defined at the finite-stage level; whether the ordered limits converge depends on the availability of suitable auxiliary functions for pairing.

For non-decaying functions, explicit truncation in the original domain produces finite-stage Fourier integrals while preserving the original oscillatory basis structure. For locally singular functions such as
\[
1/t,\quad 1/(t-t_0),\quad 1/t^2,
\]
local exclusion of singular neighborhoods allows ordinary improper Riemann integrations to remain well defined over the remaining domains.

In contrast to distribution theory, the present formulation does not begin from abstract generalized functionals or singular pairing operations. Instead, ordinary Fourier integrations are first preserved over finite domains, and generalized meaning emerges afterward through ordered generalized limits in reciprocal domains.

This geometric truncation structure allows globally non-$L^1$ behavior and locally singular behavior to be treated within the same constructive framework. The present formulation therefore provides a unified operational interpretation of generalized Fourier transforms beyond the classical $L^1(\mathbb{R})$ setting while preserving ordinary oscillatory Fourier integrals throughout the formulation.

\subsection{Physical and Operational Interpretations}

In practical applications of Fourier analysis, signals are never observed over infinite domains. Every physical measurement is necessarily restricted by finite observation intervals and finite spectral bandwidths determined by experimental duration, sensor memory, computational resources, causality, and limitations of measurement apparatus.

\begin{remarks}[Physical unit interpretation of the forward/inverse asymmetry]
The structural asymmetry between the forward and inverse transforms is concretely illustrated by a unit analysis. If $f(t)$ represents a physical signal such as voltage, with units of volts~(V) and $t$ measured in seconds~(s), then the forward transform
\[
F_T(\omega) = \int_{-\infty}^{\infty} f_T(t)\,e^{-i\omega t}\,dt
\]
integrates a quantity in V over a variable in s, yielding units of V$\cdot$s~=~V/Hz: a \emph{spectral density}. The inverse transform
\[
f(t) = \lim_{T\to\infty}\lim_{\Omega\to\infty}\frac{1}{2\pi}\int_{-\Omega}^{\Omega} F_T(\omega)\,e^{i\omega t}\,d\omega
\]
integrates a spectral density in V/Hz over a variable in rad/s (which has units of Hz), recovering a quantity in volts. Thus the forward transform maps a signal to its spectral density, while the inverse transform maps a spectral density back to a signal. The two operations are physically distinct: the forward transform is a spectral analysis, and the inverse transform is a spectral synthesis. This physical asymmetry reflects the mathematical distinction between the forward transform, which distributes signal energy across the frequency domain, and the inverse transform, which recovers pointwise values through the oscillatory localisation of the Dirichlet kernel. A detailed discussion of this unit analysis and the relation between the spectrum function and the spectral density function is given in Appendix~A.
\end{remarks}

Consequently, truncation is not merely a mathematical regularization device, but a natural operational starting point for Fourier analysis in physical systems. In the time domain, observed signals are represented over finite observation windows, while in the frequency domain spectral information is necessarily restricted by finite bandwidths of filters, amplifiers, antennas, optical systems, and digital devices.

From the viewpoint of signal processing, the t.g.l.\ formulation may also be interpreted as a finite-information analysis and synthesis framework. In the forward transform, finite-time observations generate finite-stage spectral analysis, whereas the inverse transform reconstructs signals through finite-band orthogonal synthesis. The reconstruction distortion produced by finite bandwidth is progressively equalized through asymptotic enlargement of the reciprocal-domain localization interval.

The present formulation therefore provides not only a constructive mathematical framework for generalized Fourier analysis, but also a natural operational interpretation closely connected to physical observation, signal synthesis, matched filtering, spectral estimation, and finite-information processing.

The examples presented in this paper reveal a common reciprocal structure inherent in Fourier-dual domains. In the linear monitoring system discussed in Section 5.1, the effective time resolution $\varepsilon$ of the observation system was shown to be approximately given by
\[
\Omega\varepsilon \approx 2\pi
\]
where $\Omega$ denotes the effective bandwidth (in angular frequency) of the monitoring system. This relation indicates that an increase in observable bandwidth directly improves the time-domain resolution of the measured signal.

A similar reciprocal relation appears in the truncated linear chirp signal [Appendix B].
\[
2\pi B\, \Delta t \approx 2\pi,
\]
where $\Delta t$ and $B$ are time resolution and chirp signal bandwidth, respectively. Thus, increasing the truncation duration increases the effective spectral bandwidth and improves the time-domain resolution after signal synthesis. This relation is an instance of the time-frequency uncertainty principle; see, e.g., Papoulis~[13], Ch.~4.

A similar reciprocal phenomenon also appears in sinc-type signals. An ideal sinc function possesses infinite support in the time or space domain, while its spectrum is strictly limited within a finite bandwidth. Conversely, when such a signal is observed through a monitoring system having limited bandwidth, the oscillatory spreading over the entire domain is effectively compressed into a narrow localized interval determined by the observation bandwidth.

This reciprocal localization caused by finite-band observation is conceptually analogous to the measurement principle in quantum theory, where the act of observation introduces localization through reciprocal-domain constraints.

Taken together, the properties examined in this section and elsewhere in the paper --- orthogonality and completeness of the Fourier kernel (Sections~4.3, 6.1), the reciprocal-domain structure linking time and frequency (Section~6.1), the explicit role of ordered limits (Sections~2.4, 3.2), and the structural asymmetry between forward and inverse transforms (Section~3, Theorem~3.4) --- remain implicit within the abstract duality-based definition of distribution theory, where they are consequences that can in principle be derived but are not required by the definition itself. In the t.g.l.\ formulation, by contrast, each of these properties is exhibited explicitly, as a direct consequence of working with the Fourier kernel and its truncations throughout. This explicitness is of practical value beyond conceptual clarity: in the analysis and design of real physical systems --- linear time-invariant filters, matched-filter receivers, and sampling and reconstruction systems --- these same properties (eigenfunction behavior, orthogonal decomposition, time-frequency reciprocity, and the asymmetric roles of analysis and synthesis) are precisely the quantities an engineer must reason about directly, making the constructive route's explicit treatment of them a natural match for such applications.

\subsection{Limiting Structures Underlying Generalized Fourier Transforms}

The mathematical concept of limit plays an indispensable role whenever quantities associated with unbounded domains or locally singular points are treated. Infinity points of unbounded domains and singular points of non-integrable functions do not correspond to ordinary finite points of observation or computation, but acquire mathematical meaning only through limiting operations. This principle is implicitly adopted in classical improper Fourier integrals, principal-value and finite-part integrals, sequential generalized-function approaches, and distributional Fourier definitions.

The present t.g.l.\ formulation makes this common limiting structure explicit by truncating the target functions themselves in Fourier-dual domains and recovering generalized Fourier meaning through ordered limits of classically well-defined integrals.

Accordingly, when ordinary Fourier integral operations are preserved, truncation together with ordered limiting processes becomes a natural constructive mechanism for separating finite integral operations from the limiting behaviors responsible for generalized Fourier meaning.

Within the present formulation, the Fourier integrations themselves remain entirely classical at every finite truncation stage, while the generalized character arises only through the ordered generalized limits associated with reciprocal Fourier domains.

From this viewpoint, the t.g.l.\ formulation may be interpreted not as an ad hoc regularization procedure, but as a constructive framework naturally adapted to the limiting structures underlying generalized Fourier transforms beyond the classical $L^1(\mathbb{R})$ setting.

\subsection{Suitability of Improper Riemann Integration for Oscillatory Fourier Integrals}

The t.g.l.\ formulation is based on improper Riemann integrals rather than Lebesgue integrals. This choice is not a limitation but a deliberate decision arising from the oscillatory nature of the Fourier integral.

The Lebesgue integral requires absolute integrability: it decomposes $\int f\,dt = \int f^+\,dt - \int f^-\,dt$ and demands both parts to be finite separately, so it exists only when $\int|f|\,dt < \infty$. This means it cannot handle functions whose integrals converge through cancellation between positive and negative contributions --- precisely the mechanism on which Fourier integrals of non-$L^1$ functions rely. This cancellation is not an incidental feature of particular examples but originates from the orthogonality of the exponential kernel $\exp(i\omega t)$ over the infinite reciprocal domain, as discussed in Section~6.1; improper Riemann integration is precisely the classical framework capable of accommodating convergence generated by this orthogonality, whereas Lebesgue integration is deliberately insensitive to it. The Fourier integral is fundamentally an oscillatory integral. For functions outside $L^1(\mathbb{R})$, convergence of
\[
\int_{-\infty}^{\infty} f(t)\,e^{-i\omega t}\,dt
\]
depends crucially on oscillatory cancellation. A classical example is the Dirichlet integral
\[
\int_0^{\infty} \frac{\sin\omega T}{\omega}\,d\omega = \frac{\pi}{2},
\]
which converges as an improper Riemann integral through oscillatory cancellation, but does not exist as a Lebesgue integral since $\sin\omega T/\omega \notin L^1(0,\infty)$. Similarly, the Fourier transforms of constants, polynomials, periodic functions, and chirp signals --- all central examples in this paper --- depend on oscillatory cancellation that the Lebesgue framework cannot capture directly. Distribution theory addresses this by replacing the Fourier integral with duality pairings, moving away from the integral representation. The t.g.l.\ formulation instead retains the ordinary Fourier integral throughout, controlling the oscillatory cancellation explicitly through truncation and ordered limits.

A specific consequence of this mechanism deserves emphasis, as it directly addresses a natural objection from the Lebesgue perspective. When the inverse Fourier integral is evaluated over a finite frequency band $[-\Omega, \Omega]$, the result is a convolution with the Dirichlet kernel:
\[
\frac{1}{2\pi}\int_{-\Omega}^{\Omega} F_T(\omega)\,e^{i\omega t}\,d\omega
= \int_{-\infty}^{\infty} f_T(x)\,\frac{\sin\Omega(t-x)}{\pi(t-x)}\,dx.
\]
The Dirichlet kernel $D_\Omega(t) = \sin(\Omega t)/(\pi t)$ is not integrable in the Lebesgue sense, so from the Lebesgue perspective this reconstruction may appear ill-defined. However, within the improper Riemann framework, the integral exists at every finite stage because $f_T \in L^1(\mathbb{R})$ and the kernel is bounded. The limit $\Omega \to \infty$ recovers $f_T(t)$ pointwise through oscillatory cancellation --- positive and negative oscillations of $\sin\Omega(t-x)/(\pi(t-x))$ cancel everywhere except at $x = t$. This is precisely the classical Dirichlet convergence theorem for improper Riemann integrals, not a statement about Lebesgue convergence. The Dirichlet kernel is therefore not an obstacle in the t.g.l.\ framework but the engine of pointwise reconstruction, operating through oscillatory cancellation in the improper Riemann sense.

The improper Riemann integral with explicit truncation and ordered limits is therefore suited for the unified constructive treatment of both non-decaying and locally singular functions, as described in Section~6.4.

The sensitivity of the improper Riemann integral to the arrangement of function values --- a property the Lebesgue integral deliberately removes --- is precisely what makes it better adapted to the oscillatory Fourier integral. The choice of improper Riemann integration in the present formulation is not a departure from rigor, but a deliberate alignment with the oscillatory nature of the Fourier integral, allowing generalized Fourier transforms to be constructed directly from ordinary Fourier integrals without abandoning the classical integral representation. This view is independently supported by the work of Feichtinger and Jakobsen \cite{17}, who have demonstrated that a rigorous framework for distribution theory and Fourier analysis can be built on Riemann integrals rather than Lebesgue integrals.

In the classical theory, interchange of integrations over infinite domains is commonly justified by Fubini's theorem within the Lebesgue framework, provided the required integrability conditions are satisfied. The present formulation takes a different route: for any finite truncation parameters $T$ and $\Omega$, all integrations are ordinary integrals over bounded intervals, so the order of integration is established at this finite stage without appealing to Lebesgue theory. The generalized Fourier transform is then obtained through the prescribed ordered limits of the truncation parameters. Pointwise convergence is thus derived from ordinary truncated Fourier integrals and ordered limits, without requiring the Lebesgue-integral formulation of Fubini's theorem for improper integrals. In this sense, the generalized aspect of the theory arises from the limiting process itself rather than from extending the notion of integration.

\section{Conclusion}

This paper has presented a unified constructive formulation of generalized Fourier transforms based on the truncate-and-generalized-limit (t.g.l.) approach beyond the classical $L^1$ framework, where functions show non-decaying or local singular characteristics.

A characteristic feature of the formulation is its inherent asymmetry between the forward and inverse transforms, as well as its two-parameter structure associated with time- and frequency-domain truncations. Outside the classical $L^1$ framework, these ordered limits are in general not interchangeable, and consistent reconstruction is obtained through an ordered-limit procedure reflecting an intrinsic structural property of truncated Fourier inversion.

A central observation of the present work is that the pointwise existence of the forward Fourier transform is not the essential criterion for Fourier analysis beyond the classical $L^1$ framework. Instead, the family of truncated Fourier transforms is interpreted as a first-order generalized-limit family, for which pointwise convergence in the frequency domain is not required. Generalized spectral meaning emerges through a second-order generalized limit in Fourier-dual domains, namely through pairing of the first-order transform family with an auxiliary function defined on the dual domain. The inverse Fourier transform is interpreted as a special case of such dual-domain pairings, which provides a constructive explanation of pointwise reconstruction.

Because the formulation preserves the orthogonality of exponential basis functions on the real line, the inverse transform may be interpreted as a continuous-frequency extension of Fourier's original orthogonal expansion principle, thereby providing a continuous-frequency counterpart of Fourier's original orthogonal expansion principle while remaining structurally distinct from finite-interval Fourier-series limits, in the sense that only the target functions are truncated by finite-interval windows, whereas the orthogonal basis functions and Fourier-dual domains remain unchanged, and ordered generalized limits are taken outside the integral operations.

The present formulation remains within an explicit integral-based representation and clarifies its structural distinction from distribution theory by providing a unified constructive treatment of globally non-$L^1$, oscillatory, and locally singular functions. It also provides an explicit exposure of an essential mathematical structure that is implicitly contained in both classical and distributional Fourier formulations for non-$L^1$ functions, namely the duality of domains in Fourier transforms, owing to the orthogonal basis functions $\exp(i\omega t)$.

The examples treated in this paper suggest that the proposed dual-domain t.g.l.-admissible class includes not only ordinary $L^1$ functions, but also constants, polynomials, periodic functions, singular functions, delta-convergent approximation families, and oscillatory functions such as sinc-type functions and quadratic-phase chirp signals.

As a byproduct of this comparative positioning, the paper also provides a self-contained overview of the principal historical and modern approaches to generalized Fourier transform theory --- including improper Riemann and Lebesgue integration, Plancherel--Riesz $L^2$ theory, Schwartz distributions, Gel'fand--Shilov spaces, Lighthill's sequential approach, Feichtinger's $S_0(\mathbb{R})$ framework, and principal-value and Hadamard finite-part regularization --- set against the constructive t.g.l.\ formulation developed throughout.

Future work includes the mathematical characterization of the dual-domain t.g.l.-admissible class, its relation to existing harmonic-analysis frameworks, and the introduction of smooth truncation or localization kernels for further theoretical refinement.

\section*{Acknowledgement}

The author owes the motivation for this paper to the late Prof.~Y. Takahashi and is deeply grateful to him for his inspiring comments.

The author also thanks Emeritus Prof.~A. Yoshikawa and Prof.~K. Kajiwara of Kyushu University for their valuable suggestions and for recommending helpful reference books.

The author is deeply grateful to Prof.~H. Feichtinger for providing useful comments and introducing important articles on modern Fourier analysis, which improved this manuscript and the author's understanding.

The author acknowledges the use of an AI-based interactive discussion tool during the preparation of this manuscript. The discussions were helpful in improving the English writing, the clarity of exposition of mathematical ideas, and the presentation of related background concepts. All mathematical formulations, interpretations, and conclusions presented herein remain solely those of the author.

This study is partially supported by Microwave Lab Inc.\ in Munakata-city.

\section*{Appendix A: Fourier Integrals as a Transition from Fourier Series}

The Fourier series of a function $f(t)$ can be viewed as an orthogonal-function expansion on the finite domain $t \in [-T,T]$ (private communication from the late Prof.\ Y. Takahashi), as
\[
f(t) = \sum_{n=-\infty}^{\infty} C(n\omega_0)\, e^{in\omega_0 t} \qquad \left(\omega_0 = \frac{2\pi}{2T},\ -T \le t \le T\right),
\]
where $C(n\omega_0)$ denotes the Fourier coefficients, defined by
\[
C(n\omega_0) = \frac{1}{2T} \int_{-T}^{T} f(t)\, e^{-in\omega_0 t}\,dt \qquad (n=0,\pm1,\pm2,\dots).
\]

When $T \gg 1$, the frequency step $\omega_0$ becomes small; we denote it by $\Delta\omega$. Then we obtain
\[
f(t) = \sum_{n=-\infty}^{\infty} C(n\Delta\omega)\, e^{in\Delta\omega t}, \qquad
C(n\Delta\omega) = \frac{1}{2T} \int_{-T}^{T} f(t)\, e^{-in\Delta\omega t}\,dt.
\]
As $T \to \infty$, the variable $n\Delta\omega$ becomes continuous; we denote it by $\omega$. Thus we obtain
\[
C(\omega) = \lim_{T\to\infty} C(n\Delta\omega) = \lim_{T\to\infty} \frac{1}{2T}\, F_T(\omega),
\]
where
\[
F_T(\omega) = \int_{-T}^{T} f(t)\, e^{-i\omega t}\,dt.
\]
The Fourier transform is defined by $F(\omega) = \lim_{T\to\infty} F_T(\omega)$. We call $C(\omega)$ the \emph{spectrum function}, and $F(\omega)$ the \emph{spectral density function}. Let $C_T(\omega) = C(n\Delta\omega) = F_T(\omega)/2T$, $\omega = 2\pi f$, and $df = 1/2T$; then $F_T(2\pi f) = C_T(2\pi f)/df$. If $t$ denotes time and $f(t)$ a physical variable such as voltage, then $C_T(2\pi f)$ has the unit of volts, while $F_T(2\pi f)$ has the unit of volts per hertz.

The Fourier series approaches the Fourier integral as
\[
f(t) = \lim_{T\to\infty} \sum_{n=-\infty}^{\infty} C(n\Delta\omega)\, e^{in\Delta\omega t}
= \lim_{T\to\infty} \frac{1}{\Delta\omega} \sum_{n=-\infty}^{\infty} C(n\Delta\omega)\, e^{in\Delta\omega t}\, \Delta\omega
\approx \lim_{T\to\infty} \frac{1}{2\pi} \int_{-\infty}^{\infty} F_T(\omega)\, e^{i\omega t}\,d\omega.
\]
The approximation symbol above reflects two distinct convergence requirements that are silently coupled through the single parameter $T$: the discrete sum must converge to the continuous integral as the frequency spacing $\Delta\omega = \pi/T$ shrinks (a Riemann-sum-to-integral limit), while at the same time the truncated transform $F_T(\omega)$ must converge to $F(\omega)$, as discussed below. Classical treatments do not distinguish these two requirements, since both are driven by the same limit $T\to\infty$; the t.g.l.\ formulation of the present paper instead separates such roles into explicitly named and independently ordered parameters ($T_1,T_2\to\infty$, $\Omega\to\infty$, $\varepsilon_1,\varepsilon_2\to0$), as developed in Section~2.6 and Theorem~3.4.

For non-absolutely integrable functions, the limit $T\to\infty$ cannot be moved inside the integrand, because $\lim_{T\to\infty} F_T(\omega)$ does not converge. However, for absolutely integrable functions this interchange is valid and leads to the ordinary definition of the inverse Fourier transform:
\[
f(t) = \lim_{T\to\infty} \frac{1}{2\pi} \int_{-\infty}^{\infty} F_T(\omega)\, e^{i\omega t}\,d\omega
= \frac{1}{2\pi} \int_{-\infty}^{\infty} \lim_{T\to\infty} F_T(\omega)\, e^{i\omega t}\,d\omega
= \frac{1}{2\pi} \int_{-\infty}^{\infty} F(\omega)\, e^{i\omega t}\,d\omega.
\]

The traditional treatment yields the same result by regarding $f(t)$ as a periodic function of period $2T$,
\[
f(t) = \sum_{n=-\infty}^{\infty} f_T(t-2nT) \qquad (-\infty<t<\infty),
\]
where $f_T(t)=0$ for $|t|>T$, and then taking $T\to\infty$. The extension of the domain of $f(t)$ as a periodic function from $t\in[-T,T]$ to $t\in(-\infty,\infty)$ is natural, since trigonometric functions are themselves periodic; nevertheless, one may feel that starting from $t\in[-T,T]$, as above, is the more direct route.

If the variable $f$ is used in place of $\omega$, the factor $1/2\pi$ disappears from the inverse Fourier transform. In that case, the alternative convention using $1/\sqrt{2\pi}$ for both the forward and inverse transforms loses its symmetrical appearance. We therefore prefer the convention adopted in this paper.

The Fourier series expansion rests on the orthogonality of trigonometric functions on a finite interval,
\[
\frac{1}{2T}\int_{-T}^{T} e^{im\omega_0 t}\, e^{-in\omega_0 t}\,dt = \delta[m,n] \qquad (m,n=0,\pm1,\pm2,\dots),
\]
where the Kronecker delta $\delta[m,n]=1$ for $m=n$ and $0$ otherwise. As $T\to\infty$, writing $m\omega_0=\omega_m$ and $n\omega_0=\omega_n$, we obtain
\[
\lim_{T\to\infty} \frac{1}{2T}\int_{-T}^{T} e^{i\omega_m t}\, e^{-i\omega_n t}\,dt
= \lim_{T\to\infty} \frac{\pi}{T}\, \delta_T(\omega_m-\omega_n) = \delta[m,n], \qquad \delta_T(\omega) = \frac{\sin\omega T}{\pi\omega}.
\]
Thus $\lim_{T\to\infty} \pi\,\delta_T(\omega_m-\omega_n)/T$, which involves the delta-convergent sequence of functions $\delta_T$, expresses the orthogonality of $\exp(i\omega t)$ on the infinite interval.

It is worth noting that the t.g.l.\ integral truncates the \emph{functions} themselves rather than the time domain; the transforms are accordingly defined through integrals rather than through the summations characteristic of Fourier series.

\section*{Appendix B: Derivation of the Truncated Chirp Signal Reciprocal Relationship}

The Fourier transform of the truncated linear chirp signal $s_T(t) = \exp(i\pi k t^2)\,\mathbf{1}_{[0,T]}(t)$ is given by
\[
F_T(\omega) = \int_{0}^{\infty} e^{i(\pi k t^2 - \omega t)}\, \mathbf{1}_{[0,T]}(t)\,dt = \int_{0}^{T} e^{i(\pi k t^2 - \omega t)}\,dt.
\]
The phase function of this oscillatory integral is
\[
\theta(t) = \pi k t^2 - \omega t.
\]
As already established in Appendix~C for the truncated Fourier transform of a chirp signal, $F_T(\omega)$ is a truncated Fresnel-type integral and does not admit an elementary closed-form expression for finite $T$; exact evaluation is therefore not pursued here. Instead, its effective bandwidth is characterized asymptotically, without requiring a closed-form evaluation, by locating the dominant contribution to the oscillatory integral. According to the stationary phase theorem [16], the dominant contribution to the oscillatory integral arises from stationary points satisfying $d\theta(t)/dt=0$. Since
\[
\frac{d\theta(t)}{dt} = 2\pi k t - \omega,
\]
the stationary point is given by
\[
t_s = \frac{\omega}{2\pi k}.
\]
Because the truncated chirp exists only on the finite interval $0\le t\le T$, the stationary point contributes only when
\[
0 \le \frac{\omega}{2\pi k} \le T.
\]
Therefore the effective angular-frequency bandwidth is
\[
\Delta\omega \approx 2\pi k T, \qquad \text{or equivalently} \qquad B = \frac{\Delta\omega}{2\pi} \approx kT.
\]

The output of the matched filter is given by the autocorrelation of the chirp signal,
\[
R_T(\tau) = \int_{0}^{T} s_T(t)\, s_T^{*}(t-\tau)\,dt,
\]
where $*$ denotes complex conjugation. Substituting $s_T(t) = \exp(i\pi k t^2)$ gives
\[
s_T(t)\, s_T^{*}(t-\tau) = \exp(i\pi k t^2)\, \exp[-i\pi k (t-\tau)^2].
\]
Expanding the quadratic term yields
\[
t^2 - (t-\tau)^2 = 2t\tau - \tau^2,
\]
so that
\[
s_T(t)\, s_T^{*}(t-\tau) = \exp(-i\pi k \tau^2)\, \exp(i2\pi k \tau t).
\]
Thus the matched-filter output becomes
\[
R_T(\tau) = e^{-i\pi k \tau^2} \int_{0}^{T} e^{i2\pi k \tau t}\,dt.
\]
Performing the integration gives
\[
R_T(\tau) = T\, e^{i\pi k \tau (T-\tau)}\, \frac{\sin(\pi k T \tau)}{\pi k T \tau}.
\]
Hence the magnitude of the matched-filter output is
\[
|R_T(\tau)| \approx T\, |\mathrm{sinc}(kT\tau)|, \qquad \mathrm{sinc}(x) = \frac{\sin(\pi x)}{\pi x}.
\]
The first zero occurs at
\[
\tau = \frac{1}{kT},
\]
so that the achievable time resolution is
\[
\Delta t \approx \frac{1}{kT}.
\]
Since $B \approx kT$, the reciprocal relation becomes
\[
B\,\Delta t \approx 1, \qquad \text{or equivalently} \qquad \Delta\omega\,\Delta t \approx 2\pi.
\]
This relation shows that increasing the pulse duration $T$ increases the effective signal bandwidth, resulting in improved time-domain resolution after signal synthesis.

\section*{Appendix C: Derivation of the Fourier Transform of a Chirp Signal}

Section~5.3 states the generalized Fourier transform of the quadratic-phase chirp signal $f(t) = \exp(i\pi t^2)$ without giving the intermediate derivation. This appendix supplies that derivation for the general chirp $f(x) = e^{iax^2}$ $(a>0)$, of which the example in Section~5.3 is the special case $a=\pi$. The derivation is included as a separate appendix because, unlike the elementary examples treated in the main text, the truncated Fourier integral of a chirp signal does not reduce to a closed-form elementary expression at any finite truncation stage: it remains a truncated Fresnel integral throughout, and the t.g.l.\ formulation is applied directly to this integral rather than to an antiderivative. This illustrates that the t.g.l.\ procedure is operational even when no elementary closed form is available at finite truncation, and not merely a formal definition evaluated after the fact through known closed-form transform pairs.

Since $f \notin L^1(\mathbb{R})$, its Fourier transform is interpreted in the t.g.l.\ sense.

\subsection*{C.1 Truncated Fourier Transform}

Define
\[
F_T(\omega) = \int_{-T}^{T} e^{iax^2}\, e^{-i\omega x}\, dx.
\]
Complete the square:
\[
ax^2 - \omega x = a\left(x - \frac{\omega}{2a}\right)^2 - \frac{\omega^2}{4a}.
\]
Hence
\[
F_T(\omega) = e^{-i\omega^2/4a} \int_{-T}^{T} e^{ia\left(x-\frac{\omega}{2a}\right)^2}\, dx.
\]
Introduce $u = x - \omega/2a$. Then
\[
F_T(\omega) = e^{-i\omega^2/4a} \int_{-T-\omega/2a}^{\,T-\omega/2a} e^{iau^2}\, du.
\]
This integral is a truncated Fresnel integral. Unlike many elementary examples treated elsewhere in this paper, it cannot generally be reduced to an elementary closed-form expression for finite $T$. The t.g.l.\ formulation therefore leaves it in this finite-truncation form and proceeds directly to the generalized limit.

\subsection*{C.2 Generalized Limit}

Taking $T \to \infty$, the truncated integral tends to the classical Fresnel integral
\[
\int_{-\infty}^{\infty} e^{iau^2}\, du = \sqrt{\frac{\pi}{a}}\, e^{i\pi/4}, \qquad a>0.
\]
Therefore
\[
F(\omega) = \sqrt{\frac{\pi}{a}}\, e^{i\pi/4}\, e^{-i\omega^2/4a},
\]
which agrees with the classical generalized Fourier transform. Setting $a=\pi$ recovers $F(\omega) = e^{i\pi/4}\, e^{-i\omega^2/4\pi}$, the result stated in Section~5.3.

\subsection*{C.3 Inverse Transform}

The inverse t.g.l.\ transform is
\[
f(t) = \lim_{T\to\infty} \lim_{\Omega\to\infty} \frac{1}{2\pi} \int_{-\Omega}^{\Omega} F_T(\omega)\, e^{i\omega t}\, d\omega.
\]
Substituting $F_T(\omega) = \int_{-T}^{T} e^{iax^2}\, e^{-i\omega x}\, dx$ gives
\[
\frac{1}{2\pi} \int_{-\Omega}^{\Omega} \left( \int_{-T}^{T} e^{iax^2}\, e^{-i\omega x}\, dx \right) e^{i\omega t}\, d\omega.
\]
For finite $T$ and $\Omega$, the order of integration may be interchanged since both integrations are over bounded intervals:
\[
= \int_{-T}^{T} e^{iax^2} \left( \frac{1}{2\pi} \int_{-\Omega}^{\Omega} e^{i\omega(t-x)}\, d\omega \right) dx.
\]
The inner integral is the Dirichlet kernel $\sin\Omega(t-x)/\pi(t-x)$. Hence
\[
= \int_{-T}^{T} e^{iax^2}\, \frac{\sin\Omega(t-x)}{\pi(t-x)}\, dx.
\]
The prescribed order of limits, $\Omega \to \infty$ followed by $T \to \infty$, yields
\[
f(t) = e^{iat^2}.
\]

\subsection*{C.4 Direct Verification via the Untruncated Inverse Integral}

The inverse transform obtained in C.3 through dual truncation and ordered limits admits an independent check. Since $F(\omega) = \sqrt{\pi/a}\, e^{i\pi/4}\, e^{-i\omega^2/4a}$ is itself an oscillatory quadratic-phase function of the same type as $f(x)=e^{iax^2}$, the untruncated inverse integral converges directly by the same oscillatory-cancellation mechanism invoked in Section~5.3 for the forward transform, without truncation in either domain.

Compute
\[
\frac{1}{2\pi} \int_{-\infty}^{\infty} F(\omega)\, e^{i\omega t}\, d\omega = \frac{\sqrt{\pi/a}\, e^{i\pi/4}}{2\pi} \int_{-\infty}^{\infty} e^{-i\omega^2/4a}\, e^{i\omega t}\, d\omega.
\]
Complete the square in $\omega$:
\[
-\frac{\omega^2}{4a} + \omega t = -\frac{1}{4a}(\omega - 2at)^2 + at^2.
\]
Hence
\[
\frac{1}{2\pi} \int_{-\infty}^{\infty} F(\omega)\, e^{i\omega t}\, d\omega = \frac{\sqrt{\pi/a}\, e^{i\pi/4}\, e^{iat^2}}{2\pi} \int_{-\infty}^{\infty} e^{-i(\omega-2at)^2/4a}\, d\omega.
\]
Substituting $v = \omega - 2at$ and using the conjugate Fresnel integral
\[
\int_{-\infty}^{\infty} e^{-iv^2/4a}\, dv = \sqrt{4\pi a}\, e^{-i\pi/4},
\]
which follows from the Fresnel integral of C.2 by conjugation, gives
\[
\frac{1}{2\pi} \int_{-\infty}^{\infty} F(\omega)\, e^{i\omega t}\, d\omega = \frac{\sqrt{\pi/a}\cdot\sqrt{4\pi a}}{2\pi}\, e^{i\pi/4}\, e^{-i\pi/4}\, e^{iat^2} = e^{iat^2} = f(t).
\]

Thus $f(t)$ is recovered directly from $F(\omega)$ by ordinary oscillatory integration, without the dual truncation and ordered limits used in C.3. This second route is available precisely because the chirp signal is self-similar under the Fourier transform, up to a sign change and rescaling of the quadratic-phase coefficient; it does not indicate that the ordered-limit structure of C.3 is dispensable in general, since most admissible functions do not share this self-similarity. The two routes give the same result, as they must.

 First, the finite-truncation integral $F_T(\omega)$ is not evaluated in closed form; the method operates directly on the truncated Fresnel integral rather than requiring an elementary antiderivative. Second, the forward transform $F(\omega)$ already converges as an ordinary oscillatory limit in C.2, even though $f \notin L^1(\mathbb{R})$; no second-order generalized limit with an auxiliary pairing function (Theorem~3.2) is required to assign it meaning. Third, the inverse transform in C.3 requires truncation in both the time and frequency domains, with the ordered limits $\Omega \to \infty$ followed by $T \to \infty$ taken through the orthogonal localization of the Dirichlet kernel; this illustrates the central constructive philosophy of the t.g.l.\ formulation, and emphasizes that the generalized character of the reconstruction arises from these ordered limits themselves rather than from introducing generalized functionals.

The same result can also be reached by the classical distributional route, since $e^{iax^2} \in L^\infty(\mathbb{R}) \subset \mathcal{S}'(\mathbb{R})$ and the Fourier transform of a tempered distribution $T$ is defined by $\langle \widehat{T}, \varphi \rangle = \langle T, \widehat{\varphi}\rangle$ for $\varphi \in \mathcal{S}(\mathbb{R})$; unlike the derivation above, that route never evaluates the oscillatory integral $\int e^{iax^2}\, e^{-i\omega x}\, dx$ itself, only the transform of the resulting functional. The sequential approach reaches the same result differently again, by introducing an auxiliary damping or decaying sequence prior to taking ordinary Fourier transforms and passing to the limit. The derivation given here, by contrast, evaluates the truncated oscillatory integral directly and recovers generalized meaning solely through the ordered limits $T\to\infty$ and $\Omega\to\infty$.

\end{document}